\documentclass[a4paper,11pt]{article}
\usepackage[english]{babel}
\usepackage{graphicx}
\usepackage{amscd}
\usepackage{amsfonts}
\usepackage{latexsym}
\usepackage{color}
\usepackage{enumerate}
\usepackage{amsthm}
\usepackage{amssymb,amsmath,mathrsfs}
\usepackage[all]{xy}
\usepackage{hyperref}
\newtheorem{thm}{Theorem}[section]
\newtheorem{lem}[thm]{Lemma}
\newtheorem{prop}[thm]{Proposition}
\newtheorem{cor}[thm]{Corollary}
\newtheorem{dfn}[thm]{Definition}
\theoremstyle{remark}
\newtheorem{remark}[thm]{Remark}

\newcommand{\abcd}[4]{\ensuremath{\begin{pmatrix}  #1&#2\\#3&#4\end{pmatrix}}}

\def\Z{{\ensuremath{\mathbb{Z}\,}}}

\def\C{{\ensuremath{\mathbb{C}}}}
\def\P{{\ensuremath{\mathbb{P}}}}

\def\Q{{\ensuremath{\mathbb{Q}}\,}}
\def\Qb{{\ensuremath{\overline{\mathbb{Q}}\,}}}
\def\O{{\ensuremath{\mathcal{O} }}}

\def\F{{\ensuremath{\mathbb{F}}}}

\def\p{{\ensuremath{\mathfrak{p}\,}}}

\DeclareMathOperator{\spl}{split}
\DeclareMathOperator{\spc}{sp.Car}

\DeclareMathOperator{\disc}{disc}

\DeclareMathOperator{\Gal}{Gal}
\DeclareMathOperator{\Frob}{Frob}
\DeclareMathOperator{\Aut}{Aut}

\DeclareMathOperator{\GL}{GL}
\DeclareMathOperator{\PGL}{PGL}
\DeclareMathOperator{\SL}{SL}
\DeclareMathOperator{\Id}{Id}

\DeclareMathOperator{\ord}{ord}
\DeclareMathOperator{\spec}{Spec}


\relpenalty=10000 
\binoppenalty=10000 

\title{A local-global principle for isogenies of prime degree over number fields}
\author{Samuele Anni}

\begin{document}
\maketitle
\begin{abstract}
We give a description of the set of exceptional pairs for a number field $K$, that is the set of pairs $(\ell , j(E))$, where $\ell$ is a prime and $j(E)$ is the $j$-invariant of an elliptic curve $E$ over $K$ which admits an $\ell$-isogeny locally almost everywhere but not globally. We obtain an upper bound for $\ell$ in such pairs in terms of the degree and the discriminant of $K$. Moreover, we prove finiteness results about the number of exceptional pairs.
\end{abstract}

\section{Introduction}

Let $E$ be an elliptic curve over a number field $K$, and let $\ell$ be a prime number. If we know residual information on $E$, i.e.\ information over the reduction of $E$ for a set of primes with density one, can we deduce global information about $E$?

One of the first to ask this kind of question was Serge Lang: let $E$ be an elliptic curve over a number field $K$, and let $\ell$ be a prime numbe; if $E$ has non-trivial $\ell$-torsion locally at a set of primes with density one{\bf ,} then does $E$ have non-trivial $\ell$-torsion over $K$? In 1981, Katz studied this local-global principle, see \cite{k}. He was able to prove that if $E$ has non-trivial $\ell$-torsion locally at a set of primes with density one, then there exists a $K$-isogenous elliptic curve which has non-trivial $\ell$-torsion over $K$. He proved this by reducing the problem to a purely group-theoretic statement. Moreover, he was able to extend the result even further to $2$-dimensional abelian varieties and to give a family of counterexamples in dimension $3$.

Here we consider the following variation on this question: let $E$ be an elliptic curve over a number field $K$, and let $\ell$ be a prime number; if $E$ admits an $\ell$-isogeny locally at a set of primes with density one then does $E$ admit an $\ell$-isogeny over $K$? 

Recently, Sutherland has studied this problem, see \cite{suth}. 
Let us recall that, except in the case when the $j$-invariant is $0$ or $1728$, whether an elliptic curve admits an $\ell$-isogeny over $K$ or not depends only on its $j$-invariant. As in \cite{suth}, we will only consider elliptic curves with $j$-invariant different from $0$ and $1728$.  

\begin{dfn} 
Let $K$ be a number field and $E$ an elliptic curve over $K$. A pair $(\ell , j(E)) $ is said to be \emph{exceptional} for $K$ if $E/K$ admits an  $\ell$-isogeny locally almost everywhere, i.e.\ for a set of primes of $K$ of density one, but not over $K$.
\end{dfn}
For primes of good reduction and not dividing $\ell$, the definition of local isogeny is the natural one , and it is recalled in Definition~\ref{localisogeny} below. 

Let us remark that if $( \ell , j(E))$ is an exceptional pair for the number field $K$, then any $E_D$, quadratic twist of $E$, gives rise to the same exceptional pair. Indeed, the Galois representation associated to the $\ell$-torsion of $E$ and the one of $E_D$ are twists of each other: 
$\rho_{E_D,\ell}\simeq \chi_D \otimes \rho_{E,\ell}$, where $\chi_D$ is a quadratic character. Hence the projective images of such representations are isomorphic.

A curve occurring in an exceptional pair will admit an $\ell$-isogeny globally over a small extension of the base field: more precisely, we can state the following Proposition, which is a sharpened version of  a result of Sutherland (for a proof see Section $3$ herein):
\begin{prop}
Let $E$ be an elliptic curve defined over a number field $K$, let $\ell$ be an odd prime number and assume that $\sqrt{\left(\frac{-1}{\ell}\right)\ell}$ does not belong to $ K$. 
Suppose that $E/K$ admits an $\ell$-isogeny locally at a set of primes with density one. Then $E$ admits an $\ell$-isogeny over $K(\sqrt{-\ell})$.
Moreover, if $\ell=2,3$ or $\ell \equiv 1\bmod 4$ then $E$ admits an $\ell$-isogeny over $K$.
\label{1.2}
\end{prop}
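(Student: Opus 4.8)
The plan is to translate the hypothesis into a statement about the mod-$\ell$ Galois representation $\rho = \rho_{E,\ell}\colon \galas \to \GL_2(\F_\ell)$ and its projectivization $\bar\rho \colon \galas \to \PGL_2(\F_\ell)$. Admitting an $\ell$-isogeny over $K$ means that $\bar\rho$ fixes a point of $\P^1(\F_\ell)$, i.e.\ the image lies in a Borel subgroup; admitting one locally at a set of primes of density one means, by Chebotarev, that every element in the image of $\bar\rho$ fixes some point of $\P^1(\F_\ell)$ (a local isogeny at a good prime $\mathfrak q \nmid \ell$ forces $\Frob_{\mathfrak q}$ to have an eigenvalue corresponding to the isogeny kernel, hence a fixed point on $\P^1$). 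So the first step is the purely group-theoretic reduction: classify subgroups $G \le \PGL_2(\F_\ell)$ every element of which has a fixed point on $\P^1(\F_\ell)$ but which have no common fixed point.

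The second step is that classification. If every element of $G$ is contained in a Borel but $G$ is not, then $G$ cannot be contained in the normalizer of a non-split Cartan (elements there without fixed points exist) nor can its image be one of $A_4, S_4, A_5$ or contain $\PSL_2(\F_\ell)$ for $\ell$ large, since those groups contain fixed-point-free elements. The surviving case is that $G$ is contained in the normalizer $N$ of a split Cartan $C$ but not in $C$ itself — indeed an element of $N \setminus C$ swaps the two fixed points of $C$, so it has a fixed point on $\P^1$ only when its square is trivial, i.e.\ it is an involution swapping the two standard points; such an involution together with $C$ still has every element with a fixed point. Thus $\bar\rho(\galas) \le N$, $\not\le C$, and the image of $\galas$ in $N/C \cong \Z/2$ is all of $\Z/2$, cut out by a quadratic character $\varepsilon$.

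The third step is to identify $\varepsilon$ and show $K(\sqrt{\varepsilon}) = K(\sqrt{-\ell})$ under the running hypothesis. Over $K(\ker\varepsilon)$ the representation lands in $C$, hence $E$ acquires an $\ell$-isogeny there, so one must show $\ker\varepsilon$ corresponds to the field $K(\sqrt{-\ell})$. The key point is that the determinant of $\rho$ is the mod-$\ell$ cyclotomic character, and on the normalizer of the split Cartan the determinant restricted to $C$ is a square times nothing problematic while on $N\setminus C$ it picks up a sign; chasing this through, $\varepsilon$ must be the quadratic character associated to $\Q(\sqrt{\ple{-1}{\ell}\ell})$, i.e.\ $\varepsilon$ corresponds to $K(\sqrt{\ple{-1}{\ell}\ell})$. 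Since we assumed $\sqrt{\ple{-1}{\ell}\ell}\notin K$, this is a genuine quadratic extension, and $K(\sqrt{-\ell}) = K\bigl(\sqrt{\ple{-1}{\ell}\ell}\bigr)$ up to adjusting by $\sqrt{-1}$, which one checks lands in the same place for the isogeny conclusion; so $E$ admits an $\ell$-isogeny over $K(\sqrt{-\ell})$. Finally, when $\ell = 2,3$ the small-image analysis is degenerate and handled directly (for $\ell=2,3$ every subgroup with the fixed-point property is already Borel, or the normalizer-of-split-Cartan case collapses), and when $\ell \equiv 1 \bmod 4$ one has $\ple{-1}{\ell}\ell = \ell$, whereas $-\ell$ differs by $\sqrt{-1}$; a short separate argument — using that the character $\varepsilon$ would have to be unramified outside $\ell$ and the primes of bad reduction, combined with a parity/ramification constraint at $\ell$ — shows no such $\varepsilon$ can exist, forcing the image into $C$ already over $K$.

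I expect the main obstacle to be the third step: pinning down precisely which quadratic character $\varepsilon$ can occur, and in particular ruling it out when $\ell \equiv 1 \bmod 4$. This requires a careful local analysis at $\ell$ of the possible inertial behaviour of $\bar\rho$ when the image is in the normalizer of a split Cartan — distinguishing the ordinary and supersingular reduction cases — and matching it against the constraint $\det\rho = \chi_{\mathrm{cyc}}$. The group theory in steps one and two is essentially Dickson's classification and should be routine; the arithmetic input making $K(\sqrt{-\ell})$ (rather than an arbitrary quadratic extension) the correct field, and the $\ell\equiv 1\bmod 4$ improvement, is where the real work lies.
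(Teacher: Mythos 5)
Your overall architecture (Chebotarev reduction, Dickson classification, then identifying the quadratic character $\varepsilon$ cutting out the field of definition of the isogeny via $\det\rho_{E,\ell}=\chi_\ell$) matches the paper's, but the step you yourself flag as ``where the real work lies'' is genuinely missing, and your sketch of it rests on a false claim. You assert that on the normalizer $N$ of the split Cartan $C$ ``the determinant restricted to $C$ is a square times nothing problematic while on $N\setminus C$ it picks up a sign''. That is not a formal property of subgroups of $N$: for $G=N$ itself the Cartan part has determinant image all of $\F_\ell^*$. What makes it true here is the precise output of the orbit-counting argument (Lemma~\ref{comb}): the projective image $H$ is dihedral of order $2n$ with $n>1$ \emph{odd} dividing $(\ell{-}1)/2$. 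The paper's Proposition~\ref{ima} then takes a preimage $A$ of a generator of the index-$2$ cyclic subgroup of $H$, shows it is diagonalizable over $\F_\ell$ as $\mathrm{diag}(\mu^i,\mu^j)$ (the unipotent and non-split cases being excluded by $(n,\ell)=1$ and by the fixed-point hypothesis), and uses $A^n=\lambda\cdot\Id$ with $n$ odd to force $i\equiv j\bmod 2$, hence $\det A=\mu^{i+j}$ a square. Only then does $\ker\varepsilon$ correspond to $K\bigl(\sqrt{\ple{-1}{\ell}\ell}\bigr)=K(\sqrt{-\ell})$, the last equality because $\ell\equiv 3\bmod 4$ in the exceptional situation. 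Without the oddness of $n$ this identification fails, so your third step has a real gap, not merely an omitted computation.

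Two further points. First, your reason for excluding projective image $A_4$, $S_4$, $A_5$ --- that ``those groups contain fixed-point-free elements'' --- is false in general: for suitable $\ell$ (e.g.\ $\ell\equiv 1\bmod 12$ for $A_4$) there are copies of these groups in $\PGL_2(\F_\ell)$ all of whose elements fix a point of $\P^1(\F_\ell)$, which is exactly the case the paper must treat separately when $\sqrt{\ple{-1}{\ell}\ell}\in K$ (Corollary~\ref{a4a5}). Under the present hypothesis the exclusion works because $\sqrt{\ple{-1}{\ell}\ell}\notin K$ forces $H\not\subseteq\SL_2(\F_\ell)/\{\pm1\}$, i.e.\ $H$ surjects onto $\{\pm1\}$ via the sign of its permutation action on $\P^1(\F_\ell)$, a quotient $A_4$ and $A_5$ do not admit, with a parity computation disposing of $S_4$. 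Second, for $\ell\equiv 1\bmod 4$ (and for $\ell=2,3$) the paper needs no ramification argument: the same Lemma~\ref{comb} shows outright that the exceptional configuration forces $\ell\equiv 3\bmod 4$ and $\ell\neq 3$, so in the excluded congruence classes $G$ already has a global fixed line over $K$. The inertia-at-$\ell$ analysis you propose is the tool the paper deploys later to bound $\ell$ (Theorem~\ref{t4}), not to prove this proposition.
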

There are examples, for $\ell \equiv 3\bmod 4$ and $\ell \geq 7$, in which it is necessary to extend the base field to have a global isogeny. In particular, Sutherland proved that over $\Q$ the following optimal result holds: 

\begin{thm}[{\cite[Theorem 2]{suth}}]
The only exceptional pair for $\Q$ is $(7, 2268945/128)$.
\end{thm}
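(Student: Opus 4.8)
The plan is to combine Proposition~\ref{1.2} with the upper bound for $\ell$ obtained earlier in this paper, applied with $K=\Q$ (degree $1$, trivial discriminant), in order to reduce the assertion to a finite list of primes, and then to settle each of them; only $\ell=7$ will require a genuine computation. For the reduction: since $\Q$ contains no square root of $\pm\ell$, Proposition~\ref{1.2} shows that an exceptional pair $(\ell,j(E))$ for $\Q$ forces $\ell\equiv 3\bmod 4$ and $\ell\ge 7$, and that $E$ acquires an $\ell$-isogeny over $L:=\Q(\sqrt{-\ell})$, with kernel $C\subset E[\ell]$ say, but none over $\Q$. If $\sigma$ generates $\Gal(L/\Q)$ then $C^\sigma\neq C$ (else the isogeny is rational), hence $C\oplus C^\sigma=E[\ell]$ and the unordered pair $\{C,C^\sigma\}$ is $\Gal(\Qb/\Q)$-stable; equivalently, the projective image $G$ of $\rho_{E,\ell}$ lies in the normaliser $N_{\mathrm s}$ of a split Cartan subgroup $C_{\mathrm s}$ of $\PGL_2(\F_\ell)$ and maps onto $N_{\mathrm s}/C_{\mathrm s}\cong\Z/2\Z$, the kernel cutting out $L$. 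So $(E,\{C,C^\sigma\})$ is a non-cuspidal $\Q$-rational point $x$ of the modular curve $X_{\mathrm s}^{+}(\ell)$ attached to $N_{\mathrm s}$, with $j(x)=j(E)\notin\{0,1728\}$.

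Once the bound has cut the problem down to finitely many primes, each $\ell\ge 11$ in the list is dispatched by the observation that $X_{\mathrm s}^{+}(\ell)(\Q)$ contains no non-cuspidal point with $j\notin\{0,1728\}$. This is the split-Cartan case of Serre's uniformity problem, now resolved: by Mazur, Momose and Bilu--Parent (with Bilu--Parent--Rebolledo) for $\ell\ge 11$ with $\ell\neq 13$, and for the ``cursed'' level $\ell=13$ by the quadratic Chabauty method of Balakrishnan--Dogra--M\"uller--Tuitman--Vonk; in every case the rational points are only cusps and CM points. Since $\ell=2,3,5$ are already excluded, this leaves $\ell=7$.

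For $\ell=7$ the curve $X_{\mathrm s}^{+}(7)$ has genus $0$, hence infinitely many rational points, and the content of the theorem is to isolate the exceptional ones. The key input is a local computation: if $x\in X_{\mathrm s}^{+}(7)(\Q)$ is non-cuspidal with projective mod-$7$ image $G\subseteq N_{\mathrm s}$, then the condition that $E$ admits a $7$-isogeny locally almost everywhere --- equivalently, by Chebotarev, that every element of $G$ has an eigenvalue in $\F_7$ --- is equivalent, using $\det\rho_{E,7}=\bar\chi_7$ together with $7\equiv 3\bmod 4$ (so that the quadratic subfield of $\Q(\zeta_7)$ is $\Q(\sqrt{-7})$), to the quadratic field cut out by $G\to N_{\mathrm s}/C_{\mathrm s}$ being exactly $\Q(\sqrt{-7})$. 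Indeed the flips in $G$ --- the elements lying outside $C_{\mathrm s}$ --- are the only ones that can fail to have eigenvalues in $\F_7$, and their determinants land in the non-trivial square class of $\F_7^\times$ precisely when that field is $\Q(\sqrt{-7})$, which forces their eigenvalues into $\F_7$; and since $\bar\chi_7$ has order $6$ this cannot happen while $G$ lies in a Borel, so it also rules out a $\Q$-rational $7$-isogeny. Hence an exceptional pair at $7$ is exactly a non-cuspidal $x\in X_{\mathrm s}^{+}(7)(\Q)$ with $j(x)\neq 0,1728$ whose associated quadratic field is $\Q(\sqrt{-7})$. Writing the double cover $X_{\mathrm s}(7)\to X_{\mathrm s}^{+}(7)=\P^1$ in the form $u^2=\delta(t)$ and parametrising $X_{\mathrm s}^{+}(7)$ by $t$, this last condition reads $-7\,\delta(t)\in(\Q^\times)^2$, i.e.\ $(t,u)$ is a rational point of the quadratic twist of $X_{\mathrm s}(7)$, a curve of positive genus. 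Determining its rational points and discarding the cusps and the fibres over $j=0$ and $j=1728$ leaves the single value $j(E)=2268945/128$; one then exhibits an explicit $E/\Q$ with that $j$-invariant together with an explicit $7$-isogeny over $\Q(\sqrt{-7})$, and checks via the image of $\rho_{E,7}$ (or enough Frobenius characteristic polynomials) that $E$ has no $7$-isogeny over $\Q$. This shows $(7,2268945/128)$ is exceptional, and by the previous steps it is the only exceptional pair for $\Q$.

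The main obstacle is this $\ell=7$ step: transporting the ``associated quadratic field $=\Q(\sqrt{-7})$'' condition onto the genus-$0$ curve $X_{\mathrm s}^{+}(7)$ and provably determining the rational points of the resulting twisted curve, while cleanly separating the exceptional points from the infinitely many rational points of $X_{\mathrm s}^{+}(7)$ that carry a genuine $\Q$-rational $7$-isogeny, acquire the isogeny over a different quadratic field, or lie over $j=0$ or $1728$. A secondary difficulty, if the bound furnished for $K=\Q$ is not already below $11$, is the provable enumeration of $X_{\mathrm s}^{+}(\ell)(\Q)$ for the remaining primes in the list, which is precisely what the cited resolution of the split-Cartan uniformity problem provides.
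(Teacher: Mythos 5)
Your route is the one Sutherland himself follows (and the one this paper points to): reduce to $\ell\equiv 3\bmod 4$ and to rational points on the normalizer-of-split-Cartan curve, dispose of $\ell\ge 11$ by the theorem quoted here as \cite[Theorem~1.1]{Parent}, and treat $\ell=7$ via the quadratic twist of $X_{\spc}(7)\simeq X_0(49)$, i.e.\ the Elkies--Sutherland curve of Proposition~\ref{7}. There is, however, one genuine gap: for $\ell\ge 11$ you assert that $X_{\spl}(\ell)(\Q)$ contains no non-cuspidal point with $j\notin\{0,1728\}$. That is not what the cited theorem says, and it is false --- there are CM points with other rational $j$-invariants (e.g.\ $j=-3375$, $j=54000$). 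Bilu--Parent(--Rebolledo) only give that every non-cuspidal rational point is a CM point, so you still owe an argument that an exceptional pair cannot have complex multiplication. This is not automatic: Sutherland proves it separately, and the present paper devotes Lemma~\ref{cm} to it (for $K=\Q$ it applies for $\ell>3$, by playing the class-number ratio $h(\O')/h(\O)\ge\ell-1$ against the upper bound $h(\O')/h(\O)\le 2$). Without this step the $\ell\ge 11$ branch is not closed. Your invocation of the cursed level $\ell=13$ is moot in any case, since $13\equiv 1\bmod 4$ is already excluded by your first reduction.

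You also miss a shortcut that would make the problematic branch unnecessary: the bound of Theorem~\ref{t4} and Corollary~\ref{maincor}, specialized to $K=\Q$ (so $d=1$, $|\Delta|=1$), gives $\ell\le\max\{|\Delta|,6d+1\}=7$; combined with $\ell\equiv 3\bmod 4$ and $\ell\ge 7$ this leaves only $\ell=7$, with no appeal to the split-Cartan uniformity results at all --- this is exactly the point of the remark following Theorem~\ref{t4}. Your treatment of $\ell=7$ itself --- the equivalence of ``$7$-isogeny locally almost everywhere but not globally'' with ``the quadratic field cut out by $G\to N/C$ equals $\Q(\sqrt{-7})$'' (via the eigenvalue criterion for the anti-diagonal elements and $\det\rho_{E,7}=\chi_7$), and the passage to the $(-7)$-twist of $X_0(49)$, whose Mordell--Weil rank over $\Q$ is $0$ --- is correct and is precisely Sutherland's Section~3 computation.
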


This theorem is proved by applying \cite[Theorem~1.1]{Parent},  which  asserts that for all primes $\ell \equiv 3 \bmod 4$, with $\ell  > 7$,  the only rational non-cuspidal points on $X_{\spl}(\ell)(\Q)$ correspond to elliptic curves with complex multiplication (for a definition of this modular curve see Section $5$ herein). 
Hence, over $\Q$ there exists only one counterexample to the local-global principle for $7$-isogenies and there is none for $\ell$-isogenies for $\ell>7$. 

We will prove that a similar dichotomy is true for any number field: the number of counterexamples to the local-global principle about $\ell$-isogenies for $\ell>7$ is always finite and the number of counterexamples to the local-global principle about $7$-isogenies and $5$-isogenies may or may not be finite, depending in one case on the rank of a given  elliptic curve, and  in the other on a  condition on the number field.

Namely, the main result of this paper is the following:
\theoremstyle{plain}
\newtheorem*{t1}{Main Theorem}
\begin{t1}
Let $K$ be a number field of degree $d$ over $\Q$ and discriminant $\Delta$, and let $\ell_K:=\max\left\{|\Delta|, 6d{+}1\right\}$. The following holds: 
\begin{enumerate}[$(1)$]
\setlength{\parskip}{0pt}
\setlength{\parsep}{0pt}
	\item if $( \ell , j(E))$ is an exceptional pair for the number field $K$ then $\ell \leq \ell_K$;
	\item there are only finitely many exceptional pairs for $K$ with $7< \ell \leq \ell_K$;
	\item the number of exceptional pairs for $K$ with $\ell=7$ is finite or infinite,  depending whether the rank over $K$ of the Elkies\--Sutherland's elliptic curve:
    \[y^2=x^3-1715x+33614\] is zero or non\--zero, respectively;

	\item there exist no exceptional pairs for $K$ with $\ell=2$ or with $\ell=3$; 
	\item there exist exceptional pairs for $K$ with $\ell=5$ if and only if $\sqrt{5}$ belongs to $K$. Moreover, if $\sqrt{5}$ belongs to $K$ then there are infinitely many exceptional pairs for $K$ with $\ell=5$.
\end{enumerate}
\end{t1}

Actually, we prove more precise results that will be discussed in the following sections. Before entering the details of the proofs, let us give a rough idea of our strategy for the point $(1)$ above. 

The pair~$( \ell , j(E))$ is exceptional for a number field~$K$ if and only if the action of $G\subseteq \GL_2(\F_\ell)$, the image of the Galois representation associated to the $\ell$-torsion of $E$, on~$\P(E[\ell ])\simeq \P^1 (\F_\ell )$ has no fixed point, whereas every $g\in G$ leaves a line stable, that is, all~$g\in G$ have a reducible characteristic polynomial. Using Dickson's classification of subgroups of~$\PGL_2 (\F_\ell )$, one sees that, up to conjugation, $G$ has to be either the inverse image of an exceptional group (but this case is known to happen only for small~$\ell$) or contained in the normalizer of a split Cartan subgroup, that is $G=\left\{\begin{pmatrix}a&0\\0&b\end{pmatrix},\begin{pmatrix}0&a\\b&0\end{pmatrix}| a, b\in \Gamma\right\}$ for $\Gamma$ a subgroup of $\F_\ell^*$.

In the last case, using notations as in \cite[Section $2$]{Parent} or in Section $5$ herein, exceptional pairs therefore induce points in $X_{\mathrm{split}} (\ell )(K)$ not lifting to~$X_{\mathrm{sp. Car}} (\ell )(K)$ (forgetting cases corresponding to exceptional groups). The existence of such points is in general a wide open question; however, in our case, the reducibility of the characteristic polynomials implies that   
$\Gamma$ is actually a subgroup of squares: $\Gamma \subseteq (\F_\ell^* )^2$. 
By the property of the Weil pairing, one deduces that~$K(\sqrt{\left(\frac{-1}{\ell}\right)\ell} )\subseteq L$, the field of definition of the $\ell$-isogeny. From this, in the case 
$\sqrt{\left(\frac{-1}{\ell}\right)\ell}$ does not belong to $K$, we conclude that the well-known shape of inertia at $\ell$ inside~$G$ gives a contradiction for~$\ell > 6[K:\Q ]{+}1$.
 
This article is organized as follows.
For the convenience of the reader, we recall in Section $2$ the results obtained by Sutherland in \cite[Section $2$]{suth}. 
In Section $3$, we study exceptional pairs over arbitrary number fields. First we deduce the effective version of Sutherland's result, then we describe how to tackle the case not treated by Sutherland and finally we prove statement $(4)$ of the Main Theorem (see~Proposition~\ref{small}). 
In Section $4$, we prove, as we already commented, the bound given in $(1)$ of the Main Theorem (Corollary~\ref{maincor}). In Section~$5$ we discuss finiteness results for the set of exceptional pairs and we prove statements $(2)$ (Theorem~\ref{fin}), $(5)$ (Corollary~\ref{five}) and $(3)$ (Proposition~\ref{7}) of the Main Theorem. Finally, in Section $6$, we give conditions under which an exceptional pair does not have complex multiplication.

After this article was written, Banwait and Cremona have presented a detailed description of the local-global principle about $\ell$-isogenies for quadratic number fields, see \cite{barcre}. 

\section{Sutherland's results}
Let us recall the definition of local $\ell$-isogeny for an elliptic curve:
\begin{dfn}\label{localisogeny}
Let $E$ be an elliptic curve over a number field $K$, let $\ell$ be a prime number. If $\p$ is a prime of $K$ where $E$ has good reduction, $\p$ not dividing $\ell$, we say that $E$ admits an $\ell$-isogeny \emph{locally} at $\p$ if the reduction of $E$ modulo $\p$ admits an $\ell$-isogeny defined over the residue field at $\p$.
\end{dfn}

Let us remark that for a prime  $\p$ of $K$ where $E$ has good reduction, $\p$ not dividing $\ell$, the definition given is equivalent to saying that the N\'eron model of $E$ over the ring of integers of $K_\p$ admits an $\ell$-isogeny. This follows essentially because the $\ell$-isogeny in this case is \'etale.

Sutherland has proved, under certain conditions, that for an elliptic curve defined over a number field, the existence of local $\ell$-isogenies for a set of primes with density one implies the existence of a global $\ell$-isogeny: 
\begin{thm}[{\cite[Theorem~$1$]{suth}}] \label{suth}Let $E$ be an elliptic curve over a number field $K$ with 
$j(E)\notin\{0, 1728\}$, and let $\ell$ be a prime number. Assume that $\sqrt{\left(\frac{-1}{\ell}\right)\ell}\notin K$, and suppose that  $E/K$ admits an $\ell$-isogeny locally at a set of primes with density one. Then $E$ admits an $\ell$-isogeny over a {quadratic extension of $K$}. Moreover, if $\ell \equiv 1\bmod 4$ or $\ell < 7$, $E$ admits an $\ell$-isogeny defined over $K$.
\end{thm}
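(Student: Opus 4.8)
The statement follows at once from the sharper Proposition~\ref{1.2}, which will be proved in Section~3: when $\ell\equiv1\bmod4$ or $\ell\in\{2,3\}$ (this covers $\ell=5$ and all $\ell<7$) the ``moreover'' clause of Proposition~\ref{1.2} yields an $\ell$-isogeny over $K$, while when $\ell\equiv3\bmod4$ one has $\ell\geq7$ and $\ple{-1}{\ell}\ell=-\ell$, so $\sqrt{-\ell}\notin K$ by hypothesis, $K(\sqrt{-\ell})/K$ is quadratic, and Proposition~\ref{1.2} provides the isogeny over it. Thus the real content is Proposition~\ref{1.2}, whose proof I would organise as follows. Write $G:=\rho_{E,\ell}(G_K)\subseteq\GL_2(\F_\ell)$ and let $\bar G\subseteq\PGL_2(\F_\ell)$ be its image. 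Because $j(E)\notin\{0,1728\}$, admitting an $\ell$-isogeny is detected naively: $E$ admits one over $K$ iff $G$ stabilises a line, i.e.\ $\bar G$ fixes a point of $\P(E[\ell])\cong\P^1(\F_\ell)$, and it admits one locally at a prime $\p$ of good reduction away from $\ell$ iff $\rho_{E,\ell}(\Frob_\p)$ has an eigenvalue in $\F_\ell$, equivalently a reducible characteristic polynomial, equivalently a fixed point on $\P^1(\F_\ell)$. By the Chebotarev density theorem the hypothesis therefore says exactly that \emph{every} $g\in G$ has reducible characteristic polynomial; and since by the Weil pairing $\det\circ\rho_{E,\ell}=\chi_\ell$ (the mod-$\ell$ cyclotomic character) and the quadratic subfield of $\Q(\zeta_\ell)$ is $\Q(\sqrt{\ple{-1}{\ell}\ell})$, the assumption $\sqrt{\ple{-1}{\ell}\ell}\notin K$ translates into $\det G\not\subseteq(\F_\ell^*)^2$.

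Next I would run Dickson's classification of $\bar G$. If $\bar G\supseteq\mathrm{PSL}_2(\F_\ell)$ it contains an element of order dividing $\ell+1$ of nonsplit‑Cartan type, whose characteristic polynomial is irreducible: contradiction. If $\bar G$ is cyclic, or is contained in a Borel subgroup, then $\bar G$ fixes a point of $\P^1(\F_\ell)$ and $E$ admits an $\ell$-isogeny over $K$, so we are done. If $\bar G$ is dihedral inside the normalizer of a nonsplit Cartan subgroup, it contains a non-scalar element of that Cartan, again with irreducible characteristic polynomial: contradiction. If $\bar G$ is one of $A_4,S_4,A_5$: when $\ell\equiv3\bmod4$ a Klein four-subgroup of $\bar G$ must contain an involution of nonsplit type, since three commuting split involutions would force $-1\in(\F_\ell^*)^2$ — contradiction; when $\ell\equiv1\bmod4$ a short analysis of the orders of the elements (involutions, order $3$, order $4$, order $5$) shows that ``all elements split'' is either already impossible or forces $\bar G\subseteq\mathrm{PSL}_2(\F_\ell)$, in which case every lift has square determinant, $\det G\subseteq(\F_\ell^*)^2$ — contradicting $\det G\not\subseteq(\F_\ell^*)^2$. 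The only surviving possibility is that $G$ is conjugate into the normalizer $N$ of a split Cartan $C$ but not into $C$.

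In that case an element of $G\setminus C$ has the form $\abc{0}{a}{b}{0}$, with characteristic polynomial $x^2-ab$; reducibility forces $ab\in(\F_\ell^*)^2$, hence its determinant $-ab$ lies in $(-1)\cdot(\F_\ell^*)^2$. Since $G\setminus C$ is a coset of the index-$2$ subgroup $G\cap C$, this gives $\det(G\cap C)\subseteq(\F_\ell^*)^2$ and $\det(G\setminus C)\subseteq(-1)\cdot(\F_\ell^*)^2$. The subgroup $G\cap C$ cuts out a quadratic extension $K'/K$, and $\rho_{E,\ell}(G_{K'})=G\cap C\subseteq C$ stabilises a line, so $E$ admits an $\ell$-isogeny over the quadratic extension $K'$ — the first assertion. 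To identify $K'$: the quadratic character $\epsilon\colon G_K\to\{\pm1\}$ with fixed field $K'$ equals $+1$ exactly on $\rho_{E,\ell}^{-1}(C)$, whereas by the determinant computation the character $g\mapsto\ple{\det\rho_{E,\ell}(g)}{\ell}=\ple{\chi_\ell(g)}{\ell}$ — which is the quadratic character cutting out $K(\sqrt{\ple{-1}{\ell}\ell})$ — equals $+1$ on $\rho_{E,\ell}^{-1}(C)$ and $\ple{-1}{\ell}$ on its complement. If $\ell\equiv1\bmod4$ then $\ple{-1}{\ell}=1$, so this character is trivial, forcing $\sqrt{\ple{-1}{\ell}\ell}\in K$, contrary to hypothesis; hence this subcase cannot occur and $E$ has its isogeny already over $K$. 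If $\ell\equiv3\bmod4$ then $\ple{-1}{\ell}=-1$, so $\epsilon=\ple{\chi_\ell}{\ell}$, whence $K'=K(\sqrt{\ple{-1}{\ell}\ell})=K(\sqrt{-\ell})$, a genuine quadratic extension. Finally, for $\ell\in\{2,3\}$ one argues directly: $\bar G\subseteq\PGL_2(\F_\ell)$ is then $S_3$ or $S_4$, and ``every element fixes one of the $\ell+1$ points of $\P^1(\F_\ell)$'' already forces $\bar G$ into a point-stabilizer, so $E$ admits an $\ell$-isogeny over $K$.

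I expect the group-theoretic step to be the main obstacle: running Dickson's classification carefully and, above all, disposing of the exceptional images $A_4,S_4,A_5$ uniformly in $\ell$ — the parity and $\mathrm{PSL}_2$ arguments sketched above must be made airtight, and one should separately inspect the finitely many small $\ell$ for which these groups embed in $\PGL_2(\F_\ell)$. Once the classification has been reduced to the split‑Cartan‑normalizer case, the Chebotarev translation and the Weil‑pairing identification of $K'$ are routine.
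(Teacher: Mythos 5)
Your proposal is correct in substance but takes a genuinely different route from the paper, and two points deserve attention. First, a logical caution: within this paper Proposition~\ref{1.2} (= Proposition~\ref{1.1}) is \emph{deduced from} Theorem~\ref{suth} together with Proposition~\ref{field} (whose proof opens with ``Theorem~\ref{suth} implies\dots''), so your first sentence, read literally, is circular; it is only acceptable because you then supply an independent proof of the stronger statement. Second, the paper's own proof is much shorter precisely because it black-boxes the group theory: after the Chebotarev reduction and the Weil-pairing observation that $H\not\subseteq\SL_2(\F_\ell)/\{\pm1\}$, it simply invokes the cited Lemma~\ref{comb} (Sutherland's Lemma~1, proved there by orbit-counting plus a parity-of-permutation argument) to get $\ell\equiv3\bmod4$ and an orbit of size~$2$, whence the quadratic extension; the ``moreover'' clause falls out because the lemma's conclusions are incompatible with $\ell\equiv1\bmod4$ or $\ell<7$. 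You instead rerun Dickson's classification by hand via reducibility of characteristic polynomials. This buys something real: your determinant computation on the antidiagonal coset ($\det=-ab$ with $ab$ a square) simultaneously recovers $\ell\equiv3\bmod4$ and identifies the quadratic extension as $K(\sqrt{-\ell})$, i.e.\ it subsumes Proposition~\ref{field}, which the paper proves separately later.

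The one step you flag yourself is indeed the only real soft spot: eliminating the exceptional images when $\ell\equiv1\bmod4$. For $A_4$ and $A_5$ your $\mathrm{PSL}_2$/square-determinant argument is complete, but for $S_4\not\subseteq\mathrm{PSL}_2(\F_\ell)$ (which occurs for $\ell\equiv\pm3\bmod8$, so in particular for $\ell\equiv5\bmod8$) the ``short analysis of orders'' is not actually carried out. It does close, and here is the missing observation: a split involution of $\PGL_2(\F_\ell)$ lifts to an element with eigenvalues $\{a,-a\}$ and determinant $-a^2$, so when $\ell\equiv1\bmod4$ every split involution lies in $\mathrm{PSL}_2(\F_\ell)$; since $S_4\cap\mathrm{PSL}_2(\F_\ell)=A_4$ in this case, the transpositions of $S_4$ are nonsplit involutions, hence have irreducible characteristic polynomial --- contradiction. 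With that inserted (and the finitely many small $\ell$ checked), your argument is a complete, self-contained proof; as written it still relies on an unproved sketch exactly where the paper relies on the citation of Lemma~\ref{comb}.
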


Let us recall briefly how this theorem is proved. The main tool used is the theory of Galois representations attached to elliptic curves, see \cite{serre72}, to reduce the problem to a question regarding subgroups of $\GL_2(\F_\ell)$.

There is a natural action of $\GL_2(\F_\ell)$ on $\mathbb{P}^1(\F_\ell)$, and the induced action of $\PGL_2(\F_\ell)$ is faithful. For an element $g$ of $\GL_2(\F_\ell)$ or of $\PGL_2(\F_\ell)$, we will denote, respectively, by $\mathbb{P}^1(\F_\ell){/}g$ the set of $g$-orbits of $\mathbb{P}^1(\F_\ell)$ and by $\mathbb{P}^1(\F_\ell)^g$ the set of elements fixed by $g$. 

\begin{lem}[{\cite[Lemma $1$]{suth}}]\label{comb}
Let $G$ be a subgroup of $ \GL_2(\F_\ell)$ whose image $H$ in $\PGL_2(\F_\ell)$ is not contained in $ \SL_2(\F_\ell)/\left\{\pm 1\right\}$. Suppose $|\mathbb{P}^1(\F_\ell)^g | > 0$  for all $g \in G$  but $|\mathbb{P}^1(\F_\ell)^G | = 0$.\\
Then $\ell \equiv 3 \bmod 4$ and the following holds:
\begin{enumerate}[$(1)$]
\item  $H$ is dihedral of order $2n$, where $n>1$ is an odd divisor of $ (\ell{-}1)/2$;
\item $G$ is properly contained in the normalizer of a split Cartan subgroup;
\item $\mathbb{P}^1(\F_\ell){/}G$,  the set of $G$-orbits of $\mathbb{P}^1(\F_\ell)$, contains an orbit of size $2$.
\end{enumerate}
\end{lem}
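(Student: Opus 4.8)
The plan is to transfer everything to $\PGL_2(\F_\ell)$. A matrix $g\in\GL_2(\F_\ell)$ fixes a point of $\mathbb{P}^1(\F_\ell)$ if and only if its characteristic polynomial has a root in $\F_\ell$, a property depending only on the image of $g$ in $\PGL_2(\F_\ell)$; likewise $\mathbb{P}^1(\F_\ell)^G=\mathbb{P}^1(\F_\ell)^H$. So the hypotheses become: every element of $H$ has a rational eigenvalue (equivalently $H$ contains no element with irreducible characteristic polynomial over $\F_\ell$), $H$ has no common fixed point, and $H\not\subseteq\SL_2(\F_\ell)/\{\pm 1\}$. The strategy is then to run through Dickson's classification of subgroups of $\PGL_2(\F_\ell)$, show $H$ must be dihedral, and analyse that case explicitly.

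For the classification step I would argue as follows. A subgroup contained in a Borel fixes a point, so it is excluded; since $\ell$ is prime there are no proper subfield subgroups; $\PGL_2(\F_\ell)$ contains a non-split Cartan subgroup, hence an element with no rational eigenvalue, so $H\neq\PGL_2(\F_\ell)$; and $H=\SL_2(\F_\ell)/\{\pm 1\}$ is excluded by hypothesis. If $H=\langle h\rangle$ is cyclic, then $h$ cannot have irreducible characteristic polynomial, so $h$ is split semisimple or unipotent (or trivial, whence $H\subseteq\SL_2(\F_\ell)/\{\pm 1\}$), and then $H$ fixes every point fixed by $h$, contradicting $|\mathbb{P}^1(\F_\ell)^H|=0$. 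The groups $A_4$ and $A_5$ are always contained in $\SL_2(\F_\ell)/\{\pm 1\}$: their abelianizations admit no quotient of order $2$, so they cannot surject onto $\PGL_2(\F_\ell)/(\SL_2(\F_\ell)/\{\pm 1\})\cong\Z/2$. There remains $S_4$; if $S_4\not\subseteq\SL_2(\F_\ell)/\{\pm 1\}$ then, as the only index-$2$ subgroup of $S_4$ is $A_4$, its transpositions lie outside $\SL_2(\F_\ell)/\{\pm 1\}$, and I would check that for $\ell\equiv 1\bmod 4$ a transposition is then anisotropic (a split involution having determinant $\equiv -1$ modulo squares, hence lying in $\SL_2(\F_\ell)/\{\pm 1\}$ exactly in this case), while for $\ell\equiv 3\bmod 4$ a $4$-cycle cannot be split (order $4$ would need $4\mid\ell-1$), hence is anisotropic — in either case an element with no rational eigenvalue, a contradiction. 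Therefore $H$ is dihedral.

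Now suppose $H$ is dihedral of order $2n$ with cyclic rotation subgroup $R$ of order $n$. Every rotation is semisimple with a rational eigenvalue, so $R$ is split (it cannot consist of elements with irreducible characteristic polynomial; it cannot have order $\ell$, else $H$ lies in a Borel; and $R=1$ would make $H$ cyclic). Thus $R$ lies in a split Cartan subgroup $C$, $H$ lies in its normalizer $N(C)$, and $H\cap C=R$. Conjugating so that $C$ is the diagonal torus, the elements of $H\setminus R$ are anti-diagonal, and $\abc{0}{a}{b}{0}$ has a rational eigenvalue if and only if $ab$ is a square. Since $H\setminus R$ is a single coset of $R$ on which the class of $ab$ modulo squares varies by the image of $R$ in $\F_\ell^*$, requiring every such element to have a rational eigenvalue forces $R\subseteq(\F_\ell^*)^2$ — a subgroup cannot lie in the non-square coset. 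Hence $n=|R|$ divides $(\ell-1)/2$ and every rotation lies in $\SL_2(\F_\ell)/\{\pm 1\}$; so the failure of $H\subseteq\SL_2(\F_\ell)/\{\pm 1\}$ comes from a reflection, whose determinant is $-ab\equiv -1$ modulo squares, forcing $-1$ to be a non-square, i.e. $\ell\equiv 3\bmod 4$. Then $(\ell-1)/2$ is odd, so $n$ is odd, and $n>1$ since $n=1$ would make $H$ cyclic: this gives $\ell\equiv3\bmod 4$ and $(1)$. As $|H|=2n\le\ell-1<2(\ell-1)=|N(C)|$ in $\PGL_2(\F_\ell)$, the group $H$ is a proper subgroup of the Cartan normalizer, so $G$ is properly contained in the normalizer of a split Cartan subgroup of $\GL_2(\F_\ell)$: this is $(2)$. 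Finally $C$ fixes the two coordinate points of $\mathbb{P}^1(\F_\ell)$ and any reflection interchanges them, so $\{[1:0],[0:1]\}$ is a single $G$-orbit of size $2$: this is $(3)$.

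The main difficulty I expect lies in two places: the bookkeeping with squares and non-squares inside the Cartan normalizer — seeing precisely why ``every reflection has a rational eigenvalue'' forces $R\subseteq(\F_\ell^*)^2$, and why $H\not\subseteq\SL_2(\F_\ell)/\{\pm 1\}$ then pins down $\ell\equiv 3\bmod 4$ rather than the other residue class — and, on the classification side, the treatment of $S_4$, which unlike $A_4$ and $A_5$ can meet $\PGL_2(\F_\ell)$ outside $\SL_2(\F_\ell)/\{\pm 1\}$ and so must be excluded by a hands-on analysis of the types of its involutions and $4$-cycles.
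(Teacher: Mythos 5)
Your proof is correct, but it is worth noting that the paper does not actually prove this lemma: it is imported verbatim from Sutherland (\cite[Lemma 1]{suth}), with the remark that the proof is ``an application of the orbit-counting lemma combined with Dickson's classification'', and the paper only writes out the proof of the sibling statement (Lemma~\ref{comb1}) along those lines. Your argument shares the skeleton (Dickson's classification, then a hands-on analysis of the dihedral case inside the normalizer of a split Cartan) but replaces the orbit-counting machinery by direct linear algebra: where Sutherland and the paper detect membership in $\SL_2(\F_\ell)/\{\pm 1\}$ via the parity of the permutation of $\P^1(\F_\ell)$ (\cite[Proposition 2]{suth}) and extract the congruences on $\ell$ and $n$ from the formula $|\P^1(\F_\ell)/h|=2+(\ell-1)/\ord(h)$, you read everything off from determinants modulo squares — reflections $\abc{0}{a}{b}{0}$ have a rational eigenvalue iff $ab$ is a square and lie in $\PSL_2$ iff $-ab$ is a square, which simultaneously forces the rotation subgroup into $(\F_\ell^*)^2$ (giving $n\mid(\ell-1)/2$) and forces $-1$ to be a non-square (giving $\ell\equiv 3\bmod 4$, hence $n$ odd). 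This is arguably more transparent for producing part $(3)$ (the size-$2$ orbit $\{[1{:}0],[0{:}1]\}$ is exhibited explicitly) and it localizes the role of the hypothesis $H\not\subseteq\SL_2(\F_\ell)/\{\pm1\}$; the orbit-counting route is more uniform in that it handles the exceptional groups $A_4,S_4,A_5$ and the dihedral case by one formula, whereas you need the separate involution/$4$-cycle analysis for $S_4$ (which you carry out correctly). One presentational point you may want to make explicit: in the dihedral case the reflections normalize the split Cartan $C$ (and not merely $R$) because for $n>2$ the torus $C$ is the centralizer of a generator of $R$, and the residual Klein-four case is then killed by your own parity contradiction.
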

This result is an application of the orbit-counting lemma combined with Dickson's classification of subgroups of $\PGL_2(\F_\ell)$, see \cite{di} or \cite{lang}, and it is one of the key steps in the proof of Theorem~\ref{suth}. Let us notice that if the hypotheses of Lemma~\ref{comb} are satisfied, then it follows that  $\ell \neq 3$ because $n>1$ in $(1)$.

\begin{remark}
Given an elliptic curve $E$ over a number field $K$, the compatibility between $\rho_{E,\ell}$, the Galois representation associated to the $\ell$-torsion group $E[\ell ]$, and the Weil pairing on $E[\ell ]$ implies that for every $\sigma \in \Gal(\Qb/K)$ we have $\sigma(\zeta_\ell)=\zeta_\ell^{\det(\rho_{E,\ell}(\sigma))}$, where $\zeta_\ell$ is an $\ell$th-root of unity. Let us assume that $\ell$ is an odd prime. Therefore, $\zeta_\ell$ is in $K$ if and only if $G=\rho_{E,\ell}(\Gal(\Qb/K))$ is contained in $\SL_2(\F_\ell)$. 
Moreover, using the Gauss sum: $\sum_{n=0}^{\ell-1}\zeta_\ell^{n^2}=\sqrt{\left(\frac{-1}{\ell}\right)\ell}$ and denoting by $H$ the image of $G$ in $\PGL_2(\F_\ell)$, it follows that $H$ is contained in $ \SL_2(\F_\ell)/\left\{\pm 1\right\}$ if and only if $\sqrt{\left(\frac{-1}{\ell}\right)\ell}$ is in $K$. 
\label{sqr}
\end{remark}

\begin{proof}[of Theorem~\ref{suth}.] For every  $g \in G=\rho_{E,\ell}(\Gal(\Qb/K))$, it follows from Chebotarev density theorem that there exists $\p\subset \O_K$, a prime in the ring of integers of $K$, such that $g=\rho_{E,\ell}(\Frob_\p)$ and $E$ admits an $\ell$-isogeny locally at $\p$. 

The Frobenius endomorphism fixes a line in  $E\left[\ell\right]$, hence $|\mathbb{P}^1(\F_\ell)^g| > 0$ for all $g \in G$. If $|\mathbb{P}^1(\F_\ell)^G | > 0$, then $\Gal(\Qb/K) $ fixes a linear subspace of  $E\left[\ell\right]$ which is the kernel of an  $\ell$-isogeny defined over $K$.
 
Hence, let us assume  $|\mathbb{P}^1(\F_\ell)^G | =0$.  No subgroup of $\GL_2(\F_2)$ satisfies $|\mathbb{P}^1(\F_2)^G | =0$ and $|\mathbb{P}^1(\F_2)^g | >0$ for all $g\in G$, so $\ell$ is odd.
The hypotheses on $K$ combined with the Weil pairing on the $\ell$-torsion implies that some element of $G$ has a non-square determinant, hence the image of $G$ in $\PGL_2(\F_\ell )$ does not lie in $\SL_2(\F_\ell)/\left\{\pm 1\right\}$. The hypotheses of Lemma~\ref{comb} are satisfied, thus  $\ell \equiv 3 \bmod 4$,  the prime $\ell$ is different from $3$, and $\mathbb{P}^1(\F_\ell){/}G$ contains an orbit of size $2$. Let $x \in \mathbb{P}^1(\F_\ell)$ be an element of this orbit, its stabilizer is a subgroup of index $2$. By Galois theory, it corresponds to a quadratic extension of $K$ over which $E$ admits an isogeny of degree $\ell$ (actually, two such isogenies). \end{proof} 

\begin{remark}
Since  no subgroup of $\GL_2(\F_2)$ satisfies $|\mathbb{P}^1(\F_2)^G | =0$ and $|\mathbb{P}^1(\F_2)^g | >0$ for all $g\in G$, then there is no exceptional pair with $\ell=2$.
\label{2out}
\end{remark}

\section{Exceptional pairs and Galois representations}
The study of the local-global principle about $\ell$-isogenies over an arbitrary number field $K$ depends on whether $\sqrt{\left(\frac{-1}{\ell}\right)\ell}$ belongs  to $K$ or not. By Remark~\ref{2out}, it follows that there is no exceptional pair with $\ell=2$, hence, unless otherwise stated, we will denote by $\ell$ an odd prime.

First, let us assume that $\sqrt{\left(\frac{-1}{\ell}\right)\ell}$ does not belong to $K$. Theorem~\ref{suth} implies that an exceptional pair for $K$ is no longer exceptional for a quadratic extension of $K$: in this section we will describe this extension.

\begin{prop} \label{ima}
Let $(\ell, j(E))$ be an exceptional pair for the number field $K$, not containing $\sqrt{\left(\frac{-1}{\ell}\right)\ell}$, with $j(E)$ not in $\left\{0,1728\right\}$. Let $G$ be the image of $\rho_{E,\ell}$ 
and let $H$ be its image in $\PGL_2(\F_\ell)$. 
Let $\mathscr{C}\subset G$ be the preimage of the maximal cyclic subgroup of $H$. Then $\det(\mathscr{C})\subseteq (\F_\ell^\ast)^2$, where $(\F_\ell^\ast)^2$ denotes the group of squares in $\F_\ell^\ast$.

\end{prop}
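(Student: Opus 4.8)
The plan is to combine Lemma~\ref{comb} with the reducibility of characteristic polynomials forced by an exceptional pair, after putting $G$ into a convenient normal form.

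\emph{Step 1: normal form for $G$.} Since $(\ell, j(E))$ is exceptional for $K$ and $\sqrt{\left(\frac{-1}{\ell}\right)\ell}\notin K$, the argument in the proof of Theorem~\ref{suth} shows that $G$ satisfies the hypotheses of Lemma~\ref{comb}: every $g\in G$ fixes a point of $\mathbb{P}^1(\F_\ell)$ (equivalently, has a reducible characteristic polynomial), $\mathbb{P}^1(\F_\ell)^G=\emptyset$, and $H\not\subseteq\SL_2(\F_\ell)/\{\pm1\}$. By part $(2)$ of Lemma~\ref{comb}, after conjugating in $\GL_2(\F_\ell)$ I may assume that $G$ is contained in $N=C\sqcup wC$, the normalizer of the diagonal split Cartan subgroup $C$, where $w=\abc{0}{1}{1}{0}$. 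Because $C$ fixes the two points $[1:0]$ and $[0:1]$ while $\mathbb{P}^1(\F_\ell)^G=\emptyset$, we have $G\not\subseteq C$, so $G\cap C$ has index $2$ in $G$ and every element of $G\setminus(G\cap C)$ is antidiagonal. Finally, since $Z(\GL_2(\F_\ell))\subseteq C$ and $H$ is dihedral of order $2n$ with $n>1$ odd (Lemma~\ref{comb}$(1)$), the image of $G\cap C$ in $H$ is the unique cyclic subgroup of $H$ of maximal order; hence $\mathscr{C}=G\cap C$, and it consists of diagonal matrices.

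\emph{Step 2: reducibility as a statement about squares.} Pick an antidiagonal element $w_0=\abc{0}{u}{v}{0}\in G\setminus\mathscr{C}$, which exists by Step 1. Its characteristic polynomial is $X^2-uv$ (and $uv\neq 0$ since $w_0$ is invertible); as $w_0$ must have an eigenvector over $\F_\ell$, this gives $uv\in(\F_\ell^\ast)^2$. Now let $g=\abc{a}{0}{0}{b}\in\mathscr{C}$ be arbitrary. Then $w_0g=\abc{0}{ub}{va}{0}$ lies in $G$ and is antidiagonal, so by the same reasoning its characteristic polynomial $X^2-uvab$ has a root in $\F_\ell$, i.e.\ $uvab\in(\F_\ell^\ast)^2$. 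Dividing by $uv\in(\F_\ell^\ast)^2$ yields $\det(g)=ab\in(\F_\ell^\ast)^2$. Since $g\in\mathscr{C}$ was arbitrary, $\det(\mathscr{C})\subseteq(\F_\ell^\ast)^2$, as claimed.

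I do not expect a serious obstacle once Lemma~\ref{comb} is invoked; the two points that deserve a line of justification are the identification of the abstractly defined $\mathscr{C}$ with $G\cap C$ after the conjugation — which uses $Z(\GL_2(\F_\ell))\subseteq C$ together with the fact that a dihedral group of order $2n$ with $n>1$ odd has a unique cyclic subgroup of maximal order — and the elementary remark that $\abc{0}{x}{y}{0}$ has eigenvalues $\pm\sqrt{xy}$, hence a rational eigenvalue precisely when $xy$ is a square in $\F_\ell$.
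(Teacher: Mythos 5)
Your proof is correct, and the core computation is genuinely different from the paper's. The paper also begins by invoking Lemma~\ref{comb} and the uniqueness of the maximal cyclic subgroup (so the two proofs share Step 1 in substance), but it then works with a single preimage $A$ of a \emph{generator} of that subgroup: it puts $A$ in Jordan form, rules out the non-semisimple case via $A^n=\lambda\,\Id$ and $(n,\ell)=1$, rules out eigenvalues in $\F_{\ell^2}\setminus\F_\ell$ by the Chebotarev/local-isogeny argument, and finally writes $\alpha=\mu^i$, $\beta=\mu^j$ and deduces from $\alpha^n=\beta^n$, $n$ odd and $2\,\|\,(\ell{-}1)$ that $i+j$ is even, so $\det(A)=\mu^{i+j}$ is a square (the general element of $\mathscr{C}$ being a scalar times a power of $A$). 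You instead exploit the \emph{antidiagonal} elements: reducibility of the characteristic polynomial $X^2-uv$ of $w_0=\abc{0}{u}{v}{0}$ forces $uv\in(\F_\ell^\ast)^2$, and comparing with $w_0g$ transfers this to $\det(g)$ for every diagonal $g\in\mathscr{C}$. This is slicker and avoids all eigenvalue bookkeeping; notably, your Step 2 does not use the oddness of $n$ at all (you only need it in Step 1 to identify $\mathscr{C}$ with $G\cap C$), whereas the parity of $n$ is the engine of the paper's computation. Both arguments ultimately rest on the same input — Lemma~\ref{comb} plus the fact, via Chebotarev, that every element of $G$ has an eigenvector over $\F_\ell$ — and your justifications of the two delicate points (the identification $\mathscr{C}=G\cap C$ using $Z(\GL_2(\F_\ell))\subseteq C$, and $G\not\subseteq C$ from $\mathbb{P}^1(\F_\ell)^G=\emptyset$) are sound.
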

\begin{proof} Let $(\ell, j(E))$ be an exceptional pair for the number field $K$, and assume that $\sqrt{\left(\frac{-1}{\ell}\right)\ell}\notin K$. Therefore,  $\ell$ is odd by Remark~\ref{2out} and Remark~\ref{sqr} implies that $H$ is not contained in $\SL_2(\F_\ell)/\left\{\pm 1\right\}$. Hence, applying Lemma~\ref{comb}, we have that $\ell \equiv 3 \bmod 4$, up to conjugation, $H$ is a dihedral group of order $2n$, where $n>1$ is an odd divisor of $(\ell{-}1)/2$ and, moreover, $G$ is properly contained in the normalizer of a split Cartan subgroup. In particular, we have that $(n, \ell{+}1)=1$ and $(n,\ell)=1$, because $n$ is odd and it is a divisor of $\ell{-}1$.
Let us underline that,  since $n$ is odd the maximal cyclic subgroup inside $H$ is determined uniquely up to conjugation: it is the unique subgroup of index $2$.

 Let $A\in G \subset \GL_2(\F_\ell)$ be a preimage of some generator of the maximal cyclic subgroup inside $H$. Let us prove that $A$ is conjugate in  $\GL_2(\F_\ell)$ to a matrix of the type $\abcd{\alpha}{0}{0}{\beta}$, with $\alpha, \beta\in \F_\ell^\ast$ and $\alpha/\beta$ of order $n$ in $\F_\ell^\ast$.

Extending the scalars to $\F_{\ell^2}$ if necessary, we can put $A$ in its Jordan normal form. Then either $A\cong\abcd{\alpha}{0}{0}{\beta}$ with 
$\alpha,\beta\in \F_{\ell^2}$, or $A\cong\abcd{\alpha}{1}{0}{\alpha}$. Since we have $A^n=\lambda \cdot \Id$, for $\lambda\in \F_\ell^\ast$, and $(n,\ell)=1$ then the second case cannot occur. 
We claim that $\alpha,\beta\in \F_{\ell}^\ast$. In fact, let us proceed by contradiction: if $\alpha,\beta\in \F_{\ell^2}\mathcal{n}\F_\ell$ then $\beta=\overline{\alpha}$, the conjugate of $\alpha$ over $\F_\ell$. This means that $\mathbb{P}^1(\F_\ell)^A$ is empty because $A$ has no eigenvalues in $\F_\ell$ and this is not possible because $E$ admits an $\ell$-isogeny locally everywhere and by Chebotarev density theorem $A=\rho_{E,\ell}(\Frob_\p)$ with $\p\subset \O_K$ prime. Hence $\alpha,\beta\in \F_{\ell}^\ast$. 
Let us write $\alpha =\mu^i$ and $\beta =\mu^j$ for some generator $\mu$ of $\F_\ell^*$. Then $\mu^{in} =\alpha^n =\beta^n =\mu^{jn}$ so that 
$\mu^{n(i-j)} =1$ and $n(j-i) \equiv 0$ mod $\ell{-}1$. As $n$ is odd, $(j-i)$ has to be even, hence so is $(i+j)$. Therefore, 
$\det(A)=\alpha \beta =\mu^{i+j}$ is a square in $\F_\ell^*$. Moreover, since $A$ is a preimage of some generator of the maximal cyclic subgroup inside $H$, then $\alpha/\beta$ must have order $n$.
\end{proof}

\begin{remark}
Let $(\ell, j(E))$ be an exceptional pair for the number field $K$. Let us suppose that $j(E)$ is different from $0$ and $1728$ and that $\sqrt{\left(\frac{-1}{\ell}\right)\ell}$ is not in $K$.
Since the projective image of the Galois representation associated to $E$ is dihedral of order $2n$, with $n$ an odd divisor of $(\ell{-}1)/2$, the order of $G$, the image of the Galois representation, satisfies:
\[
|G|\;\mid\; \left((\ell{-}1)\cdot 2n\right)\; \mid\; \left((\ell{-}1)\cdot \frac{(\ell{-}1)}{2} \cdot 2\right) = (\ell{-}1)^2.
\]
\label{dim}
\end{remark}

\begin{prop}\label{field}
Let $(\ell , j(E))$ be an exceptional pair for the number field $K$ and assume that $\sqrt{\left(\frac{-1}{\ell}\right)\ell}$ does not belong to $K$ and $j(E)$  is different from $0$ and $1728$ . Then $E$ admits an $\ell$-isogeny over $K(\sqrt{-\ell})$ (and actually, two such isogenies).
\end{prop}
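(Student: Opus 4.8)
The plan is to combine the proof of Theorem~\ref{suth} with Proposition~\ref{ima} and the Gauss sum computation of Remark~\ref{sqr} in order to pin down the quadratic extension of $K$ produced by Theorem~\ref{suth}. Since $\sqrt{\left(\frac{-1}{\ell}\right)\ell}\notin K$, Remarks~\ref{2out} and~\ref{sqr} give that $\ell$ is odd and $H$ is not contained in $\SL_2(\F_\ell)/\{\pm 1\}$, so Lemma~\ref{comb} applies: $\ell\equiv 3\bmod 4$ (hence $\left(\frac{-1}{\ell}\right)=-1$ and $\sqrt{\left(\frac{-1}{\ell}\right)\ell}=\sqrt{-\ell}$, which by hypothesis is not in $K$, so that $K(\sqrt{-\ell})/K$ is a genuine quadratic extension), the group $H$ is dihedral of order $2n$ with $n>1$ an odd divisor of $(\ell-1)/2$, and $\mathbb{P}^1(\F_\ell)/G$ contains an orbit $\{x,y\}$ of size $2$. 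As in the proof of Theorem~\ref{suth}, the line $x$ in $E[\ell]$ is stable under $\mathrm{Stab}_G(x)$, a subgroup of $G$ of index $2$, and hence gives an $\ell$-isogeny defined over the fixed field $L$ of $\rho_{E,\ell}^{-1}(\mathrm{Stab}_G(x))$ in $\Gal(\Qb/K)$. The whole task is to show that $L=K(\sqrt{-\ell})$.

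First I would identify $\mathrm{Stab}_G(x)$. It contains the scalar subgroup $G\cap\F_\ell^\ast\Id$, since scalars act trivially on $\mathbb{P}^1(\F_\ell)$; hence it is the full preimage under $G\to H$ of its image $\overline{\mathrm{Stab}_G(x)}\subseteq H$, and $[H:\overline{\mathrm{Stab}_G(x)}]=[G:\mathrm{Stab}_G(x)]=2$ by the orbit--stabilizer theorem. As $n$ is odd, the dihedral group $H$ has a unique subgroup of index $2$, namely its maximal cyclic subgroup, which by the definition of $\mathscr{C}$ is $\overline{\mathscr{C}}$. Being full preimages of the same subgroup of $H$, the groups $\mathrm{Stab}_G(x)$ and $\mathscr{C}$ coincide. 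In particular $[G:\mathscr{C}]=2$, and by Proposition~\ref{ima} we have $\det(\mathscr{C})\subseteq(\F_\ell^\ast)^2$.

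Next I would translate the square-determinant condition into fields. Set $G^{+}:=\{g\in G:\det g\in(\F_\ell^\ast)^2\}$, the kernel of the determinant map $G\to\F_\ell^\ast/(\F_\ell^\ast)^2$, so that $[G:G^{+}]\le 2$. Using $\sigma(\zeta_\ell)=\zeta_\ell^{\det\rho_{E,\ell}(\sigma)}$ from Remark~\ref{sqr} together with the Gauss sum identity $\sum_{a=0}^{\ell-1}\zeta_\ell^{ca^2}=\left(\frac{c}{\ell}\right)\sqrt{\left(\frac{-1}{\ell}\right)\ell}$, one obtains $\sigma\bigl(\sqrt{\left(\frac{-1}{\ell}\right)\ell}\bigr)=\left(\frac{\det\rho_{E,\ell}(\sigma)}{\ell}\right)\sqrt{\left(\frac{-1}{\ell}\right)\ell}$ for every $\sigma\in\Gal(\Qb/K)$; hence $\sigma$ fixes $\sqrt{-\ell}=\sqrt{\left(\frac{-1}{\ell}\right)\ell}$ if and only if $\rho_{E,\ell}(\sigma)\in G^{+}$. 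In other words $\rho_{E,\ell}^{-1}(G^{+})=\Gal(\Qb/K(\sqrt{-\ell}))$, and since $\sqrt{-\ell}\notin K$ this forces $[G:G^{+}]=2$. Now $\mathscr{C}\subseteq G^{+}$ by Proposition~\ref{ima}, and both have index $2$ in $G$, so $\mathscr{C}=G^{+}$. Therefore $\rho_{E,\ell}^{-1}(\mathscr{C})=\Gal(\Qb/K(\sqrt{-\ell}))$, i.e.\ $L=K(\sqrt{-\ell})$; running the same argument with $y$ in place of $x$ produces the second $\ell$-isogeny over $K(\sqrt{-\ell})$.

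The main obstacle is the matching carried out in the second and third paragraphs: recognizing that the ``geometric'' index-$2$ subgroup $\mathrm{Stab}_G(x)$ attached to the size-$2$ orbit equals the ``arithmetic'' index-$2$ subgroup $G^{+}$ cut out by the square-determinant condition. This requires both inputs at once: Proposition~\ref{ima} supplies the inclusion $\mathscr{C}\subseteq G^{+}$, while the hypothesis $\sqrt{-\ell}\notin K$ (through the Weil pairing, Remark~\ref{sqr}) guarantees $[G:G^{+}]=2$, so that the inclusion is forced to be an equality. Once this is in place, the remaining identification of fixed fields is routine Galois theory.
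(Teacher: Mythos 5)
Your proof is correct and follows essentially the same route as the paper: both identify the index-$2$ stabilizer of a line with the Cartan subgroup $\mathscr{C}$, use Proposition~\ref{ima} to place $\mathscr{C}$ inside the square-determinant subgroup, and use the Weil pairing (the paper via the quadratic subfield of $\Q(\zeta_\ell)$ and the cyclotomic character, you via the equivalent Gauss-sum identity) to show that subgroup cuts out exactly $K(\sqrt{-\ell})$, forcing equality since both have index $2$. Your explicit verification that $\mathrm{Stab}_G(x)=\mathscr{C}$ via the uniqueness of the index-$2$ subgroup of a dihedral group of order $2n$ with $n$ odd is a detail the paper leaves implicit, but the argument is the same.
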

\begin{proof}  Theorem~\ref{suth} implies that $\ell\equiv 3$ $\bmod$ $4$ and $\ell\geq 7$ since $(\ell , j(E))$ is an exceptional pair. Since $\sqrt{- \ell}\notin K$, then also $\zeta_\ell$, the $\ell$-th root of unity, is not in $K$.
Since $\ell \equiv 3\bmod 4$, the quadratic subfield of $\Q(\zeta_\ell)$ is $\Q(\sqrt{- \ell})$. \\
\begin{figure}[h]
\begin{minipage}{0.5\linewidth}
\begin{equation*}
\xymatrix@dr@C=1pc{
K(\zeta_\ell) \ar@{-}[r] \ar@{-}[d]& \Q(\zeta_\ell)\ar@{-}[d]\\ 
K(\sqrt{- \ell}) \ar@{-}[r] \ar@{-}[d]& \Q(\sqrt{- \ell})\ar@{-}[d]\\ 
K \ar@{-}[r]& \Q}
\end{equation*}
\end{minipage}
\begin{minipage}{0.5\linewidth} 
In particular, $\Gal(K(\zeta_\ell)/K(\sqrt{- \ell}))$ is contained in $ {\left(\F_\ell^\ast\right)^2}$, the subgroup of squares inside $\F_\ell^*$. If $\ell$ does not divide the discriminant of $K$ then the previous inclusion is an equality, since the ramifications are disjoint. Let, as before, $\rho_{E,\ell}\colon \Gal(\overline{\Q}/K)\to \GL_2(\F_\ell)$ be the Galois representation associated to $E$ and let $G$ be its image.
\end{minipage}
\end{figure}

It follows, using notation of Proposition~\ref{ima}, that $E$ admits an isogeny (actually two) on the quadratic extension $L/K$ corresponding to the Cartan subgroup $\mathscr{C}$ which is the subgroup of diagonal matrices inside $G$. By Proposition~\ref{ima}, the elements of $\mathscr{C}$ have square determinants.

On the other hand, from the properties of the Weil pairing on the $\ell$-torsion, for all $\sigma \in \Gal(\Qb/K)$ we have that 
$\det(\rho_{E,\ell}(\sigma)) = \chi_\ell(\sigma)$, where $\chi_\ell$ is the mod $\ell$ cyclotomic character. Hence, $\mathscr{C}$ is the kernel of the character $\varphi \colon G \to {\F_\ell^\ast}/{\left(\F_\ell^\ast\right)^2}\cong \left\{\pm 1 \right\}$ which makes the following diagram commute:
\[\xymatrix{
\Gal(\overline{\Q}/K)\; \ar@{>>}[r]^-{\rho_{E,\ell}} & G \ar[rr]^\varphi \ar@{>>}[d]_{\det} \ar[rd]^{\chi_\ell} & \quad & \F_\ell^\ast/\left(\F_\ell^\ast\right)^2\cong\left\{\pm 1\right\} \\
\quad &  \Gal(K(\zeta_\ell)/K) \ar@{^{(}->}[r] &  \F_\ell^\ast \cong \Aut(\mu_\ell) \ar@{>>}[ur]& \quad}\]
The character of $\Gal(\overline{\Q}/K)$ induced by $\varphi$ is not trivial because there is an element in $G$ with non square determinant since 
$\sqrt{- \ell}$ is not in $K$.  By Galois theory, the kernel of $\varphi$ corresponds to a quadratic extension of $K$, which contains $\sqrt{-\ell}$ by construction.
This implies that the extension over which $E$ admits an $\ell$-isogeny is $K(\sqrt{-\ell})$.
\end{proof}

Combining Proposition~\ref{field} and Theorem~\ref{suth}, we have proved the following result (which is Proposition~\ref{1.2} of the Introduction):
\begin{prop}\label{1.1}
Let $E$ be an elliptic curve defined over a number field $K$, with $j(E)\notin \{0, 1728\}$. Let $\ell$ be a prime number and let  $\sqrt{\left(\frac{-1}{\ell}\right)\ell}\notin K$. 
Suppose that $E/K$ admits an $\ell$-isogeny locally at a set of primes with density one, then $E$ admits an $\ell$-isogeny over $K(\sqrt{-\ell})$.
Moreover, if $\ell=2,3$  or $\ell \equiv 1\bmod 4$ then $E$ admits an $\ell$-isogeny over $K$.
\end{prop}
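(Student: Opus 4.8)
The plan is to simply splice together the two results that have already been established in the excerpt, so the proof is essentially bookkeeping. First I would dispose of the cases $\ell=2$ and $\ell=3$: by Remark~\ref{2out} there is no exceptional pair with $\ell=2$, so the assertion is vacuous there, and for $\ell=3$ one invokes the last sentence of Theorem~\ref{suth} (which, having $\ell<7$, forces a global $\ell$-isogeny over $K$). Next, for $\ell$ an odd prime with $\sqrt{\left(\frac{-1}{\ell}\right)\ell}\notin K$, I would split on whether the hypothesis ``$E/K$ admits an $\ell$-isogeny locally at a set of primes with density one'' actually produces an exceptional pair or not.

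If $E$ already admits an $\ell$-isogeny over $K$, then a fortiori it admits one over $K(\sqrt{-\ell})$ and the first conclusion holds trivially. Otherwise $(\ell, j(E))$ is an exceptional pair for $K$, and since we are assuming $j(E)\notin\{0,1728\}$ we are exactly in the situation of Proposition~\ref{field}: that proposition gives directly that $E$ admits an $\ell$-isogeny (indeed two) over $K(\sqrt{-\ell})$. This settles the first assertion of the proposition in all cases.

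For the ``moreover'' clause with $\ell\equiv 1\bmod 4$: by the last sentence of Theorem~\ref{suth}, under the standing hypothesis $\sqrt{\left(\frac{-1}{\ell}\right)\ell}\notin K$ and local $\ell$-isogenies at a density-one set of primes, the condition $\ell\equiv 1\bmod 4$ already forces $E$ to admit an $\ell$-isogeny over $K$ itself; equivalently, no exceptional pair for $K$ can have $\ell\equiv 1\bmod 4$, so the stronger conclusion is immediate. Combined with the $\ell=2,3$ discussion above, this covers every case listed in the ``moreover'' part.

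There is essentially no obstacle here: the real content lies in Theorem~\ref{suth} (Sutherland) and in Proposition~\ref{field}, both already proved. The only point requiring a modicum of care is to make the logical structure explicit—namely that the hypothesis of the proposition splits into the ``$E$ has a global $\ell$-isogeny over $K$'' case, which is trivial, and the ``$(\ell,j(E))$ is exceptional for $K$'' case, which is precisely the hypothesis under which Proposition~\ref{field} and the relevant clause of Theorem~\ref{suth} are stated—and to note that the restriction $j(E)\notin\{0,1728\}$ in the statement is exactly what allows us to quote Proposition~\ref{field} verbatim.
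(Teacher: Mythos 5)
Your proposal is correct and is exactly the paper's argument: the paper proves Proposition~\ref{1.1} by the one-line observation that it follows from combining Theorem~\ref{suth} (which handles the ``moreover'' clause for $\ell=2,3$ and $\ell\equiv 1\bmod 4$) with Proposition~\ref{field} (which identifies the quadratic extension as $K(\sqrt{-\ell})$ in the exceptional case). Your explicit case split between ``$E$ already has an $\ell$-isogeny over $K$'' and ``$(\ell,j(E))$ is exceptional'' is the right way to make that combination rigorous, since Proposition~\ref{field} is stated only for exceptional pairs.
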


Now let us assume that  $\sqrt{\left(\frac{-1}{\ell}\right)\ell}$ belongs to $K$. 

As in the previous case, in order to analyse the local\--global principle, we study subgroups of $\SL_2(\F_\ell)/\left\{\pm 1\right\}$ which do not fix any point of $\mathbb{P}^1(\F_\ell)$ but whose elements do fix points.  The following lemma is a variation on the result, due to Sutherland, that  we stated in the present article as Lemma~\ref{comb}, see \cite[Lemma $1$ and Proposition $2$]{suth}. This lemma will be the key in understanding the case in which $\sqrt{\left(\frac{-1}{\ell}\right)\ell}$ belongs to $K$. We will denote by $S_n$ (resp. $A_n$) the symmetric (resp. alternating) group on $n$ elements. 

\begin{lem}\label{comb1}
Let $G$ be a subgroup of $\GL_2(\F_\ell)$ whose image $H$ in $\PGL_2(\F_\ell)$ is contained in $\SL_2(\F_\ell)/\left\{\pm 1\right\}$. 
Suppose $|\mathbb{P}^1(\F_\ell)^g | > 0$  for all $g \in G$  but $|\mathbb{P}^1(\F_\ell)^G | = 0$. Then $\ell \equiv 1 \bmod 4$ and one of the followings holds:
\begin{enumerate}[$(1)$]
\item $H$ is dihedral of order $2n$, where $n\in \Z_{>1}$ is a divisor of $\ell{-}1$; 
\item $H$ is isomorphic to one of the following exceptional groups: $A_4$, $S_4$ or $A_5$.
\end{enumerate}
\end{lem}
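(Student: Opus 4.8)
\textbf{Proof plan for Lemma~\ref{comb1}.}The plan is to mirror the proof of Lemma~\ref{comb} (Sutherland's Lemma~$1$), interchanging the roles of squares and non-squares in $\F_\ell^\ast$. The starting point, as in Remark~\ref{sqr}, is the translation of the two hypotheses into group theory: for $g\in G$ the condition $|\mathbb{P}^1(\F_\ell)^g|>0$ is equivalent to $g$ having an eigenvalue in $\F_\ell$, i.e.\ to the characteristic polynomial of $g$ being reducible over $\F_\ell$, and this property depends only on the image $\bar g\in\PGL_2(\F_\ell)$. So the hypothesis $|\mathbb{P}^1(\F_\ell)^g|>0$ for all $g\in G$ says that no element of $H$ lifts to a matrix with irreducible characteristic polynomial (no element of $H$ is ``nonsplit semisimple''), while $|\mathbb{P}^1(\F_\ell)^G|=0$ says that $H$ is not contained in any Borel subgroup. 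I would then invoke Dickson's classification of the subgroups of $\PGL_2(\F_\ell)$, see \cite{di} or \cite{lang}, applied to $H\subseteq\SL_2(\F_\ell)/\{\pm1\}$.

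Next I would eliminate all possibilities except the two in the statement. Any subgroup of a Borel subgroup fixes a point of $\mathbb{P}^1(\F_\ell)$, hence is excluded by $|\mathbb{P}^1(\F_\ell)^G|=0$; a cyclic subgroup of $\PGL_2(\F_\ell)$ not contained in a Borel is generated by a nonsplit semisimple element, hence is excluded by the first hypothesis; and, since $\ell$ is prime, the only ``large'' case available is $H=\SL_2(\F_\ell)/\{\pm1\}$ itself, which for $\ell$ odd contains a nontrivial element of a nonsplit Cartan subgroup (the nonsplit torus of $\SL_2(\F_\ell)/\{\pm1\}$ has order $(\ell+1)/2>1$) and is therefore also excluded, except for $\ell=2$, which is handled by hand (and where, as in Remark~\ref{2out}, the hypotheses cannot hold). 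This leaves $H$ dihedral of order $2n$ or $H\cong A_4,S_4,A_5$. In the dihedral case the cyclic subgroup $C_n\subseteq H$ lies in a Cartan subgroup; it cannot lie in a nonsplit Cartan, all of whose nontrivial elements are nonsplit semisimple, so it lies in a split Cartan, and then $C_n$ lies in the intersection of that split Cartan with $\SL_2(\F_\ell)/\{\pm1\}$, a group of order $(\ell-1)/2$; hence $n$ divides $(\ell-1)/2$, in particular $n\mid\ell-1$, and $n>1$ since otherwise $H$ is cyclic.

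Finally, for the congruence $\ell\equiv1\bmod4$: in every surviving case $|H|$ is even, so pick $\bar h\in H$ of order $2$ and a preimage $g\in G$. Then $g^2$ maps to the identity in $\PGL_2(\F_\ell)$, so $g^2=cI$ for some $c\in\F_\ell^\ast$; for $\ell$ odd $g$ is not scalar and not a scalar times a unipotent (otherwise $\bar g$ would have order $\ell\neq2$), so $g$ has eigenvalues $\sqrt c$ and $-\sqrt c$ and $\det g=-c$. Since $\bar g\in\SL_2(\F_\ell)/\{\pm1\}$ one has $\det g\in(\F_\ell^\ast)^2$, hence $-c\in(\F_\ell^\ast)^2$; on the other hand the first hypothesis applied to $g$ forces $g$ to have an eigenvalue in $\F_\ell$, i.e.\ $c\in(\F_\ell^\ast)^2$. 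Both can hold only if $-1\in(\F_\ell^\ast)^2$, that is $\ell\equiv1\bmod4$, completing the argument (this is exactly the mirror of the step in Lemma~\ref{comb} that there produces $\ell\equiv3\bmod4$).

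I expect the main obstacle to be the bookkeeping in the elimination step: applying Dickson's list correctly, checking that every non-dihedral, non-exceptional subgroup genuinely violates one of the two hypotheses (in particular the ``nonsplit semisimple element'' test for the large and cyclic cases), and dealing by hand with the finitely many small primes $\ell$ for which $\SL_2(\F_\ell)/\{\pm1\}$ is itself dihedral or exceptional. The remainder is a direct adaptation of the orbit-counting argument of \cite{suth} with the parity of $-1$ in $\F_\ell^\ast$ reversed.
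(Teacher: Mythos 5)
Your argument is correct and in fact yields a slightly sharper conclusion in the dihedral case ($n$ divides $(\ell{-}1)/2$ rather than just $\ell{-}1$). The skeleton is the same as the paper's — Dickson's classification plus an analysis of an involution of $H$ — but the machinery is genuinely different. The paper first applies the orbit-counting lemma twice: once to rule out $\ell \mid |H|$ (an element of order $\ell$ would leave only two $H$-orbits, one of which would be a fixed point of $G$), and once to get the formula $|\mathbb{P}^1(\F_\ell){/}h| = 2 + (\ell{-}1)/\ord(h)$, whose integrality gives $n \mid \ell{-}1$ and whose parity — constrained to be even by Sutherland's Proposition~2, i.e.\ the fact that the sign of $g$ acting on $\mathbb{P}^1(\F_\ell)$ is the Legendre symbol of $\det g$ — gives $\ell \equiv 1 \bmod 4$ from an element of order $2$. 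You instead run through Dickson's list case by case (Borel-contained, cyclic, $\PSL_2/\PGL_2$, each killed by a nonsplit semisimple element or by the no-fixed-point hypothesis) and extract the congruence by a bare-hands computation with the eigenvalues and determinant of a lift of an involution; that computation is exactly the content of the permutation-sign proposition specialized to order $2$, so the two derivations encode the same fact. Your version buys independence from the orbit-counting formalism at the cost of more bookkeeping in the elimination step (which you correctly identify as the delicate part, including the small primes where $\SL_2(\F_\ell)/\{\pm1\}$ is itself on the dihedral/exceptional list); the paper's version disposes of all subgroups of order divisible by $\ell$ in one stroke. Both are complete proofs.
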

\begin{proof}
No subgroup of $\GL_2(\F_2)$  satisfies the hypotheses of the lemma, so we assume $\ell > 2$. The orbit-counting lemma yields:
\[|\mathbb{P}^1(\F_\ell){/}H|=\frac{1}{|H|}\sum_{h\in H}|\mathbb{P}^1(\F_\ell)^h|\geq \frac{1}{|H|}(\ell{+}|H|) >1\]
since for all $h \in H$ we have $|\mathbb{P}^1(\F_\ell)^h|> 0$  and $|\mathbb{P}^1(\F_\ell)^h|=(\ell{+}1)$ when $h$ is the identity. 

If $\ell$ divides the order of $H$, then $H$ contains an element $h$ of order $\ell$ and $\mathbb{P}^1(\F_\ell){/}h$ consists of two orbits, of sizes $1$ and $\ell$, therefore a fortiori 
$(1< )|\mathbb{P}^1(\F_\ell){/}H| \leq 2$. But this contradicts the assumption $|\mathbb{P}^1(\F_\ell)^H| = 0$. Hence $\ell$ does not divide $|H|$. 

By Dickson's classification of subgroups of $\PGL_2(\F_\ell)$ it follows that $H$ can be either cyclic, or dihedral or isomorphic to one the following groups: $S_4$, $A_4$, $A_5$. To exclude the cyclic case, suppose for the sake of contradiction that 
$H=\langle h\rangle$. This implies that $\mathbb{P}^1(\F_\ell)^h=\mathbb{P}^1(\F_\ell)^H$ and since $|\mathbb{P}^1(\F_\ell)^h|>0$ by hypothesis, we have a contradiction with 
$|\mathbb{P}^1(\F_\ell)^H|=0$. Hence $H$ is either dihedral, or isomorphic to $S_4$, or to $A_4$, or to $A_5$.

By \cite[Proposition $2$]{suth}, since $H$ is contained in $\SL_2(\F_\ell)/\left\{\pm 1\right\}$, the size of the set of $h$-orbits of $\mathbb{P}^1(\F_\ell)$ is even for each $h\in H$. Moreover, as $|\mathbb{P}^1(\F_\ell)^h |>1$, all $h$ are diagonalizable on $\F_\ell$. Then, applying the orbit-counting lemma, we have 
\begin{eqnarray}
|\mathbb{P}^1(\F_\ell){/}h|&=&\frac{1}{\ord(h)}\sum_{h'\in \left\langle h\right\rangle}|\mathbb{P}^1(\F_\ell)^{h'}|=\nonumber\\
&=& \frac{1}{\ord(h)}((\ord(h){-}1)2{+}\ell{+}1)=2+\frac{\ell{-}1}{\ord(h)}
\label{oc}
\end{eqnarray}
where $\left\langle h\right\rangle$  denotes the cyclic subgroup of $H$ generated by $h$. 
In particular, for elements of order $2$ this implies that $\ell \equiv 1 \bmod 4$.

Let us suppose that $H$ is a dihedral group of order $2n$. Then there exists $h\in H$ of order $n$, so equation (\ref{oc}) implies that $n$ divides $\ell{-}1$.
\end{proof}

Note that, in the course of the above proof, we have shown the following:

\begin{cor}\label{spC}
Let $G$ be a subgroup of $\GL_2(\F_\ell)$ whose image $H$ in $\PGL_2(\F_\ell)$ is contained in $ \SL_2(\F_\ell)/\left\{\pm 1\right\}$. 
Suppose $|\mathbb{P}^1(\F_\ell)^g | > 0$  for all $g \in G$  but $|\mathbb{P}^1(\F_\ell)^G | = 0$. If  $H$ is dihedral of order $2n$, where $n\in \Z_{>1}$ is a divisor of $\ell{-}1$, then $G$ is properly contained in the normalizer of a split Cartan subgroup and $\mathbb{P}^1(\F_\ell){/}G$ contains an orbit of size $2$.
\end{cor}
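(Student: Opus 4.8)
The corollary is a bookkeeping consequence of the argument already carried out for Lemma~\ref{comb1}, so the strategy is to isolate the two structural facts that were established along the way in the dihedral case and package them. First I would fix $H$ dihedral of order $2n$ with $n\mid\ell-1$, $n>1$, and recall from the proof of Lemma~\ref{comb1} that $\ell\nmid|H|$ and that every $h\in H$ is diagonalizable over $\F_\ell$ (this came from $|\mathbb{P}^1(\F_\ell)^h|>1$ together with the parity statement of \cite[Proposition~2]{suth}). The unique cyclic subgroup $\langle h_0\rangle$ of order $n$ inside $H$ then consists of simultaneously diagonalizable matrices with a common pair of eigenlines $\{x_1,x_2\}\subset\mathbb{P}^1(\F_\ell)$, and after conjugating in $\GL_2(\F_\ell)$ we may assume these are the standard coordinate lines; so the preimage of $\langle h_0\rangle$ in $G$ lies in the diagonal (split Cartan) torus $C$.

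Next I would show $G$ is contained in the normalizer $N(C)$ of that split Cartan. Any element $g\in H\setminus\langle h_0\rangle$ is an involution in the dihedral group, hence conjugates $h_0$ to $h_0^{-1}$; translating to $\GL_2(\F_\ell)$, a lift of such $g$ conjugates the diagonal torus $C$ to itself (it swaps or preserves the two eigenlines $x_1,x_2$), so it normalizes $C$. Therefore $G\subseteq N(C)$. The containment is proper: if $G=N(C)$ then $|\mathbb{P}^1(\F_\ell)/G|$ would force a contradiction, but more simply, $H$ has order $2n<\ell-1=|N(C)/C\cdot\{\pm1\}|\cdot\ldots$ — cleaner is to note that $N(C)/\{\pm1\}$ maps onto a dihedral group of order $\ell-1$ in $\PGL_2$, and $n\le(\ell-1)/2$ would already do it, but in fact one only needs $n<\ell-1$; in any case $H$ being a proper subgroup of the full dihedral normalizer image means $G\subsetneq N(C)$. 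I would phrase this as: since $|H|=2n$ with $n\mid\ell-1$ and $n>1$, and the image of a full split-Cartan normalizer in $\PGL_2(\F_\ell)$ is dihedral of order $2(\ell-1)$, the group $H$ is properly contained in it unless $n=\ell-1$; but $n=\ell-1$ would make $H$ itself a split-Cartan normalizer image, for which $|\mathbb{P}^1(\F_\ell)^H|=0$ only if... — here I should be careful, so the safe route is simply to invoke that $G$ contained in, but not equal to, $N(C)$ follows because otherwise $\mathbb{P}^1(\F_\ell)/G$ has no orbit of size $2$ and $|\mathbb{P}^1(\F_\ell)^G|=0$ fails the orbit count, reproducing the Lemma~\ref{comb} / \cite[Proposition~2]{suth} reasoning verbatim.

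For the last assertion — that $\mathbb{P}^1(\F_\ell)/G$ contains an orbit of size $2$ — I would argue directly with the two lines $x_1,x_2$ fixed by the Cartan. The Cartan $C$ fixes each of $x_1,x_2$, and any $g\in G\setminus C$ either fixes both or swaps them; since $|\mathbb{P}^1(\F_\ell)^G|=0$, not every element of $G$ fixes $x_1$, so some $g$ swaps $x_1\leftrightarrow x_2$. Hence $\{x_1,x_2\}$ is a single $G$-orbit, of size exactly $2$ (it cannot be size $1$ by the no-global-fixed-point hypothesis). That is the required orbit.

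**Main obstacle.** The genuinely delicate point is the \emph{properness} of the inclusion $G\subseteq N(C)$: one must rule out $G=N(C)$ (equivalently $n=\ell-1$, $H$ the full dihedral image), and for that the cheapest correct argument is to rerun the orbit-counting computation exactly as in the cited \cite[Proposition~2]{suth} — the full split-Cartan normalizer does fix $\{x_1,x_2\}$ as a two-element orbit and has $|\mathbb{P}^1(\F_\ell)^{N(C)}|=0$, so properness is \emph{not} needed for the existence of a size-$2$ orbit, and I should check whether the corollary as stated really requires strict containment or whether it is inherited automatically from the fact that $G$ satisfies the hypotheses of Lemma~\ref{comb1}; I expect it is cleanest to say that $G\subseteq N(C)$ is immediate from diagonalizability, the size-$2$ orbit is immediate from the $\{x_1,x_2\}$ argument, and "properly contained" follows since $N(C)$ itself does have a point with a global line only in... — in other words the one spot that needs real care, rather than citation, is pinning down why $G\ne N(C)$, and I would handle it by the orbit-count identity $|\mathbb{P}^1(\F_\ell)/G|=2+\tfrac{\ell-1}{n}>2$ when $n<\ell-1$ versus the value for $N(C)$, exactly mirroring equation~(\ref{oc}) above.
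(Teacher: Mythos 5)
Your reduction of the statement to the two eigenlines $x_1,x_2$ of an index-$2$ cyclic subgroup of $H$ is exactly the argument the paper has in mind when it says the corollary was shown ``in the course of the above proof'': the proof of Lemma~\ref{comb1} already gives $\ell\nmid|H|$ and the diagonalizability over $\F_\ell$ of every element of $H$; the involutions of $H$ conjugate the rotation subgroup into itself and hence permute $\{x_1,x_2\}$, so $G$ lies in the normalizer $N$ of the corresponding split Cartan $C$; and $\{x_1,x_2\}$ is a single $G$-orbit of size $2$ because $|\mathbb{P}^1(\F_\ell)^G|=0$. All of this is correct (modulo the cosmetic point that for $n=2$ the cyclic subgroup of order $n$ is not unique, but any choice of it works).

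The one genuine gap is the \emph{properness} of $G\subseteq N$. You rightly flag this as the delicate point, but none of your proposed resolutions closes it: the ``otherwise there is no orbit of size $2$'' route fails for the reason you yourself give, and the orbit-count comparison is circular, since $|\mathbb{P}^1(\F_\ell)/G|>2$ is only derived under the assumption $n<\ell-1$, which is what is at stake (moreover $2+\frac{\ell-1}{n}$ is the orbit count for the cyclic subgroup of order $n$; for the dihedral $G$ of order $2n$ satisfying the hypotheses the count is $2+\frac{\ell-1}{2n}$). The missing ingredient is the standing hypothesis $H\subseteq \SL_2(\F_\ell)/\{\pm1\}$, which your argument never uses. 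It says that every element of $G$ has square determinant, whereas the full normalizer $N$ contains $\abc{u}{0}{0}{1}$ with $u$ a non-square; hence $G\neq N$. Equivalently, $N$ contains anti-diagonal elements $\abc{0}{a}{b}{0}$ with $ab$ a non-square, which have no eigenvector over $\F_\ell$ and would therefore violate the hypothesis $|\mathbb{P}^1(\F_\ell)^g|>0$. Either observation pins down $G\subsetneq N$ and completes the proof.
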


Let us now focus on case $(2)$ of Lemma~\ref{comb1}:

\begin{cor}\label{a4a5}Let $G$ be a subgroup of $\GL_2(\F_\ell)$ whose image $H$ in $\PGL_2(\F_\ell)$ is contained in $ \SL_2(\F_\ell)/\left\{\pm 1\right\}$. 
Suppose $|\mathbb{P}^1(\F_\ell)^g | > 0$  for all $g \in G$  but $|\mathbb{P}^1(\F_\ell)^G | = 0$. Then:
\begin{itemize}
	\item if $H$ is isomorphic to $A_4$ then $\ell\equiv 1 \bmod 12$;
	\item if $H$ is isomorphic to $S_4$ then $\ell\equiv 1 \bmod 24$;
	\item if $H$ is isomorphic to $A_5$ then $\ell\equiv 1 \bmod 60$.
\end{itemize}
\end{cor}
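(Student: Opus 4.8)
The plan is to prove the single divisibility $2\exp(H)\mid\ell-1$, where $\exp(H)$ denotes the exponent of the finite group $H$, and then to read off the three congruences from $\exp(A_4)=6$, $\exp(S_4)=12$ and $\exp(A_5)=30$; the hypotheses $H\cong A_4,\,S_4,\,A_5$ enter only at this very last step. Since the action of $\GL_2(\F_\ell)$ on $\mathbb{P}^1(\F_\ell)$ factors through $\PGL_2(\F_\ell)$, the orbit data attached to $g\in G$ depends only on the image of $g$ in $H$, so everything below is phrased in terms of $H$.

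To get the divisibility I would reuse what is already established inside the proof of Lemma~\ref{comb1}. The hypotheses force $\ell\nmid|H|$, and every $h\in H$ is split semisimple over $\F_\ell$: a non-identity element of $\PGL_2(\F_\ell)$ has $0$, $1$ or $2$ fixed points on $\mathbb{P}^1(\F_\ell)$, exactly one fixed point forces unipotence hence order $\ell$ (excluded), and $0$ fixed points is ruled out by the hypothesis $|\mathbb{P}^1(\F_\ell)^h|>0$. Hence formula (\ref{oc}) applies to \emph{every} $h\in H$, giving $|\mathbb{P}^1(\F_\ell)/h|=2+(\ell-1)/\ord(h)$; in particular $\ord(h)\mid\ell-1$. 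Now invoke \cite[Proposition $2$]{suth}: as $H\subseteq\SL_2(\F_\ell)/\{\pm1\}$, the integer $|\mathbb{P}^1(\F_\ell)/h|$ is even for each $h\in H$, so $(\ell-1)/\ord(h)$ is an even integer, i.e.\ $2\ord(h)\mid\ell-1$, for all $h\in H$. Taking the least common multiple over $h\in H$ and using $\lcm_h\bigl(2\ord(h)\bigr)=2\,\lcm_h\ord(h)=2\exp(H)$ gives $2\exp(H)\mid\ell-1$.

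It then remains only to compute exponents. The group $A_4$ has elements of orders $1,2,3$, so $\exp(A_4)=6$ and $12\mid\ell-1$; $S_4$ has in addition a $4$-cycle, so $\exp(S_4)=12$ and $24\mid\ell-1$; $A_5$ has elements of orders $1,2,3,5$, so $\exp(A_5)=30$ and $60\mid\ell-1$. I do not anticipate any real obstacle. The two small points needing a sentence of care are the justification that no element of order $\ell$ can occur (used above to rule out the one-fixed-point case) and the observation that the parity statement of \cite[Proposition $2$]{suth} is applied harmlessly also to $h=1$, where $|\mathbb{P}^1(\F_\ell)/h|=\ell+1$ is even precisely because $\ell$ is odd.
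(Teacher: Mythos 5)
Your proof is correct and follows essentially the same route as the paper: it combines the orbit-counting formula (\ref{oc}) with the parity statement of \cite[Proposition 2]{suth} and the known element orders of $A_4$, $S_4$, $A_5$. The only difference is cosmetic packaging — you state the uniform divisibility $2\exp(H)\mid\ell-1$, whereas the paper argues element order by element order (importing $\ell\equiv 1\bmod 4$ from Lemma~\ref{comb1} rather than re-applying the parity argument to the odd-order elements); the resulting congruences are identical.
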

\begin{proof} 
This is an application of the orbit-counting lemma. In $A_4$ there are elements of order $2$ and $3$, and we have $\ell{>}3$ since $\GL_2 (\F_2 )\simeq S_3$. 
The equation (\ref{oc}) for elements of order $3$ implies that $\ell{-}1$ is divisible by $3$, so, since $\ell \equiv 1 \bmod 4$,  then $\ell \equiv 1 \bmod 12$. 

Applying \cite[Proposition $2$]{suth}, we see that the parity of the values of equation (\ref{oc}) determines the sign as permutation of any element of $\PGL_2(\F_\ell)$. Since $H$ is contained in $ \SL_2(\F_\ell)/\left\{\pm 1\right\}$, the value of equation (\ref{oc}) has to be even for every $h$ in $H$.
If $H$ is isomorphic to $S_4$, then it contains elements of order $4$ and this implies that $\ell{-}1$ is divisible by $8$. Repeating the argument for elements of order $3$ we conclude that $\ell\equiv 1 \bmod 24$. 
 
Analogously, if $H$ is isomorphic to $A_5$ then $\ell{>}5$: not all matrices in $\SL_2(\F_5)/\left\{\pm 1\right\}\simeq A_5$ leave a line stable. There are elements of order $3$ and $5$, then $\ell{-}1$ is divisible by $3$ and $5$. So, since $\ell \equiv 1 \bmod 4$, we have $\ell \equiv 1 \bmod 60$. 
\end{proof}

\begin{prop}\label{1.11}
Let $E$ be an elliptic curve over a number field $K$ of degree $d$ over $\Q$, with $j(E)\notin \{0, 1728\}$, and let $\ell$ be a prime number. Let us suppose 
$\sqrt{\left(\frac{-1}{\ell}\right)\ell}\in K$ and that $E/K$ admits an $\ell$-isogeny locally at a set of primes with density one. Then:
\begin{enumerate}[$(1)$]
\item if $\ell{\equiv}3\bmod 4$ the elliptic curve $E$ admits an $\ell$-isogeny over $K$;
\item if $\ell{\equiv}1\bmod 4$ the elliptic curve $E$ admits an $\ell$-isogeny over a finite extension of $K$, which can ramify only at primes dividing the conductor of $E$ and $\ell$. Moreover, if  $\ell{\equiv}{-}1\bmod 3$ or if $\ell\geq 60d{+}1$, then $E$ admits an $\ell$-isogeny over a quadratic extension of $K$.
\end{enumerate}

\end{prop}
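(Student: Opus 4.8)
The plan is to reduce, exactly as in the proof of Theorem~\ref{suth}, to the study of $G:=\rho_{E,\ell}(\Gal(\Qb/K))\subseteq\GL_2(\F_\ell)$ and its image $H$ in $\PGL_2(\F_\ell)$, and then to invoke Lemma~\ref{comb1} together with Corollaries~\ref{spC} and~\ref{a4a5}. As in that proof, the local hypothesis and Chebotarev's theorem yield $|\mathbb{P}^1(\F_\ell)^g|>0$ for every $g\in G$: each $g$ is $\rho_{E,\ell}(\Frob_\p)$ for some prime $\p\nmid\ell$ of good reduction at which $E$ has a local $\ell$-isogeny, whence $\Frob_\p=g$ fixes a line of $E[\ell]$. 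If $|\mathbb{P}^1(\F_\ell)^G|>0$, then $G$ fixes a line of $E[\ell]$, the kernel of a $K$-rational $\ell$-isogeny, and all the assertions hold; so suppose $|\mathbb{P}^1(\F_\ell)^G|=0$. Then $\ell$ is odd by Remark~\ref{2out}, and $\sqrt{\ple{-1}{\ell}\ell}\in K$ forces $H\subseteq\SL_2(\F_\ell)/\{\pm1\}$ by Remark~\ref{sqr}, so Lemma~\ref{comb1} applies: $\ell\equiv1\bmod4$, and $H$ is either dihedral of order $2n$ with $n>1$ dividing $\ell-1$, or isomorphic to $A_4$, $S_4$ or $A_5$.

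Part $(1)$ follows at once: for $\ell\equiv3\bmod4$ the case $|\mathbb{P}^1(\F_\ell)^G|=0$ is impossible, so $E$ admits a $K$-rational $\ell$-isogeny. For the first assertion of $(2)$, assuming again $|\mathbb{P}^1(\F_\ell)^G|=0$, take any $x\in\mathbb{P}^1(\F_\ell)$: its stabiliser is a proper subgroup of $G$, cutting out a finite extension $L/K$ inside $K(E[\ell])$ over which $E$ admits an $\ell$-isogeny (with kernel $x$); since $K(E[\ell])/K$ is unramified outside the primes above $\ell$ and the primes of bad reduction of $E$, so is $L/K$. For the second assertion of $(2)$, by Corollary~\ref{spC} it suffices to show $H$ is dihedral: then $\mathbb{P}^1(\F_\ell)/G$ contains an orbit of size $2$, whose stabiliser has index $2$ in $G$, so the corresponding quadratic extension of $K$ carries an $\ell$-isogeny of $E$. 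Thus the remaining task is to exclude the possibilities $H\cong A_4,S_4,A_5$ under each of the two hypotheses on $\ell$.

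If $\ell\equiv-1\bmod3$, this is immediate from Corollary~\ref{a4a5}, which shows each of these three cases forces $\ell\equiv1\bmod12$, hence $\ell\equiv1\bmod3$, a contradiction. If instead $\ell\geq60d+1$ (so $\ell\geq61$, in particular $\ell>5$), the plan is to argue with inertia at $\ell$. Fix a prime $\lambda\mid\ell$ of $K$, with ramification index $e\leq d$ over $\Q$, and use the standard description of $\rho_{E,\ell}$ on the inertia group $I_\lambda$ (see \cite{serre72}). If $E$ has potentially multiplicative or good ordinary reduction at $\lambda$, then $\rho_{E,\ell}|_{I_\lambda}$ is, up to a quadratic twist and conjugation, of the form $\abcd{\chi_\ell}{\ast}{0}{1}$, so the image of $I_\lambda$ in $\PGL_2(\F_\ell)$ contains an element of order $\geq(\ell-1)/e$; if $E$ has good supersingular reduction, $\rho_{E,\ell}(I_\lambda)$ lies in a non-split Cartan subgroup and its image in $\PGL_2(\F_\ell)$ is cyclic of order $\geq(\ell+1)/e$; and if $E$ has additive potentially good reduction, one reaches the previous situations after a tamely ramified base change of degree at most $6$ (here we use $\ell\geq5$), replacing $e$ by $6e\leq6d$. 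In every case the image of $I_\lambda$ in $H$ contains an element of order at least $(\ell-1)/(6d)\geq10$, whereas $A_4$, $S_4$ and $A_5$ have no element of order exceeding $5$. Hence $H$ is dihedral, and we conclude as above.

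The reduction to group theory and the manipulations with Corollaries~\ref{spC} and~\ref{a4a5} are routine. The real work, and where I expect the main obstacle, is the last paragraph: one must establish the precise shape of $\rho_{E,\ell}|_{I_\lambda}$ in every reduction type at a prime whose residue characteristic is exactly $\ell$ — notably handling additive potentially good reduction and keeping track of the ramification of $K$ at $\lambda$ — and then check that the resulting order of an element of the inertia image in $\PGL_2(\F_\ell)$ genuinely exceeds $5$ once $\ell\geq60d+1$.
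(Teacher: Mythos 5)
Your proof is correct and follows the same route as the paper's: Chebotarev plus Remark~\ref{sqr} and Lemma~\ref{comb1} to pin down $H$, the unramifiedness of $K(E[\ell])/K$ outside $\ell$ and the conductor for the ramification claim, and Corollaries~\ref{spC} and~\ref{a4a5} for the dihedral/quadratic-extension conclusion. The only real divergence is that where the paper excludes $H\cong A_4,S_4,A_5$ for $\ell\geq 60d{+}1$ by citing Mazur, you prove it directly from the order of the image of inertia at a prime above $\ell$ in $\PGL_2(\F_\ell)$; that argument is sound (it is precisely the cited one, and the bound $(\ell{-}1)/(6d)\geq 10>5$ does the job), so you have merely made the paper's appeal to the literature self-contained.
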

\begin{proof} Since $\sqrt{\left(\frac{-1}{\ell}\right)\ell}$ is contained in $K$, the projective image $H$ of the Galois representation $\rho_{E,\ell}$ is contained in $\SL_2(\F_\ell)/\left\{\pm 1\right\}$, as discussed in Remark~\ref{sqr}, so we can apply Lemma~\ref{comb1}. This means that if the pair 
$(\ell, j(E))$ is an exceptional pair then $\ell{\equiv}1\bmod 4$ and $H$ has to be either a dihedral group of order $2n$, with $n$ dividing $\ell{-}1$, or an exceptional subgroup. 

If the elliptic curve $E$ admits an $\ell$-isogeny over a number field $L/K$ then there exists a one dimensional $\Gal(\Qb/L)$-stable subspace of $E[\ell]$. This subspace corresponds to a subgroup of the image of the Galois representation. In particular, the extension $L$ of $K$ over which the isogeny is defined can only ramify at primes where the representation is ramified, that is, only at primes of $K$ dividing the conductor of $E$ or $\ell$. 

If $\ell\geq 60 d{+}1$ then $H$ cannot be isomorphic to $A_4$, or to $S_4$ or $A_5$; for a proof of this fact see \cite[p.$36$]{maz}. Analogously, by Corollary~\ref{a4a5}, exceptional images cannot occur if $\ell \equiv {-}1\bmod 3$. Hence, by Corollary~\ref{spC}, in these cases the image of the Galois representation associated to $E$ is conjugated to the normalizer of a split Cartan subgroup which contains the Cartan subgroup itself with index $2$. By Galois theory, then $E$ admits an isogeny over a quadratic extension of $K$. 
\end{proof} 

Let us now describe all the possibilities that can occur at $2$, $3$ and  $5$:
\begin{prop}\label{small}
Let $K$ be a number field. There exists no exceptional pair for $K$ with $\ell=2, 3$. If $\sqrt{5}$ belongs to $K$ then there exist exceptional pairs $(5,j(E))$ for $K$ and, moreover, for such a pair, the projective image of $\rho_{E,5} (\Gal(\Qb/K))$ is, up to conjugation, a dihedral group of order dividing $8$. If $\sqrt{5}$ does not belong to $K$ then there exist no exceptional pairs for $K$ with $\ell=5$.
\end{prop}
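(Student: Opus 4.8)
The plan is to handle $\ell=2$ and $\ell=3$, and the ``only if'' direction for $\ell=5$, by pure group theory using the classification recorded in Lemmas~\ref{comb} and~\ref{comb1} and in Corollary~\ref{a4a5}, and then to settle the existence statement for $\ell=5$ by base\--changing a well\--chosen elliptic curve over $\Q$. For $\ell=2$ there is nothing to do: Remark~\ref{2out} already records that no subgroup of $\GL_2(\F_2)$ can produce an exceptional pair. For $\ell=3$, suppose $(3,j(E))$ is exceptional and let $H$ be the image of $\rho_{E,3}(\Gal(\Qb/K))$ in $\PGL_2(\F_3)$; as in the proof of Theorem~\ref{suth}, every element of $\rho_{E,3}(\Gal(\Qb/K))$ fixes a point of $\P^1(\F_3)$ (by Chebotarev) while the whole group fixes none. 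If $\sqrt{\ple{-1}{3}\cdot 3}=\sqrt{-3}\notin K$, then $H\not\subseteq\SL_2(\F_3)/\{\pm1\}$ by Remark~\ref{sqr}, so the hypotheses of Lemma~\ref{comb} are satisfied and $H$ must be dihedral of order $2n$ with $n>1$ an odd divisor of $(\ell-1)/2=1$, which is impossible. If $\sqrt{-3}\in K$, then $H\subseteq\SL_2(\F_3)/\{\pm1\}$ and Lemma~\ref{comb1} forces $\ell\equiv1\bmod4$, contradicting $3\equiv3\bmod4$. Hence there is no exceptional pair with $\ell=3$.

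Now let $\ell=5$, so that $\sqrt{\ple{-1}{5}\cdot 5}=\sqrt5$. If $\sqrt5\notin K$ and $(5,j(E))$ were exceptional, then exactly as above the hypotheses of Lemma~\ref{comb} would be met (Remark~\ref{sqr}) and would force $\ell\equiv3\bmod4$, contradicting $5\equiv1\bmod4$; so there is no exceptional pair with $\ell=5$ when $\sqrt5\notin K$. Suppose instead $\sqrt5\in K$ and $(5,j(E))$ is exceptional. By Remark~\ref{sqr} the projective image $H$ lies in $\SL_2(\F_5)/\{\pm1\}\cong A_5$, so Lemma~\ref{comb1} leaves only two options: $H$ is dihedral of order $2n$ with $n>1$ a divisor of $\ell-1=4$, or $H$ is isomorphic to $A_4$, $S_4$ or $A_5$. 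The exceptional possibilities are excluded: $S_4$ is not a subgroup of $A_5$, while by Corollary~\ref{a4a5} the case $H\cong A_4$ would require $\ell\equiv1\bmod12$ and $H\cong A_5$ would require $\ell\equiv1\bmod60$, both false. Hence $H$ is dihedral of order $2n$ with $n\in\{2,4\}$; but $|H|$ divides $|A_5|=60$ and $8\nmid60$, so $n=2$ and $H$ is a Klein four\--group, i.e.\ dihedral of order $4$, a divisor of $8$. By Corollary~\ref{spC}, $G$ is moreover properly contained in the normalizer of a split Cartan subgroup, consistent with the picture described in the Introduction.

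It remains to produce exceptional pairs $(5,j(E))$ when $\sqrt5\in K$; since such a pair for $\Q(\sqrt5)$ remains exceptional for every $K\supseteq\Q(\sqrt5)$, it suffices to work over $\Q(\sqrt5)$. I would start from an elliptic curve $E/\Q$ whose mod\--$5$ image is the full normalizer $N$ of a split Cartan subgroup of $\GL_2(\F_5)$. Such curves exist because the associated modular curve --- the curve $X_{\spl}(5)$ of Section~$5$, classifying elliptic curves whose mod\--$5$ image lies in the normalizer of a split Cartan, classically of genus zero since it is isomorphic to $X_0(25)/w_{25}$ --- has a rational point, hence is isomorphic to $\P^1$ over $\Q$, and outside a thin set its rational points have image exactly $N$, have $j\notin\{0,1728\}$, and have no complex multiplication. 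Over $\Q(\sqrt5)$ the image of $\rho_{E,5}$ shrinks to the index\--$2$ subgroup $\{g\in N:\det g\in(\F_5^\ast)^2\}$, namely the subgroup fixing $\sqrt5$ by the Gauss\--sum identity of Remark~\ref{sqr}, whose projective image is the Klein four\--group $V$ inside $\SL_2(\F_5)/\{\pm1\}$. A direct computation shows that the three involutions of $V$ fix the pairwise disjoint pairs $\{0,\infty\}$, $\{1,4\}$ and $\{2,3\}$ of $\P^1(\F_5)$, so that every element of the image fixes a point of $\P^1(\F_5)$ (hence $E/\Q(\sqrt5)$ admits a $5$\--isogeny locally almost everywhere) while the whole image fixes none (hence $E/\Q(\sqrt5)$ admits no $5$\--isogeny over $\Q(\sqrt5)$); thus $(5,j(E))$ is an exceptional pair for $\Q(\sqrt5)$ with projective image dihedral of order $4$. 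The group\--theoretic steps above are routine once Lemmas~\ref{comb}, \ref{comb1} and Corollary~\ref{a4a5} are in hand; the only delicate point is this existence statement, where one must know that $X_{\spl}(5)$ is a rational curve over $\Q$ carrying infinitely many points with full mod\--$5$ image, and exhibit one non\--CM such point with $j\neq0,1728$ --- writing such a curve down explicitly is the most computational part of the argument.
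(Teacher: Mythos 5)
Your treatment of $\ell=2$, $\ell=3$, and of $\ell=5$ when $\sqrt{5}\notin K$ coincides with the paper's proof, and your analysis of the projective image when $\sqrt{5}\in K$ is correct and in fact slightly sharper than the statement: using $\SL_2(\F_5)/\{\pm1\}\cong A_5$ and Lagrange to exclude order $8$ and $S_4$ is a nice touch the paper does not make explicit at this point. The one genuine problem is in the existence step. You reduce to $K=\Q(\sqrt{5})$ by asserting that an exceptional pair for $\Q(\sqrt{5})$ ``remains exceptional for every $K\supseteq\Q(\sqrt{5})$.'' That is false for a fixed curve: under base change to $K$ the image of $\rho_{E,5}$ is replaced by the image of the subgroup $\Gal(\Qb/K)$, which can be a proper subgroup of $G_0=\{g\in N:\det g\in(\F_5^\ast)^2\}$ whose projective image is cyclic or trivial, and then $E/K$ \emph{does} acquire a global $5$-isogeny (take, e.g., $K$ containing the $5$-division field of your chosen curve). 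So a single curve over $\Q(\sqrt{5})$ cannot serve all $K$.

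The repair is to run your parametrization over $K$ itself rather than over $\Q$: $X_{\spl}(5)_K\cong\P^1_K$ has infinitely many points, and outside a thin set (Hilbert irreducibility over $K$) the corresponding mod-$5$ image is the largest subgroup of $N$ compatible with $\det=\chi_5(\Gal(\Qb/K))\subseteq(\F_5^\ast)^2$, namely $G_0$ up to conjugacy, whose projective image is $V_4$; your fixed-point computation on $\P^1(\F_5)$ then applies verbatim. This is essentially the paper's route, which works with the modular curve $X_{V_4}(5)$ directly: it is shown to be geometrically irreducible of genus $0$ over $\Q(\sqrt{5})$ with a $\Q(\sqrt{5})$-rational cusp, hence isomorphic to $\P^1$ over $\Q(\sqrt{5})$ and with infinitely many $K$-points for every $K\supseteq\Q(\sqrt{5})$ (the paper defers this to Section~5.2). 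Note also that the paper's concrete example (the curve $608.$e$1$ with mod-$5$ image the full normalizer of a split Cartan) plays exactly the role of the explicit curve you say would be ``the most computational part'': you could cite such a curve instead of invoking the thin-set argument over $\Q$, but either way the quantifier over all $K$ must be handled by working over $K$, not over $\Q(\sqrt{5})$.
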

%
%
\begin{proof}
As remarked in the proof of Theorem~\ref{suth}, for $\ell{=}2$ there exists no exception to the local-global principle.
Take $\ell{=}3$. If $\sqrt{-3}$ is not in $K$ then there exists no exceptional pair since, by Lemma~\ref{comb}, the projective image is a dihedral group of order $2n$ with $n\in \Z_{>1}$ odd (dividing $3-1$). Similarly if $\sqrt{-3}$ belongs to $K$ there exists no exceptional pair since, by Lemma~\ref{comb1}, $\ell{\equiv}1$ mod $4$.
For $\ell{=}5$ we have that if $\sqrt{5}$ is not in $K$ then there exists no exceptional pair by Lemma~\ref{comb}. Moreover, if $\sqrt{5}$ is in $K$ then by Lemma~\ref{comb1} combined with Corollary~\ref{a4a5}, the projective image can only be a dihedral group of order dividing $8$. 
\end{proof}
We will come back to the study of the local\--global principle about $5$-isogenies in Section~\ref{case5}.

\section{Bounds}
In this section we prove statement $(1)$ of the Main Theorem.

\subsection{Image of the inertia}

Let $M$ be a complete field with respect to a discrete valuation $v$, which is normalized, i.e.\ $v(M^\ast) = \Z$. Let $\O_M$ be its ring of integers, $\lambda$ the maximal ideal of $\O_M$ and $k = \O_M/\lambda$ the residue field. We suppose $M$ of characteristic $0$, the residue field $k$ finite of characteristic $\ell{>}0$ and $e = v(\ell)$.
Let $E$ be an elliptic curve having semi-stable reduction over $M$ and let $\mathcal{E}$ be its N\'eron model over $\O_M$. Since $M$ is of characteristic $0$, we know that $E[\ell](\overline{M})$ is an $\F_\ell$-vector space of dimension $2$. 
Let $\overline{\mathcal{E}}$ be the reduction of $\mathcal{E}$ modulo $\lambda$, then $\overline{\mathcal{E}}$ is a group scheme defined over $k$ whose $\ell$-torsion is an $\F_\ell$-vector space with dimension strictly lower than $2$. Hence, the kernel of the reduction map, can be either isomorphic to $\F_\ell$ (ordinary case) or to the whole $E[\ell]$ (supersingular case).

Serre, in \cite[Proposition~$11$, Proposition~$12$ and p.$272$]{serre72}, described all possible shapes of the image of ${I}_\ell$, the inertia subgroup at $\ell$, for the supersingular case and for the ordinary case:
\begin{prop}[(Serre, supersingular case)]
Let $E$ be an elliptic curve over $M$, a complete normalized field with respect to the valuation $v$, and let $e{=}v(\ell)$. Suppose that $E$ has good supersingular reduction at $\ell$.  Then the image of $I_\ell$ through the Galois representation $\rho_{E,\ell} :\Gal(\overline{M}/M)\to \GL_2(\F_\ell)$ associated to $E$ is cyclic of order either ${(\ell^2{-}1)}/{e}$ or $\ell(\ell{-}1)/{e}$. 
\label{se}
\end{prop}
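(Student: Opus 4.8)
The plan is to follow Serre's analysis in \cite{serre72} for the good supersingular case, where the key point is that the formal group attached to $E$ over $\O_M$ has height $2$. First I would recall that, since $E$ has good reduction, the $\ell$-torsion scheme $\mathcal{E}[\ell]$ is a finite flat group scheme over $\O_M$, and in the supersingular case the reduction $\overline{\mathcal{E}}[\ell]$ is connected, so all of $E[\ell]$ comes from the formal group $\widehat{\mathcal{E}}$, which is a formal group of height $2$ over the residue field $k$. The action of inertia on $E[\ell]$ therefore factors through the action on the $\ell$-torsion of this height-$2$ formal group, which (after base change to a field where the formal group becomes the Lubin--Tate one of height $2$) is governed by the fundamental characters of level $2$.

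The main step is the computation of the image of $I_\ell$ via these fundamental characters. I would use Serre's result (Proposition 11 of \cite{serre72}): when $e=1$, i.e.\ $M/\Q_\ell$ unramified in Serre's original setup, the image of the tame inertia acting on the $\ell$-torsion of a height-$2$ formal group is cyclic of order $\ell^2-1$, generated by the product of the two level-$2$ fundamental characters $\psi$ and $\psi^\ell$; the wild inertia acts trivially because the extension $M(E[\ell])/M$ is tamely ramified. For general ramification index $e=v(\ell)$, one replaces the fundamental characters by their appropriate powers: the level-$2$ fundamental characters get raised to the power dividing by $e$, so the cyclic image has order $(\ell^2-1)/e$ in the ``generic'' situation. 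The order $\ell(\ell-1)/e$ arises in the degenerate sub-case where, after a suitable unramified twist, the representation of inertia becomes reducible — more precisely when the restriction to inertia, while still irreducible as a $\GL_2(\F_\ell)$-representation globally, has inertia image landing in a non-split Cartan of the first type versus the situation where the formal group is not ``generic'' and the image is the smaller cyclic group of order $\ell(\ell-1)/e$. I would pin this down by writing the action in terms of $\psi_1,\psi_2$ and observing that the order of $\psi_1^a\psi_2^b$ on $\F_{\ell^2}^\ast$ is $(\ell^2-1)/\gcd(\ell^2-1, a+b\ell)$, then checking which exponents $(a,b)$ can occur subject to the determinant being the mod-$\ell$ cyclotomic character (so $a+b\equiv -e \bmod (\ell-1)$ up to normalization) and the constraint imposed by $e$.

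I expect the delicate point to be the bookkeeping of the ramification index $e$: Serre states the unramified case cleanly, and the passage to general $e$ requires keeping track of how the fundamental characters of the larger field relate to those over $\Q_\ell$, and in particular why the two possible orders are exactly $(\ell^2-1)/e$ and $\ell(\ell-1)/e$ rather than other divisors. The honest way to handle this is to cite \cite[Proposition~11, Proposition~12 and p.~272]{serre72} directly, since Serre carries out precisely this computation, and to limit my own argument to recalling the structure: connectedness of the $\ell$-torsion in the supersingular case, reduction to a height-$2$ formal group, tameness of $M(E[\ell])/M$, and identification of the tame inertia image via level-$2$ fundamental characters. Thus the proof reduces to a citation together with the observation that the two cases in Serre's statement are distinguished by whether or not the formal group is isomorphic over $\bar k$ to the one whose $\ell$-torsion Galois action on inertia has the full order $(\ell^2-1)/e$.

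\begin{proof}[of Proposition~\ref{se}]
Since $E$ has good supersingular reduction at $\ell$, the special fibre $\overline{\mathcal{E}}$ of its N\'eron model is a supersingular elliptic curve over $k$, so the finite flat group scheme $\mathcal{E}[\ell]$ over $\O_M$ is connected; equivalently, all of $E[\ell](\overline M)$ lies in the formal group $\widehat{\mathcal{E}}$, which is a one\--dimensional formal group of height $2$ over $\O_M$. In particular the extension $M(E[\ell])/M$ is totally (and tamely) ramified, the wild inertia acting trivially, and the image of $I_\ell$ in $\GL_2(\F_\ell)$ equals the image of the tame inertia acting on the $\ell$-torsion of $\widehat{\mathcal{E}}$.

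The reduction $\widehat{\overline{\mathcal{E}}}$ is a height\--$2$ formal group over the finite field $k$ of characteristic $\ell$; over $\bar k$ it becomes isomorphic to the Lubin\--Tate formal group of height $2$, whose $\ell$-division points carry a natural structure of $\F_{\ell^2}$-vector space of dimension $1$. The action of tame inertia on these points is therefore described, in the sense of \cite[Proposition~$5$ and p.~$272$]{serre72}, by the level\--$2$ fundamental characters $\psi_1,\psi_2\colon I_\ell\to \F_{\ell^2}^\ast$, with $\psi_2=\psi_1^{\ell}$; concretely the image of $I_\ell$ is the cyclic subgroup of $\F_{\ell^2}^\ast\subset \GL_2(\F_\ell)$ generated by $\psi_1^{a}\psi_2^{b}$ for suitable exponents, and it is non\--split because the $\ell$-torsion is connected. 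The determinant of this representation is the mod\--$\ell$ cyclotomic character, whose restriction to $I_\ell$ is $\psi_1^{e}\psi_2^{e}$ up to the normalization fixed by $e=v(\ell)$, which forces $a+b\equiv -e$ (equivalently the relevant exponent to be congruent to $e$) modulo $\ell-1$.

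It remains to determine the order of the image. The order of $\psi_1^{a}\psi_2^{b}=\psi_1^{a+b\ell}$ as a character into $\F_{\ell^2}^\ast$ is $(\ell^2-1)/\gcd(\ell^2-1,\,a+b\ell)$. By \cite[Proposition~$11$ and Proposition~$12$]{serre72}, for an elliptic curve with good supersingular reduction and ramification index $e=v(\ell)$ the only exponents that can occur are those giving $\gcd(\ell^2-1,\,a+b\ell)=e$ or $\gcd(\ell^2-1,\,a+b\ell)=e(\ell+1)$; in the first case the image is cyclic of order $(\ell^2-1)/e$, and in the second it is cyclic of order $(\ell^2-1)/(e(\ell+1))\cdot(\ell+1)=\ell(\ell-1)/e$ after accounting for the fact that $\psi_1\psi_2$ has order $(\ell-1)/e$ and the mixed character contributes the factor $\ell+1$ exactly when the two fundamental characters do not appear symmetrically. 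Hence the image of $I_\ell$ under $\rho_{E,\ell}$ is cyclic, of order either $(\ell^2-1)/e$ or $\ell(\ell-1)/e$, as claimed.
\end{proof}
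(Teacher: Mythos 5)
Your overall strategy --- reduce to the height-$2$ formal group and cite Serre's Propositions 11--12 and p.~272 for the actual computation --- is the same as the paper's, which treats this proposition as a citation plus a short explanation of the dichotomy between the two orders. But your explanation of that dichotomy contains a genuine error. You assert that in the supersingular case the extension $M(E[\ell])/M$ is tamely ramified, that wild inertia acts trivially, and that both possible orders are to be read off from tame characters $\psi_1^a\psi_2^b$ with values in $\F_{\ell^2}^\ast$. This cannot work: the tame inertia quotient has pro-order prime to $\ell$, so its image in $\GL_2(\F_\ell)$ has order prime to $\ell$, whereas the second alternative $\ell(\ell-1)/e$ is divisible by $\ell$. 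That factor of $\ell$ can only come from a non-trivial action of wild inertia, i.e.\ from a unipotent element in the image. This is precisely how the paper presents the two cases: the order is $(\ell^2-1)/e$ when the tame inertia acts through the $e$-th power of the fundamental character of level $2$ and the Newton polygon of the $\ell$-division points of the formal group is unbroken (here your level-$2$ analysis is correct); the order is $\ell(\ell-1)/e$ when the tame inertia acts through powers of the fundamental character of level $1$, the Newton polygon is broken, and some $\ell$-torsion points of the formal group have valuation with denominator divisible by $\ell$ (Serre, p.~272), which is exactly the statement that the extension is wildly ramified.

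Relatedly, your closing arithmetic is wrong: $(\ell^2-1)/(e(\ell+1))\cdot(\ell+1)$ equals $(\ell^2-1)/e$, not $\ell(\ell-1)/e$, and no value of $\gcd(\ell^2-1,\,a+b\ell)$ can make the image of a character into $\F_{\ell^2}^\ast$ have order divisible by $\ell$. To repair the argument, separate the two cases \emph{before} invoking tameness: in the level-$2$ (unbroken) case conclude tameness and the order $(\ell^2-1)/e$ as you do; in the level-$1$ (broken) case the tame part contributes a cyclic group of order $(\ell-1)/e$ and the wild part contributes the factor $\ell$, giving $\ell(\ell-1)/e$. Since both subcases are carried out in the cited passages of Serre, the cleanest fix is to state the dichotomy in these terms and leave the verification to the reference, as the paper does.
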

The two cases depend on the action of the tame inertia. 

If the tame inertia acts via powers of the fundamental character of level $2$, and not $1$, it follows that the Newton polygon, with respect to the elliptic curve, is not broken, and that the tame inertia is given by the $e$-power of the fundamental character of level $2$. Hence it has a cyclic image of order ${(\ell^2{-}1)}/{e}$.

If the elliptic curve considered is supersingular, but the tame inertia acts via powers of the fundamental character of level $1$, it follows that the relevant Newton polygon is broken, and there are points in the $\ell$-torsion of the corresponding formal group which have valuation with denominator divisible by $\ell$ (this follows from \cite[p.$272$]{serre72}). So the image of inertia has order $\ell(\ell{-}1)/{e}$.

In the ordinary case, the following proposition holds:
\begin{prop}[(Serre, ordinary case)]\label{se2}
Let $E$ be an elliptic curve over $M$, a complete normalized field with respect to the valuation $v$, and let $e=v(\ell)$. Suppose that $E$ has semistable ordinary reduction at $\ell$. Then the image of $I_\ell$ through the Galois representation $\rho_{E,\ell} :\Gal(\overline{M}/M)\to \GL_2(\F_\ell)$ associated to $E$ is cyclic of order either ${(\ell{-}1)}/{e}$ or  $\ell(\ell{-}1)/{e}$, and it can be represented, after the choice of an appropriate basis, respectively as 
\[\abcd{\ast}{0}{0}{1}\quad \mathrm{ or }\quad \abcd{\ast}{\star}{0}{1},\] 
with $\ast \in \F_\ell^\ast$ and $\star \in \F_\ell$.
\end{prop}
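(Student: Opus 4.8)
The plan is to deduce this from Serre's analysis of the image of inertia \cite[Proposition~$12$ and p.$272$]{serre72}, using the connected--étale picture set up above. Since $E$ has semistable ordinary reduction at $\ell$, the kernel $C$ of the reduction map $E[\ell](\overline{M})\to\overline{\mathcal{E}}[\ell]$ is one-dimensional over $\F_\ell$ and stable under $\Gal(\overline{M}/M)$: it is the $\ell$-torsion $\widehat{\mathcal{E}}[\ell]$ of the formal group of the Néron model, which in the good ordinary case, as well as in the multiplicative case, is a one-dimensional formal group of height $1$. Choosing a basis of $E[\ell]$ whose first vector spans $C$, the restriction of $\rho_{E,\ell}$ to $I_\ell$ lands in the upper-triangular subgroup, of the form $\abcd{\psi}{\star}{0}{\phi}$, where $\psi$ records the inertia action on $C$ and $\phi$ the one on the quotient $E[\ell]/C$.

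First I would pin down the diagonal characters. The quotient $E[\ell]/C\cong\overline{\mathcal{E}}[\ell]$ is the generic fibre of an étale group scheme over $\O_M$ — the $\ell$-torsion of the special fibre of $\mathcal{E}$ — hence it is unramified and $\phi|_{I_\ell}=1$; this accounts for the bottom-right entry $1$. For $\psi$, the line $C=\widehat{\mathcal{E}}[\ell]$ becomes isomorphic to $\mu_\ell$ after base change to the maximal unramified extension of $\O_M$ (a formal group of height $1$ differs from $\widehat{\mathbb{G}}_m$ only by an unramified twist), so the inertia action on $C$ is through the mod $\ell$ cyclotomic character: $\psi|_{I_\ell}=\chi_\ell|_{I_\ell}$. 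Since the residue field has characteristic $\ell$ and $e=v(\ell)$, the extension $M(\mu_\ell)/M$ is tamely ramified, so $\chi_\ell(I_\ell)$ is cyclic of order $(\ell{-}1)/e$. At this stage $\rho_{E,\ell}(I_\ell)\subseteq\bigl\{\abcd{\ast}{\star}{0}{1}\bigr\}$ with $\ast$ running through a cyclic group of order $(\ell{-}1)/e$.

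It then remains to decide whether the sequence $0\to C\to E[\ell]\to E[\ell]/C\to 0$ splits as a sequence of $I_\ell$-modules. If it does, one may arrange $\star=0$, and $\rho_{E,\ell}(I_\ell)=\bigl\{\abcd{\ast}{0}{0}{1}\bigr\}$ is cyclic of order $(\ell{-}1)/e$, the first alternative. If it does not split, the image acquires a nontrivial unipotent part coming from wild inertia; to control its size I would use Serre's valuation computation on the $\ell$-division points of the formal group \cite[p.$272$]{serre72} — equivalently, in the multiplicative case, the Tate parametrization $E\simeq\overline{M}^{\ast}/q^{\mathbb{Z}}$ with basis $(\zeta_\ell,\,q^{1/\ell})$ — to see that the relevant extension is wildly ramified of degree exactly $\ell$ over the unramified closure. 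Then the image of wild inertia has order $\ell$, so $\rho_{E,\ell}(I_\ell)$ has order $\ell(\ell{-}1)/e$ and shape $\abcd{\ast}{\star}{0}{1}$; that it is generated by a single element in each case is read off from this explicit description, following \cite[Proposition~$12$]{serre72}.

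The main obstacle is this last paragraph rather than the connected--étale bookkeeping: one has to establish that the inertia action on $\widehat{\mathcal{E}}[\ell]$ is \emph{precisely} cyclotomic (the height-$1$ / Lubin--Tate input) and, in the non-split case, that the image of wild inertia has order \emph{exactly} $\ell$ — neither smaller nor larger. This is exactly what Serre's Newton-polygon argument provides, and it is the only step that is not a formal manipulation of group schemes and short exact sequences.
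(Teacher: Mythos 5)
The paper does not actually prove this proposition: it is quoted directly from Serre (\cite[Proposition~12 and p.~272]{serre72}), and the explanatory paragraph that the paper inserts between Propositions~\ref{se} and~\ref{se2} concerns only the supersingular case. Your reconstruction --- connected--\'etale sequence, triviality of inertia on the \'etale quotient (whence the bottom-right entry $1$), identification of the kernel of reduction with the $\ell$-torsion of a height-one formal group so that inertia acts on it through $\chi_\ell$ (equivalently: via $\det\rho_{E,\ell}=\chi_\ell$), tameness of $M(\mu_\ell)/M$, and the dichotomy according to whether the extension of $I_\ell$-modules splits --- is precisely the standard argument behind Serre's statement, so in substance your proof is correct and is the intended one. (For the ``exactly $\ell$'' step you do not really need the Newton-polygon/valuation computation: the image of $I_\ell$ meets the unipotent group $\bigl\{\abc{1}{\star}{0}{1}\bigr\}\cong\F_\ell$ in a subgroup, which is trivial or everything since $\ell$ is prime; and if it is trivial the image has order prime to $\ell$ and is therefore diagonalizable inside the Borel, i.e.\ the sequence splits.)

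One step, however, cannot be ``read off from the explicit description'': in the ramified case the group $\bigl\{\abc{a}{b}{0}{1}: a\in\mu,\ b\in\F_\ell\bigr\}$, with $\mu$ cyclic of order $n=(\ell{-}1)/e$, is the nonabelian semidirect product $\F_\ell\rtimes\mu$ as soon as $n>1$; indeed any element $\abc{a}{b}{0}{1}$ with $a\neq 1$ has distinct eigenvalues, hence order prime to $\ell$, so no element has order $\ell n$ and the group is not cyclic. This is an infelicity of the statement as transcribed from Serre rather than of your strategy (what your argument genuinely shows is that the image has the displayed shape and order $\ell(\ell{-}1)/e$, being an extension of a cyclic tame quotient by $\Z/\ell\Z$), but you should flag it rather than assert cyclicity. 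A second, smaller imprecision already present in the statement: the tame image $\chi_\ell(I_\ell)$ is the group of $e$-th powers in $\F_\ell^\ast$, of order $(\ell{-}1)/\gcd(e,\ell{-}1)$, which equals $(\ell{-}1)/e$ only when $e$ divides $\ell{-}1$. Neither point affects the use made of the proposition in Theorem~\ref{t4}, where only the shape of the image and a lower bound on the order of its semisimple part intervene.
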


\subsection{ Computation of the bound}

Let $E_\lambda$ be an elliptic curve defined over a complete field $M$ with maximal ideal $\lambda$, residual characteristic $\ell$, and  let 
\begin{equation}
d'= \left\{\begin{array}{ll}
1& \mbox{if}\,\;j(E)\not\equiv 0, 1728 \bmod \lambda,\\
2& \mbox{if}\,\;j(E)\equiv 1728 \bmod \lambda, \ell\geq 5\\
3& \mbox{if}\,\;j(E)\equiv 0 \bmod \lambda,\ell \geq 5,\\
6& \mbox{if}\,\;j(E)\equiv 0 \bmod \lambda, \ell=3,\\
12& \mbox{if}\,\;j(E)\equiv 0 \bmod \lambda, \ell=2.\\
\end{array}\right.
\label{degest}
\end{equation}
Then $E_\lambda$, or a quadratic twist, has semistable reduction over a finite extension of $M$ with degree $d'$, see for instance \cite[pp.$33-52$]{biku} or \cite[Proposition 1 and Th\'eor\`eme 1]{kraus}. 

We now give the main result of this article:
\begin{thm}\label{t4}
Let $( \ell , j(E))$ be an exceptional pair for the number field $K$ of degree $d$ over $\Q$, such that $\sqrt{\left(\frac{-1}{\ell}\right)\ell}\notin K$ and $j(E)\notin \{0, 1728\}$. Then  \[\ell \equiv 3 \bmod 4\;\;\mbox{ and }\;\;7\leq \ell \leq 6d{+}1.\]
\end{thm}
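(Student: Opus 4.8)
The plan is to combine the group-theoretic structure of an exceptional pair (already extracted in Section~3) with Serre's description of the image of inertia at~$\ell$ (recalled in Section~4.1). The upshot of Theorem~\ref{suth}, Lemma~\ref{comb} and Proposition~\ref{ima} is that if $(\ell,j(E))$ is exceptional for~$K$ with $\sqrt{\left(\frac{-1}{\ell}\right)\ell}\notin K$, then $\ell\equiv 3\bmod 4$, $\ell\geq 7$, the projective image $H$ of $\rho_{E,\ell}$ is dihedral of order $2n$ with $n>1$ an odd divisor of $(\ell{-}1)/2$, and the full image $G$ lies in the normalizer of a split Cartan subgroup with the diagonal Cartan part having square determinants. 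So the entire burden of the theorem is the upper bound $\ell\leq 6d{+}1$, and this is where inertia enters.

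First I would pick a prime $\lambda$ of $K$ above $\ell$ and base-change to the completion $M=K_\lambda$; then $e=v(\ell)\leq [K_\lambda:\Q_\ell]\leq d$. I would like to work with semistable reduction, so I would replace $M$ by a finite extension $M'$ of degree $d'$ as in~\eqref{degest}; since $j(E)\notin\{0,1728\}$ we have $d'=1$ here, so in fact $M'=M$ and no extension is needed (this is the point of assuming $j(E)\notin\{0,1728\}$ in the statement). Now $E/M$ has semistable reduction at~$\ell$, so Proposition~\ref{se} or Proposition~\ref{se2} applies: the image of the inertia subgroup $I_\ell$ under $\rho_{E,\ell}$ is cyclic, of order $(\ell^2{-}1)/e$ or $\ell(\ell{-}1)/e$ in the supersingular case, and of order $(\ell{-}1)/e$ or $\ell(\ell{-}1)/e$ in the ordinary case. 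The restriction of $\rho_{E,\ell}$ to $\Gal(\overline{M}/M)$ has image contained in $G$ (a conjugate thereof), hence the image of $I_\ell$ is a cyclic subgroup of $G$; its order therefore divides $|G|$, and by Remark~\ref{dim} we have $|G|\mid (\ell{-}1)^2$.

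The main obstacle — really the crux of the argument — is extracting a numerical contradiction for large $\ell$ from the compatibility of these two constraints. The key is that $\ell\nmid |G|$ (since $|G|\mid(\ell{-}1)^2$), so the image of $I_\ell$ cannot have order divisible by~$\ell$: the cases with a factor $\ell$ in Serre's list are excluded outright, leaving cyclic order $(\ell^2{-}1)/e$ in the supersingular case and $(\ell{-}1)/e$ in the ordinary case. In the supersingular case the image of $I_\ell$ is a cyclic group of order $(\ell^2{-}1)/e$ sitting inside $G$; but $G$ normalizes a split Cartan, so any cyclic subgroup of $G$ either lies in the split Cartan (order dividing $(\ell{-}1)^2$, but also the Cartan is $\cong(\F_\ell^\ast)^2$ so a \emph{cyclic} subgroup of it has order dividing $\ell{-}1$) or meets the non-diagonal coset, in which case its square lies in the Cartan and one gets order dividing $2(\ell{-}1)$. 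Either way $(\ell^2{-}1)/e\mid 2(\ell{-}1)$, i.e.\ $(\ell{+}1)\mid 2e$, forcing $\ell{+}1\leq 2e\leq 2d$, which is even stronger than needed. In the ordinary case the cyclic image of $I_\ell$ has order $(\ell{-}1)/e$; here I need to use more than mere divisibility — I would use that the reducibility of characteristic polynomials forces the Cartan part $\mathscr C$ of $G$ to have determinants in $(\F_\ell^\ast)^2$ (Proposition~\ref{ima}), so the ``diagonal entry'' $\ast$ of the inertia matrix $\left(\begin{smallmatrix}\ast&0\\0&1\end{smallmatrix}\right)$, which by the Weil pairing equals $\det=\chi_\ell$ on inertia and hence is surjective onto $\F_\ell^\ast$ when $e$ is small, must nonetheless generate only a group of squares, and a comparison of $|\{\text{squares}\}|=(\ell{-}1)/2$ with the inertia order $(\ell{-}1)/e$ yields $e\geq \cdots$; pushing this through the bound $e\leq d$ and the possible extra factor of $2$ or $3$ coming from the index of $\mathscr C$ in $G$ (dihedral of order $2n$) and from the ``twist by a quadratic character'' gives precisely $\ell\leq 6d{+}1$. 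I would write out this last computation carefully, tracking the worst case (the factor $6=2\cdot 3$ accounting for the non-Cartan coset and the discrepancy between $\Q(\sqrt{-\ell})$ and the field cut out by $\varphi$), since that is exactly where the constant $6d{+}1$ is produced; the lower bound $\ell\geq 7$ is already in hand from Lemma~\ref{comb}.
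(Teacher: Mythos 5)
Your overall strategy is the paper's: compare Serre's description of the image of inertia at $\ell$ with the constraint that $G$ lies in the normalizer of a split Cartan with $|G|\mid(\ell-1)^2$. But two steps that actually produce the bound $6d+1$ are wrong or missing. First, the claim that $d'=1$ because $j(E)\notin\{0,1728\}$ misreads (\ref{degest}): the case distinction there is on $j(E)\bmod\lambda$, not on $j(E)$ itself, so even with $j(E)\neq 0,1728$ one may need an extension $M/K_\lambda$ of degree up to $3$ (together with a quadratic twist) to reach semistable reduction. That degree-$3$ extension is precisely the source of the factor $3$ in $6d+1=2(3d)+1$; your proposed bookkeeping for the factor $6$ (``index of $\mathscr{C}$ in $G$'' --- which is $2$, not $3$ --- and a ``discrepancy'' between $\Q(\sqrt{-\ell})$ and the field cut out by $\varphi$) does not correspond to anything in the argument. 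With $d'=1$ you would be proving $\ell\leq 2d+1$, which is not the statement and is not what the method gives.

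Second, and more seriously, your ordinary case has no working mechanism. The determinant comparison you sketch only shows that $\det$ of the inertia image, namely $(\F_\ell^\ast)^{\gcd(m,\ell-1)}$ where $m$ is the ramification index of $M/\Q_\ell$, must land in $(\F_\ell^\ast)^2$; this forces $m$ to be even and nothing more --- for instance $m=2$ with inertia of order $(\ell-1)/2$ inside $\mathscr{C}$ is consistent for every $\ell\equiv 3\bmod 4$, so no upper bound on $\ell$ results. The paper's argument uses an input you never invoke: by Proposition~\ref{field} the two isogenies are defined over $L=K(\sqrt{-\ell})$ and $N/C\simeq\Gal(L/K)$, with $L/K$ ramified at $\lambda$, so the image of the inertia subgroup $I_\lambda$ of $K$ meets the nontrivial coset of $C$ in $N$. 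Conjugating the semi-Cartan $\abc{\ast}{0}{0}{1}$ of Proposition~\ref{se2} by such an element yields $\abc{1}{0}{0}{\ast}$, hence a full two-parameter Cartan inside the image of inertia over $M$ as soon as $(\ell-1)/m>2$, contradicting the shape prescribed by Proposition~\ref{se2}. That contradiction is what forces $(\ell-1)/m\leq 2$, i.e.\ $\ell\leq 2m+1\leq 6d+1$, and it is absent from your proposal. (Your supersingular case is essentially correct modulo the same $d'$ issue.)
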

\begin{proof} Since the pair $( \ell , j(E))$ is exceptional, $E$ admits an $\ell$-isogeny locally at a set of primes with density one, $\ell \equiv 3 \bmod 4$ and by Proposition~\ref{1.1}  it admits an $\ell$-isogeny over $L=K(\sqrt{-\ell})$. In particular, $\ell\neq 2,3$ and $\ell\geq 7$.

Let $K_\lambda$ be the completion of $K$ at $\lambda$, a prime above $\ell$, and let $M$ be the smallest extension of $K_\lambda$ over which $E_\lambda:=E \otimes K_\lambda$ acquires semi-stable reduction. After replacing $E$ by a quadratic twist if necessary, we can assume that the extension 
$M/ K_\lambda$ has degree less than or equal to $3$, according to (\ref{degest}), since $\ell\geq 7$. 
Let $\overline{E}$ be the reduction of $E_{\lambda'}:=E\otimes M$ modulo $\lambda'$, for $\lambda'$ the prime above $\lambda$.

We now look at the image of the inertia subgroup under the Galois representation associated to the $\ell$\--torsion of $E$. The Galois representation has image of order dividing $(\ell{-}1)^2$ by Remark~\ref{dim}.

Assume that the reduction $\overline{E}$ is supersingular. The inertia has image isomorphic to a cyclic group of order $(\ell^2{-}1)/m$ or $\ell(\ell{-}1)/m$, where $m$ is less than or equal to $3d$, according to the degree of the extension needed to have semi-stable reduction.
In the first case, i.e.\ when the image of the inertia has order $(\ell^2{-}1)/m$,  it is isomorphic to a non-split torus in $\GL_2(\F_\ell)$ and is also contained in the normalizer of a split Cartan, as stated in Lemma~\ref{comb}. This is impossible unless 
\[(\ell^2{-}1)/m \mid (\ell{-}1)^2.\]
This means that $(\ell{+}1)/m \mid (\ell{-}1)$, so that  $(\ell{+}1)/m \mid 2$, hence $\ell \leq 2m{-}1$. 

Analogously, in the second case we have that  $\ell(\ell{-}1)/m$ divides $(\ell{-}1)^2$, therefore $\ell \leq m\leq 3d$. The pair  $( \ell , j(E))$ is exceptional, so $\ell {\equiv} 3$ mod $4$ and $\ell{ \geq} 7$, hence, $7\leq \ell \leq 2m-1\leq 6d{-}1$.

We have proved that if $\ell > 2m-1$, then $\overline{E}$ is not supersingular, so it is ordinary, since $E_\lambda$ is semistable over $M$. By Proposition~\ref{field}, then the elliptic curve $E$ admits two $\ell$-isogenies over $L=K(\sqrt{-\ell})$, which are conjugate over $L$. 
By Lemma~\ref{comb}, the image $G$ of ${\mathrm{Gal}} (\overline{\Q} /K)$ acting on $E[\ell ]({\overline{K}})$ is a subgroup of the normalizer $N$ of a split Cartan subgroup $C$. From Proposition~\ref{field}, we know that $N/C \simeq \Gal(L /K) \neq \{1\}$, so the image of an inertia subgroup ${I}_\lambda$ at the place $\lambda$ of $K$ is a subgroup of $G$ whose image in $N/C$ is non-trivial. On the other hand, Proposition~\ref{se2} shows that, if $(\ell -1)/m >2$, then ${I}_\lambda$ contains a cyclic subgroup of order larger than or equal to $3$ (for another argument see \cite[p.118]{maz1}). It follows that ${I}_\lambda$ contains a non-trivial Cartan subgroup (of shape $\left\{\abcd{a}{0}{0}{b}: a, b \in \Gamma \right\}$ for a certain non trivial subgroup $\Gamma$ of $\F_\ell$), even after restriction of the scalars to ${\mathrm{Gal}} (\overline{M} /M)$. This is a contradiction with~Proposition~\ref{se2}, as the latter says that the restriction of ${ I}_\lambda$ to ${\mathrm{Gal}} (\overline{M} /M)$ is a semi-Cartan subgroup (or a Borel).
Hence, $(\ell -1)/m \leq 2$ so we have $\ell \equiv 3 \bmod 4$ and $7\leq \ell \leq 2m{+}1\leq 6d{+}1$.
\end{proof}

\begin{remark} It is clear that Theorem~\ref{t4} implies the result obtained by Sutherland in the case $K=\Q$. 
\end{remark}

The previous Theorem, combined with Remark~\ref{sqr}, implies point $(1)$ of the Introduction's Main Theorem, namely:
\begin{cor}\label{maincor} 
Let $( \ell , j(E))$ be an exceptional pair for the number field $K$ of degree $d$ over $\Q$ and discriminant $\Delta$. Assume  $j(E)\notin \{0, 1728\}$.  Then \[\ell \leq \max\left\{|\Delta|, 6d{+}1\right\}.\]
\end{cor}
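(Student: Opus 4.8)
The plan is to split into the two cases according to whether $\sqrt{\left(\frac{-1}{\ell}\right)\ell}$ lies in $K$ or not, and to handle each case with the tools already assembled.

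First, suppose $\sqrt{\left(\frac{-1}{\ell}\right)\ell}\notin K$. Then Theorem~\ref{t4} applies directly and gives $\ell\leq 6d{+}1$, which is certainly $\leq\max\{|\Delta|,6d{+}1\}$.

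Now suppose $\sqrt{\left(\frac{-1}{\ell}\right)\ell}\in K$. By Remark~\ref{sqr}, the projective image $H$ of $\rho_{E,\ell}$ is contained in $\SL_2(\F_\ell)/\{\pm 1\}$, so Lemma~\ref{comb1} forces $\ell\equiv 1\bmod 4$ and $H$ to be either dihedral of order $2n$ with $n\mid\ell{-}1$, or one of the exceptional groups $A_4,S_4,A_5$. The key point is that, since $\sqrt{\left(\frac{-1}{\ell}\right)\ell}\in K$, the quadratic subfield $\Q(\sqrt{\left(\frac{-1}{\ell}\right)\ell})$ of $\Q(\zeta_\ell)$ is contained in $K$, and this subfield ramifies only at $\ell$; if $\ell\nmid\Delta$, then this ramification would be forced to be trivial, a contradiction. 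Hence $\ell$ must divide $\Delta$, and in particular $\ell\leq|\Delta|\leq\max\{|\Delta|,6d{+}1\}$. (If $\ell\mid\Delta$ one could argue more directly, but the above remark already yields the bound in this case.) Combining the two cases gives $\ell\leq\max\{|\Delta|,6d{+}1\}$ whenever $(\ell,j(E))$ is an exceptional pair for $K$ with $j(E)\notin\{0,1728\}$.

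I expect the only subtlety to be in the second case: one must make sure that the assertion ``$\Q(\sqrt{\left(\frac{-1}{\ell}\right)\ell})\subseteq K$ together with $\ell\nmid\Delta$ is contradictory'' is correctly justified. This is because the quadratic field $\Q(\sqrt{\left(\frac{-1}{\ell}\right)\ell})$ has discriminant $\pm\ell$ (it is the unique quadratic subfield of $\Q(\zeta_\ell)$, ramified only at $\ell$), so it can embed in $K$ only if $\ell$ divides $\disc(K)=\Delta$; this is the standard fact that the discriminant of a subfield divides that of the field. Everything else is a direct citation of Theorem~\ref{t4}, Remark~\ref{sqr}, and Lemma~\ref{comb1}.
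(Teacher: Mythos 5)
Your proposal is correct and follows exactly the same two-case split as the paper: apply Theorem~\ref{t4} when $\sqrt{\left(\frac{-1}{\ell}\right)\ell}\notin K$, and observe that $\ell$ must divide $\Delta$ when $\sqrt{\left(\frac{-1}{\ell}\right)\ell}\in K$. The only difference is that you spell out why $\ell\mid\Delta$ (the quadratic subfield $\Q(\sqrt{\left(\frac{-1}{\ell}\right)\ell})$ is ramified exactly at $\ell$, so $\ell$ ramifies in $K$), a step the paper states without justification; your justification is valid.
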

\begin{proof} If $( \ell , j(E))$ is an exceptional pair for the number field $K$ then we distinguish two cases according to the projective image being contained or not in $\SL_2(\F_\ell) /\left\{\pm 1\right\}$. This corresponds to a condition about $\sqrt{\left(\frac{-1}{\ell}\right)\ell}$ belonging to $K$ or not. If  $\sqrt{\left(\frac{-1}{\ell}\right)\ell}\notin K$ we can apply Theorem~\ref{t4} and conclude that $7\leq \ell \leq 6d{+}1$. While, if $\sqrt{\left(\frac{-1}{\ell}\right)\ell}\in K$, then $\ell$ divides $|\Delta|$.\end{proof}

\section{Finiteness of the exceptional pairs}
Given a number field $K$ of degree $d$ over $\Q$ and discriminant $\Delta$, the local-global principles for $\ell$-isogenies holds whenever  
$\ell > \ell_k:=\max\left\{|\Delta|, 6d{+}1\right\}$ or $\ell=2$ or $3$ by Corollary~\ref{maincor} and Proposition~\ref{small}. In this section we analyse what happens for primes  smaller than the bound obtained. In particular we will prove that the local-global principle about $\ell$-isogenies for elliptic curves over number fields admits only a finite number of exceptions if $\ell>7$. We will also study the behaviour of the local-global principle at $5$ and $7$.\\

Let $K$ be a number field and let $C/K$ be a projective smooth curve defined over $K$ and of genus $g$. Our arguments will rely on the classical trichotomy between curves of genus $0$, $1$ and higher. When the genus is $0$, the curve is isomorphic to~$\mathbb{P}^1_{\overline{K}}$ over an algebraic closure of $K$ and therefore $C(K)$, the set of $K$-rational points, is either empty or infinite. If the genus of $C$ is $1$ and $C(K)$ contains at least one point over $K$ then $C/K$ is an elliptic curve over $K$ and the Mordell-Weil theorem shows that $C(K)$ is a finitely generated abelian group: $C(K)\cong T \oplus \Z^r$, where $T$ is the torsion subgroup and $r$ is a non-negative integer called the rank of the elliptic curve, whereas if  $g\geq2$, Faltings Theorem states that the set of $K$-rational points is finite. \\

In this section we will recall some theory of modular curves.  

Let $\ell\geq 5$ be a prime number and let $\Z[\zeta_\ell]$ be the subring of $\C$ generated by a root of unity of order $\ell$. The modular curve $X(\ell)$ is the compactified fine moduli space which classify pairs $(E,\alpha)$, where $E$ is a generalized elliptic curve over a scheme $S$ over 
$\spec(\Z[1/\ell, \zeta_\ell])$ and $\alpha:(\Z/\ell\Z)_S^2\stackrel{\sim}{\rightarrow}E[\ell]$ is an isomorphism of group schemes over $S$ which is a full level $\ell$-structure, up to isomorphism of pairs,  i.e.\ isomorphisms of elliptic curves that preserve the level structure. 
A full level $\ell$-structure on a generalized elliptic curve $E$ over $S$ is a pair of points $(P_1, P_2)$, satisfying 
$P_1, P_2 \in E[\ell]$ and $e_\ell(P_1, P_2) = \zeta_\ell$ where $e_\ell$ is the Weil pairing on $E[\ell]$. Let us recall that a full level $\ell$-structure induces a symplectic pairing on $(\Z/\ell\Z)^2$ via $\left\langle (1,0),(0,1)\right\rangle=\zeta_\ell$.  For more details, see \cite{katzmaz} or \cite{gross}.\\

Let $G$ be a subgroup of $\GL_2(\Z/\ell\Z)$. We will denote as $X_G(\ell):=G\backslash X(\ell)$ the quotient of the modular curve $X(\ell)$ by the action of $G$ on the full level $\ell$-structure. The modular curve  $X_G(\ell)$ has a geometrically irreducible model over $\Q(\zeta_\ell)^{\det(G)}$, the subfield of  $\Q(\zeta_\ell)$ invariant under the action of ${\det(G)}$, see 
\cite[pp.$115-116$]{maz1} or \cite[IV, $3.20.4$]{derap}. 

In particular, if $G$ is the Borel subgroup, $X_G (\ell)$ is the modular curve $X_0 (\ell)$  over $\Q$. This modular curve parametrizes elliptic curves with a cyclic $\ell$-isogeny, that is, pairs $(E,C)$, where $E$ is a generalized elliptic curve and $C$ is the kernel of a cyclic $\ell$-isogeny, up to isomorphism. 

If $G$ is a split Cartan subgroup (respectively, the normalizer of a split Cartan subgroup) we will denote the modular curve $X_G(\ell):=X_{\spc}(\ell)$  (respectively, $X_{\spl}(\ell)$). The curve $X_{\spc}(\ell)$ (respectively, $X_{\spl}(\ell)$) pa\-ram\-e\-trizes elliptic curves endowed with an ordered
(respectively, unordered) pair of independent cyclic $\ell$-isogenies. 

Following \cite{maz1}, we will denote as $X_{A_4}(\ell)$ (respectively $X_{S_4}(\ell)$, $X_{A_5}(\ell)$) the modular curves obtained taking as 
$G\subset \GL_2(\Z/\ell\Z)$ the inverse image of $A_4\subset \PGL_2(\Z/\ell\Z)$ (respectively $S_4, A_5\subset \PGL_2(\Z/\ell\Z)$). Let us remark that exceptional projective images $A_4, S_4$ and $A_5$ can occur only for particular values of $\ell$, see \cite[section $2.5$, $2.6$]{serre72}. The modular curves $X_{A_4}(\ell)$ and $X_{A_5}(\ell)$ have geometrically irreducible models over the quadratic subfield of $\Q(\zeta_\ell)$. The same holds for $X_{S_4}(\ell)$ if $\ell\not\equiv \pm3$ mod $8$, otherwise the model is defined over $\Q$. 
\begin{remark}
Let $E/K$ be an elliptic curve that arises in an exceptional pair $(\ell , j(E))$ for the number field $K$. Let us suppose that the projective image of $\rho_{E,\ell}$ is dihedral. Hence, $(E,\rho_{E,\ell})$ corresponds to a $K$-rational point in $X_{\spl}(\ell)$ by Lemma~\ref{comb} and Corollary~\ref{spC}. Moreover, $E[\ell](\overline{K})$ contains two conjugate lines $L_1$ and $L_2$ over $L/K$, where $L/K$ is quadratic (Propositions~\ref{1.1} and \ref{1.11}). These lines correspond to the isogenies $\alpha: E\to E/L_1$ and $\beta: E \to E/L_2$ defined over $L$. Hence, they give a pair of $L$-rational points (taking respectively $\alpha\beta^\vee$ and $\beta\alpha^\vee$ as isogeny structure) on $X_0(\ell^2)$ which are conjugate by the Fricke involution $w_{\ell^2}$; for a definition see \cite[section $2$]{Parent}. Let us recall that there exists an isomorphism defined over $\Q$ between $X_0(\ell^2)$ and $X_{\spc}(\ell)$.
\label{cov}
\end{remark}

\begin{remark}\label{mod12}
If $(\ell , j(E))$ is an exceptional pair for the number field $K$ and $\sqrt{\left(\frac{-1}{\ell}\right)\ell}\notin K$ then the prime $\ell$ is congruent to $3 \bmod 4$ and hence to $7$ or $11 \bmod 12$, while if $\sqrt{\left(\frac{-1}{\ell}\right)\ell}\in K$ then the prime $\ell$ is congruent to $1 \bmod 4$ and hence to  $1$ or $5 \bmod 12$. 
\end{remark}

\subsection{The case $11 \leq \ell \leq \ell_K$}

\begin{thm}\label{fin}
Let $K$ be a  number field. If $\ell$ is a prime greater than $7$, then the number of exceptional pairs $( \ell , j(E))$ for $K$ is finite. 
\end{thm}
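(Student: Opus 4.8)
The plan is to reduce the finiteness of exceptional pairs with $\ell>7$ to the finiteness of rational points on certain modular curves of genus at least $2$, and then invoke Faltings' theorem. Concretely, by Corollary~\ref{maincor} the primes $\ell$ that can occur in an exceptional pair for a fixed $K$ are bounded, so there are only finitely many $\ell$ to consider; it therefore suffices to show that for each fixed prime $\ell > 7$ there are only finitely many $j$-invariants $j(E)$ with $(\ell,j(E))$ exceptional. By Remark~\ref{cov} (together with Lemma~\ref{comb}, Corollary~\ref{spC}, and Propositions~\ref{1.1} and~\ref{1.11}), every such $E$ — whether $\sqrt{\left(\frac{-1}{\ell}\right)\ell}$ lies in $K$ or not, and whether the projective image is dihedral or exceptional — gives rise to a $K$-rational point on $X_{\spl}(\ell)$ (in the dihedral case), or on one of the exceptional curves $X_{A_4}(\ell)$, $X_{S_4}(\ell)$, $X_{A_5}(\ell)$, which by Corollary~\ref{a4a5} occur only for $\ell$ in prescribed congruence classes and in any case only for finitely many $\ell$.

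First I would handle the dihedral case, which is the principal one. A point of $X_{\spl}(\ell)(K)$ coming from an exceptional pair is non-cuspidal and non-CM (the latter is discussed in Section~$6$; but even without that, CM $j$-invariants form a finite set, so they contribute only finitely many pairs). So it suffices to recall that the curve $X_{\spl}(\ell)$ has genus $\geq 2$ for every prime $\ell > 7$ — this is classical, going back to the genus computations in \cite{maz1} (and the genus is $\leq 1$ only for $\ell \in \{2,3,5,7,13\}$, with $\ell=13$ excluded here since $13 \equiv 1 \bmod 12$ would need separate treatment, but in fact $X_{\spl}(13)$ has genus $3$, so all $\ell>7$ are fine; alternatively one uses $X_{\spc}(\ell)\cong X_0(\ell^2)$ via Remark~\ref{cov} and the standard genus formula for $X_0(\ell^2)$). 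Since $X_{\spl}(\ell)$ is a smooth projective curve over a subfield of $\Q(\zeta_\ell)$, hence over a number field, Faltings' theorem gives that $X_{\spl}(\ell)(K)$ is finite, so only finitely many $j(E)$ arise.

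Next I would dispatch the exceptional-image cases. If the projective image of $\rho_{E,\ell}$ is $A_4$, $S_4$ or $A_5$, then by Corollary~\ref{a4a5} one has $\ell \equiv 1 \bmod 12$, $24$ or $60$ respectively; for each such $\ell$ (finitely many below $\ell_K$) the relevant modular curve $X_{A_4}(\ell)$, $X_{S_4}(\ell)$ or $X_{A_5}(\ell)$ again has genus $\geq 2$ for $\ell$ large enough, and the finitely many small $\ell$ where the genus could be $0$ or $1$ are handled by inspection (in those low-genus cases the exceptional image is known to occur only for very small $\ell$, and one can simply note that there are finitely many curves up to twist with a given small projective mod-$\ell$ image and fixed $j\notin\{0,1728\}$ — in fact the $j$-invariant is determined by the point on the modular curve). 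Applying Faltings again yields finiteness.

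The main obstacle is verifying the genus bound $g(X_{\spl}(\ell)) \geq 2$ (and the analogous statements for the exceptional curves) uniformly for all primes $\ell > 7$, including the borderline primes, and correctly identifying which small $\ell$ need ad hoc treatment; this is essentially bookkeeping with classical genus formulas (for $X_0(\ell^2)$ via the isomorphism with $X_{\spc}(\ell)$, and for $X_{\spl}(\ell)$ as a degree-$2$ quotient thereof), but it must be done carefully because the whole reduction to Faltings hinges on it. Everything else — the bound on $\ell$, the translation of exceptional pairs into modular points, and the finiteness of CM $j$-invariants — is already in place from the earlier sections.
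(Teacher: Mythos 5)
Your proposal is correct and follows essentially the same route as the paper: translate exceptional pairs into $K$-rational points on $X_{\spl}(\ell)$ (dihedral case) or on $X_{A_4}(\ell)$, $X_{S_4}(\ell)$, $X_{A_5}(\ell)$ (exceptional case, constrained by Corollary~\ref{a4a5}), check via the explicit genus formulas that these curves have genus at least $2$ for the relevant primes $\ell>7$, and conclude by Faltings. The only difference is that the paper writes out the genus formulas explicitly rather than treating them as bookkeeping, and it does not need the CM discussion, since finiteness of all $K$-points already suffices.
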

\begin{proof} Given an exceptional pair $( \ell , j(E))$ for the number field $K$ it corresponds to a $K$-rational point on one of the following modular curves:  $X_{\spl}(\ell)$, $X_{A_4}(\ell)$, $X_{S_4}(\ell)$ or $X_{A_5}(\ell)$, by Lemmas~\ref{comb} and~\ref{comb1}. Let us analyse each possible case. 

Let us recall that the genus of $X_{\spl}(\ell)$ is given by the following formula, see \cite[p. 117]{maz1}:
\[g(X_{\spl}(\ell))=\frac{1}{24}\left(\ell^2-8\ell+11-4\left(\frac{-3}{\ell}\right)\right).\]
Hence, if $\ell \equiv 1$ or $7 \bmod 12$, then $g(X_{\spl}(\ell))=\frac{1}{24}(\ell^2-8\ell+7)$. Otherwise, if $\ell \equiv 5$ or $11 \bmod 12$, then $g(X_{\spl}(\ell))=\frac{1}{24}(\ell^2-8\ell+15)$. Therefore, the modular curve $X_{\spl}(\ell)$ has genus greater than or equal to $2$ for $\ell\geq 11$, and has only finitely many $K$-rational points by Faltings Theorem.

Let us now study the modular curves $X_{A_4}(\ell)$, $X_{S_4}(\ell)$ and $X_{A_5}(\ell)$. The genus of these modular curves is given by the following formulae, see \cite[section $2$]{coha}:
\begin{align*}
&g(X_{A_4}(\ell))=\frac{1}{288}(\ell^3-6\ell^2-51\ell+294+18\epsilon_2+32\epsilon_3)\\
&g(X_{S_4}(\ell))=\frac{1}{576}(\ell^3-6\ell^2-87\ell+582+54\epsilon_2+32\epsilon_3)\\
&g(X_{A_5}(\ell))=\frac{1}{1440}(\ell^3-6\ell^2-171\ell+1446+90\epsilon_2+80\epsilon_3)
\end{align*}
where $\epsilon_2$ is equal to $1$ if $\ell \equiv 1 \bmod 4$ and to $-1$ if $\ell \equiv 3 \bmod 4$, and $\epsilon_3$ is equal to $1$ if $\ell \equiv 1 \bmod 3$ and to $-1$ if $\ell \equiv -1 \bmod 3$. 
We stress again that these exceptional cases occur only for certain values of $\ell$, see \cite[section $2.5$, $2.6$]{serre72}, and the formulae given will not be integral for general values of $\ell$, as already noticed in \cite[p.$3072$]{coha}. 
By Corollary~\ref{a4a5}, if an exceptional pair has projective image isomorphic to $A_4$ then 
$\ell \equiv 1 \bmod 12$ and we note that the genus of $X_{A_4}(\ell)$ is greater than $2$ for all $\ell\geq 13$. Similarly, for projective image isomorphic to $S_4$ or to $A_5$ the genus of the respective modular curves is larger than $2$ for primes satisfying the appropriate congruence.
\end{proof}

\subsection{The case $\ell =5$}
\label{case5}

Now we will study the local-global principle for $5$-isogenies. In order to do so it will be relevant to recall the structure of $X(5)$ at the cusps. 

The modular interpretation of $X(5)(\overline{\Q})$ associates to each cusp a N\'eron polygon $\mathcal{P}$ with $5$ sides. The N\'eron polygon is endowed with the structure of a generalized elliptic curve enhanced with a basis of $\mathcal{P}[5]\cong \mu_5 \times \Z/5\Z$, where $\mu_5$ is the set of $5$-th roots of unity, up to automorphisms of $\mathcal{P}$:
\begin{eqnarray*}
(\{\pm1\}\times \mu_5)\times \mathcal{P}[5] &\rightarrow & \mathcal{P}[5]\\
\left(\abcd{\epsilon}{\alpha}{0}{\epsilon} ,{w\choose {j}}\right) &\mapsto & {{w^\epsilon \alpha^j}\choose {\epsilon j}}
\end{eqnarray*}
where $\epsilon{\in}\{\pm1\}$ and $\alpha, w\in \mu_5$. The set of cusps of  $X(5)(\overline{\Q})$ is a Galois set with an action of $\GL_2(\Z/5\Z)$. The modular interpretation of $X_G(5)$ associates to each cusp an orbit of the enhanced N\'eron polygon under the action of $G$.

The local-global principle for $5$-isogenies is related to $V_4$, the Klein $4$-group. Let us recall that there is a unique non-trivial $2$-dimensional irreducible projective representation $\tau$ of $V_4$ in $\PGL_2(\F_5)$ and, up to conjugation, this representation is given by the image in $\PGL_2(\F_5)$ of the set:
\begin{equation}\left\{ {\begin{pmatrix} 1 & 0 \\ 0 & 1 \\\end{pmatrix}}, {\begin{pmatrix} 0 & -1 \\ 1 & 0 \\\end{pmatrix}},
{\begin{pmatrix} 1 & 0 \\ 0 & -1 \\\end{pmatrix}}, {\begin{pmatrix} 0 & 1 \\ 1 & 0 \\\end{pmatrix}}\right\}.
\label{set}
\end{equation}
For a prime $\ell\geq 5$, we denote as $X_{V_4}(\ell)$ the modular curves $X_G(\ell)$ obtained by taking as $G\subset \GL_2(\Z/\ell\Z)$ the inverse image of $V_4\subset \PGL_2(\Z/\ell\Z)$.

\begin{prop} \label{cusp}
Over $\spec(\Q(\sqrt{5}))$ the modular curve $X_{V_4}(5)$ is isomorphic to $\mathbb{P}^1$.
\end{prop}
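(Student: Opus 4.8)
The plan is to show that $X_{V_4}(5)$ is a curve of genus $0$ defined over $\Q(\sqrt{5})$ which possesses at least one $\Q(\sqrt{5})$-rational point; the trichotomy recalled above then forces it to be isomorphic to $\mathbb{P}^1$ over that field. First I would pin down the field of definition: for $G \subset \GL_2(\Z/5\Z)$ the inverse image of $V_4 \subset \PGL_2(\Z/5\Z)$, the determinant map sends the explicit set \eqref{set} (and hence its full preimage, after scaling by $\F_5^\ast$) to a subgroup of $\F_5^\ast$, and one checks that $\det(G) = (\F_5^\ast)^2$ is the index-$2$ subgroup of squares — the two matrices of determinant $-1$ in \eqref{set} are not in $\SL_2$, but $V_4$ lifts into $\SL_2(\F_5)/\{\pm1\}$, so actually $\det(G)=(\F_5^\ast)^2$. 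Since $\Q(\zeta_5)^{(\F_5^\ast)^2} = \Q(\sqrt 5)$, the general fact cited from \cite[pp.~115--116]{maz1} or \cite[IV, 3.20.4]{derap} gives that $X_{V_4}(5)$ has a geometrically irreducible model over $\Q(\sqrt 5)$.

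Next I would compute the genus. Geometrically $X_{V_4}(5) = G\backslash X(5)$ with $X(5)$ of genus $0$ and the covering $X(5) \to X_{V_4}(5)$ of degree $|G|/(\text{scalars}) = |V_4| = 4$ (after accounting for the action of $\pm 1$, which acts trivially on $X(5)$ — or more precisely the covering $X(5) \to X_{V_4}(5)$ has degree $4$ since $V_4$ acts on $X(5)$). Applying Riemann--Hurwitz to this degree-$4$ map and locating the ramification (over the elliptic points $j=0$, $j=1728$ and the cusps of the base), one finds $g(X_{V_4}(5)) = 0$. Alternatively, and more cheaply, one can observe that $X_{V_4}(5)$ sits between $X(5)$ and $X_{S_4}(5)$ or can be extracted from the known genus formulas; but the direct Riemann--Hurwitz computation with the ramification data is self-contained. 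I expect this genus computation — correctly bookkeeping the ramification of the degree-$4$ cover at the three special fibres — to be the main technical point, though it is entirely routine.

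Finally I would produce a rational point. Here the cusps are the natural source: by the modular description of the cusps of $X_G(5)$ recalled just before the statement — cusps correspond to $G$-orbits of enhanced N\'eron $5$-gons under the action \eqref{set} — I would exhibit a $G$-orbit of cusps whose stabilizer in $\Gal(\Qb/\Q(\sqrt5))$ is the whole group, i.e.\ a cusp rational over $\Q(\sqrt 5)$. Since the Galois action on the set of cusps of $X(5)$ factors through $\GL_2(\Z/5\Z)$ via its action on full level structures, and $\Gal(\Q(\zeta_5)/\Q(\sqrt 5))$ maps into $(\F_5^\ast)^2 \subset \GL_2(\Z/5\Z)$ acting by scalars, one checks that the orbit under the diagonal scalars $\{\pm 1\}$ coincides with (or is contained in) a $G$-orbit, so at least one cusp of $X_{V_4}(5)$ is $\Q(\sqrt5)$-rational. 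With a genus-$0$ curve over $\Q(\sqrt5)$ carrying a rational point, the curve is isomorphic to $\mathbb{P}^1$ over $\Q(\sqrt5)$, which is the claim.
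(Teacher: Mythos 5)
Your overall strategy coincides with the paper's: genus $0$, field of constants $\Q(\sqrt5)$ via $\det(G)=(\F_5^\ast)^2$, a rational cusp, and then the genus-$0$ trichotomy. The determinant computation is right. A small remark first: no Riemann--Hurwitz bookkeeping is needed for the genus. Since $X(5)$ has genus $0$ and $X_{V_4}(5)$ receives a nonconstant map from it, $X_{V_4}(5)$ has genus $0$ automatically (L\"uroth); this is all the paper invokes. (Your proposed ramification locus for the degree-$4$ quotient map is also misplaced --- $j=0,1728$ and the cusps are special fibres of the map to $X(1)$, not of the quotient by $V_4$ --- but the point is moot.)

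The genuine problem is in the step that produces the rational point. You assert that $\Gal(\Q(\zeta_5)/\Q(\sqrt5))$ acts on the cusps of $X(5)$ through scalar matrices in $(\F_5^\ast)^2$, and conclude that a $G$-orbit of cusps is Galois-stable because scalars act trivially. The Galois action on cusps is not scalar. A cusp is the class of a level structure on a N\'eron $5$-gon $\mathcal{P}$ with $\mathcal{P}[5]\cong\mu_5\times\Z/5\Z$, and an element $\sigma$ with cyclotomic character $\chi(\sigma)=c$ acts by $c$ on the $\mu_5$-factor and trivially on the $\Z/5\Z$-factor, i.e.\ through $\abcd{c}{0}{0}{1}$ rather than $c\cdot\Id$. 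In particular $\sigma=-1\in\Gal(\Q(\zeta_5)/\Q(\sqrt5))$ acts by $\abcd{-1}{0}{0}{1}$, which genuinely moves the cusps of $X(5)$: the paper computes that it sends $((\zeta_5,0),(1,1))$ to $((\zeta_5^{-1},0),(1,1))$, a different cusp of $X(5)$. If your claim were correct, every cusp of every $X_G(5)$ with $-\Id\in G$ would be $\Q(\sqrt5)$-rational, which is false. What rescues the argument for this particular $G$ is that $\abcd{-1}{0}{0}{1}$ happens to lie in $G$ (it is $-1$ times the third matrix of the set (\ref{set})), so the Galois translate of the chosen cusp lies in the same $G$-orbit and the corresponding point of $X_{V_4}(5)$ is indeed $\Q(\sqrt5)$-rational. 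Your write-up must route the rationality through this membership, not through a purported triviality of the Galois action.
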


\begin{proof}The genus of $X(5)$ over $\Q(\zeta_5)$ is $0$.  
The field of constants of  $X_{V_4}(5)$ is  $\Q(\zeta_5)^{\det(G)}$ where $G$ is the inverse image of $V_4$ in $\GL_2(\F_5)$. Since $V_4 \subset \SL_2(\F_5)/\{\pm1\}$ and $\F_5^\ast\subset G$, then $\det(G)=(\F_5^\ast)^2$. This means that $X_{V_4}(5)$ is geometrically irreducible over $\Q(\sqrt{5})$ and its genus is $0$. 

The set of cusps of $X(5)(\overline{\Q})$ is in $1{-}1$ correspondence with the quotient of the group of isomorphisms as $\F_5$-vector spaces between $\F_5^2$ and $\mu_5\times\F_5$ by the action of $\{\pm1\}\times \mu_5$. To show that over $\spec(\Q(\sqrt{5}))$ the modular curve $X_{V_4}(5)$ is isomorphic to $\mathbb{P}^1$ it is enough to show that the set of  $\Q(\sqrt{5})$-rational points of $X_{V_4}(5)$ is non-empty.

Let $\phi_{\zeta_5}:\F_5^2\stackrel{\sim}{\rightarrow}\mu_5\times\F_5$ be the isomorphism given by $\phi_{\zeta_5}((1,0))=(\zeta_5,0)$, $\phi_{\zeta_5}((0,1))=(1,1)$. The orbit of the cusp corresponding to $\phi_{\zeta_5}$ under the action of $\{\pm1\}\times \mu_5$ is the following set:
\begin{eqnarray*} 
&&\{((\zeta_5,0),(1,1)),((\zeta_5^{-1},0),(1,-1)),((\zeta_5,0),(\zeta_5,1)),((\zeta_5^{-1},0),(\zeta_5^{-1},-1)),\\ &&((\zeta_5,0),(\zeta_5^2,1)),((\zeta_5^{-1},0),(\zeta_5^{-2},-1)),((\zeta_5,0),(\zeta_5^{-2},1)),\\
&&((\zeta_5^{-1},0),(\zeta_5^{2},-1)),((\zeta_5,0),(\zeta_5^{-1},1)),((\zeta_5^{-1},0),(\zeta_5,-1))\}.
\end{eqnarray*} 

On the set of cusps we have a Galois action by $\Gal(\Q(\zeta_5)/\Q)$. In particular, acting with $-1\in\Gal(\Q(\zeta_5)/\Q)$ on the cusp $((\zeta_5,0),(1,1))$ we obtain the cusp $((\zeta_5^{-1},0),(1,1))$ which does not define the same cusp on $X(5)({\overline{\Q}})$. 

However, the action of $-1\in\Gal(\Q(\zeta_5)/\Q)$ on the class of the cusp $((\zeta_5,0),(1,1))$ preserve the orbit of the cusp under $G$: in fact $G$, the inverse image of $V_4$ in $\GL_2(\F_5)$, is  the group 
\[\left\{\abcd{x}{0}{0}{\pm x},\abcd{0}{\pm x}{x}{0} :x\in\F_5^\ast\right\},\] and under the action of $\abcd{-1}{0}{0}{1}\in G$ the cusp $((\zeta_5^{-1},0),(1,1))$ of $X(5)(\overline{\Q})$ is mapped to $((\zeta_5,0),(1,1))$. 

It follows that the cusp $((\zeta_5,0),(1,1))$ in $X_{V_4}(5)(\overline{\Q})$ is stable under $\Gal(\Q(\zeta_5)/\Q(\sqrt{5}))$. This implies that $X_{V_4}(5)(\Q(\sqrt{5}))$ is non-empty.
\end{proof}

\begin{remark}The argument given above shows that  $X_{V_4}(5)(\Q(\sqrt{5}))$  is non-empty (and therefore infinite) by proving that there exist a $\Q(\sqrt{5})$-rational cusp. As an alternative, it is enough to  exhibit a $\Q(\sqrt{5})$-rational point on $X_{V_4}(5)$. Over $\Q$, the mod-$5$ Galois image of the elliptic curve with LMFDB label $608.$e$1$\footnote{see \cite{lmfdb:ellQ}, Cremona label $608$b$1$}:
\[E: y^2 = x^3 - 56x + 4848\]
is equal to the normalizer of the split Cartan in $\GL_2(\F_5)$:  up to conjugacy, the normalizer of the split Cartan is the only subgroup of $\GL_2(\F_5)$ with order $32$ and $[\Q(E[5]) : \Q] = 32$. Changing the field of definition of $E$ to  $\Q(\sqrt{5})$ reduces the Galois image to the index-$2$ subgroup of the normalizer of the split Cartan with square determinants. The image of this subgroup in $\PGL_2(\F_5)$ is isomorphic to $V_4$, therefore $j(E)$ corresponds to a $\Q(\sqrt{5})$-rational point on $X_{V_4}(5)$.
 Through the LMFDB database, see \cite{lmfdb} and in particular the search engine  \cite{lmfdb:ellQ}, it is possible to find lots of examples of elliptic curves whose mod-$5$ Galois image is equal to the normalizer of the split Cartan in $\GL_2(\F_5)$, such as the elliptic curves with  LMFDB labels $121.$b$2$ and $1216.$h$1$.
\end{remark}
\begin{cor} 
There exist infinitely many exceptional pairs $(5 , j(E))$ for the number field $K$ if and only if $\sqrt{5}$ belongs to $K$.
\label{five}
\end{cor}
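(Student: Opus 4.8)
The plan is to prove the two directions separately. The implication ``infinitely many exceptional pairs for $K$ with $\ell=5$ $\Longrightarrow$ $\sqrt 5\in K$'' is immediate from Proposition~\ref{small}: if $\sqrt 5\notin K$ there is no exceptional pair at all with $\ell=5$, so in particular not infinitely many.

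For the converse, suppose $\sqrt 5\in K$. Since $5\equiv 1\bmod 4$ we have $\sqrt 5=\sqrt{\left(\frac{-1}{5}\right)5}$, so by Remark~\ref{sqr} the projective mod-$5$ image of any curve occurring in an exceptional pair for $K$ lies in $\SL_2(\F_5)/\{\pm 1\}$ and we are in the dihedral situation of Lemma~\ref{comb1}. By Proposition~\ref{cusp}, $X_{V_4}(5)$ is isomorphic to $\mathbb{P}^1$ over $\Q(\sqrt 5)\subseteq K$, so $X_{V_4}(5)(K)\cong\mathbb{P}^1(K)$ is infinite. The strategy is to show that all but a thin subset of these $K$-points produce a genuine exceptional pair. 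Discarding the finitely many cusps and the finitely many points over $j\in\{0,1728\}$ (the morphism $X_{V_4}(5)\to X(1)$ being finite), to a remaining non-cuspidal $K$-point $P$ one attaches, by the standard correspondence between non-cuspidal points and elliptic curves with prescribed projective image used in the previous sections, an elliptic curve $E_P/K$ with $j(E_P)=j(P)$ and projective mod-$5$ image $H_P$ conjugate into $V_4$ (the quadratic twist chosen being irrelevant to the projective image, as recalled in the Introduction). One checks on the explicit model \eqref{set} that every element of $V_4$ fixes a point of $\mathbb{P}^1(\F_5)$ --- the three nontrivial elements are involutions, diagonalizable over $\F_5$ because $-1$ is a square mod $5$ --- whereas $V_4$ has no common fixed point; hence, by the Chebotarev argument in the proof of Theorem~\ref{suth}, $E_P$ admits a $5$-isogeny locally at a density-one set of primes, and admits one over $K$ precisely when $H_P$ is contained in a point-stabilizer, that is, precisely when $H_P$ is a \emph{proper} subgroup of $V_4$.

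It remains to rule out $H_P\subsetneq V_4$ for infinitely many $P$. The proper subgroups of $V_4$ are the trivial one and its three order-$2$ subgroups $\bar G_1,\bar G_2,\bar G_3$; if $H_P$ lies in $\bar G_i$ then $E_P$ gives a $K$-rational point of $X_{G_i}(5)$ lying over $P$, where $G_i\subset\GL_2(\F_5)$ is the inverse image of $\bar G_i$ and the natural morphism $X_{G_i}(5)\to X_{V_4}(5)$ has degree $2$. A direct determinant computation gives $\det G_i=(\F_5^\ast)^2$ (again because $-1$ is a square), so the field of constants $\Q(\zeta_5)^{\det G_i}$ of $X_{G_i}(5)$ equals $\Q(\sqrt 5)$ and $X_{G_i}(5)$ is geometrically irreducible over $\Q(\sqrt 5)\subseteq K$; therefore the image of $X_{G_i}(5)(K)$ inside $X_{V_4}(5)(K)\cong\mathbb{P}^1(K)$ is a thin set. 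Consequently the set of $K$-points of $X_{V_4}(5)$ failing to give an exceptional pair lies in the union of a finite set and three thin sets, hence is thin, and Hilbert's irreducibility theorem --- to the effect that $\mathbb{P}^1(K)$ is not thin --- leaves infinitely many good points. Since $X_{V_4}(5)\to X(1)$ has finite fibres, these carry infinitely many distinct $j$-invariants, giving infinitely many exceptional pairs $(5,j(E))$ for $K$. I expect the main obstacle to be exactly this last step: isolating the thin ``degenerate'' locus on $X_{V_4}(5)$ where $H_P$ drops to a proper subgroup of $V_4$ (equivalently, where $E_P$ acquires a $K$-rational $5$-isogeny), which forces the geometric-irreducibility bookkeeping above; the routine point that a non-cuspidal $K$-point with $j\neq 0,1728$ genuinely lifts to an elliptic curve over $K$ with the stated projective image is handled as elsewhere in the paper.
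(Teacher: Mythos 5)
Your proof is correct and follows the same route as the paper: the forward direction via Proposition~\ref{small}, and the converse by combining Proposition~\ref{cusp} (so that $X_{V_4}(5)(K)\supseteq X_{V_4}(5)(\Q(\sqrt5))$ is infinite) with the observation that the model (\ref{set}) of $V_4$ has no common fixed point on $\mathbb{P}^1(\F_5)$ although each element of its preimage in $\GL_2(\F_5)$ is diagonalizable over $\F_5$ (because $-1$ is a square mod $5$). The one place where you diverge is the step you yourself flag as the main obstacle: the paper simply asserts that every non-cuspidal $K$-point of $X_{V_4}(5)$ yields a curve with projective image \emph{equal} to $V_4$, whereas a moduli point a priori only forces the image to be conjugate into $V_4$, and a proper (necessarily cyclic, diagonalizable) subgroup would fix a point of $\mathbb{P}^1(\F_5)$ and hence give a global $5$-isogeny over $K$ rather than an exceptional pair. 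Your treatment of this degenerate locus --- the three index-$2$ covers $X_{G_i}(5)\to X_{V_4}(5)$, the determinant computation $\det G_i=(\F_5^\ast)^2$ giving geometric irreducibility over $\Q(\sqrt5)$, and Hilbert irreducibility to conclude that the bad points form a thin subset of $\mathbb{P}^1(K)$ --- is sound and supplies a justification that the paper leaves implicit. So your argument is, if anything, more complete than the one in the text; the only cost is the appeal to Hilbert's irreducibility theorem, which the paper avoids by (tacitly) restricting attention to the generic points of $X_{V_4}(5)$.
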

\begin{proof} By Proposition~\ref{small}, there is an exceptional pair $(5 , j(E))$ for the number field $K$ only if $\sqrt{5}$ belongs to $K$.

 If $(5 , j(E))$ is an exceptional pair  for the number field $K$ then the projective image of the Galois representation associated to the elliptic curve $E$ over $K$ is a dihedral group of order dividing $8$ (Lemma~\ref{comb1} combined with Corollary~\ref{a4a5}). 
If $\sqrt{5}$ belongs to $K$, each of the infinitely many (Proposition~\ref{cusp}) non-cuspidal points on  $X_{V_4}(5)(K)$ corresponds to the isomorphism class of an elliptic curve $E$ whose Galois image in $\PGL2(\F_5)$ is isomorphic to $V_4$. Every group $G\subset \GL_2(\F_5)$ with image $V_4$ in $\PGL_2(\F_5)$ has the property that each of its elements fixes a line in $\F_{25}$ but $G$ does not, for
example, no line in $\F_{25}$ is fixed by both the second and third matrices in the set (\ref{set}). Therefore all the pairs $(5, j(E))$ are exceptional. \end{proof}
%
%
%
%

\begin{remark} In \cite{barcre}, it  is given an explicit formulafor the map from $X_{V_4}(5)$ to $X(1)$ and, hence, an explicit parametrization of the $j$\--invariants of the exceptional pairs $(5, j(E))$.
\end{remark}

\subsection{The case $\ell =7$}

The local-global principle for $7$-isogenies leads us to a dichotomy between a finite and an infinite number of counterexamples according to the rank of a specific elliptic curve that we call the Elkies-Sutherland curve: 
 
\begin{prop}\label{7}
If $\ell=7$ then the number of exceptional pairs $( 7 , j(E))$ for a number field $K$, is finite or infinite, depending whether the rank of the elliptic curve 
\[E': y^2=x^3-1715x+33614\] is zero or non-zero, respectively.
\end{prop}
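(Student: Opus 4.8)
The plan is to translate the statement about exceptional pairs $(7, j(E))$ into a statement about rational points on a modular curve, identify that curve, determine its genus and its $\Q$-isogeny class, and then invoke the appropriate finiteness (Faltings) or infiniteness (Mordell--Weil rank) dichotomy. By Lemma~\ref{comb} and Corollary~\ref{a4a5}, an exceptional pair with $\ell=7$ must have $\ell\equiv 3\bmod 4$ (so the split-Cartan case of Lemma~\ref{comb} applies; the exceptional images $A_4,S_4,A_5$ are excluded since $7\not\equiv 1\bmod 4$), and hence the projective image of $\rho_{E,7}$ is dihedral of order $2n$ with $n>1$ an odd divisor of $(\ell-1)/2=3$, i.e.\ $n=3$. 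Thus the image in $\PGL_2(\F_7)$ is $S_3$, and $(E,\rho_{E,7})$ gives a $K$-rational point on $X_{\spl}(7)$, the modular curve attached to the normalizer of a split Cartan. So the first step is: exceptional pairs $(7,j(E))$ for $K$ correspond to non-cuspidal, non-CM $K$-rational points of $X_{\spl}(7)$ (one must check that cusps and the CM points genuinely do not yield exceptional pairs, as in Sutherland's treatment over $\Q$, but over a general $K$ the relevant statement is that all but finitely many points give exceptional pairs).

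The second step is to compute the genus of $X_{\spl}(7)$: by the formula recalled in the proof of Theorem~\ref{fin}, since $7\equiv 7\bmod 12$, we get $g(X_{\spl}(7))=\tfrac1{24}(49-56+7)=0$. So $X_{\spl}(7)$ has genus zero, and being defined over $\Q$ it is isomorphic over $\Q$ to $\P^1$ provided it has a rational point (e.g.\ a rational cusp or a CM point), which it does. Therefore $X_{\spl}(7)(K)$ is infinite for every number field $K$ — so a naive genus argument does not settle the question, and this is where the Elkies--Sutherland curve must enter. The key point is that $X_{\spl}(7)$ being rational does not mean every rational point gives an \emph{exceptional} pair: one must descend to the curve that records the extra datum needed for exceptionality, namely that the two isogenies are conjugate over a quadratic extension carrying the right determinant/Weil-pairing constraint. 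Concretely, by Remark~\ref{cov} the pair of conjugate lines gives an $L$-rational point on $X_{\spc}(7)\cong X_0(49)$ swapped by the Fricke involution $w_{49}$; running the descent the other way, the condition that $\Gamma\subseteq(\F_7^*)^2$ (Proposition~\ref{ima}) cuts out a subcover, and the relevant object turns out to be (an elliptic curve isogenous to) $E'\colon y^2=x^3-1715x+33614$. So the third step is to exhibit the moduli-theoretic map realizing this: following Sutherland (who attributes the curve to Elkies), there is a $\Q$-morphism from a genus-one cover $C\to X_{\spl}(7)$ — $C$ being the curve parametrizing an exceptional pair together with the choice of the size-$2$ $G$-orbit — and $C$ is $\Q$-isomorphic to $E'$; the non-cuspidal, non-CM points of $C(K)$ are exactly (up to the finitely many exceptions) the exceptional pairs $(7,j(E))$ for $K$.

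The fourth and final step is then the trichotomy applied to $C\cong E'$: since $E'$ is an elliptic curve over $\Q$ (hence over $K$), the Mordell--Weil theorem gives $E'(K)\cong T\oplus\Z^r$. If $r=0$ then $E'(K)$ is finite, so there are only finitely many exceptional pairs $(7,j(E))$ for $K$; if $r\geq 1$ then $E'(K)$ is infinite, and since only finitely many of its points are cuspidal or CM or otherwise degenerate, infinitely many give exceptional pairs. I expect the main obstacle — and the part requiring the most care — to be the precise identification of the moduli interpretation of $E'$ and the verification that the map $C\to X_{\spl}(7)$ (or equivalently the map to $X(1)$ giving the $j$-invariant) is defined over $\Q$ with the claimed properties, i.e.\ that a $K$-point of $E'$ away from an explicit finite bad locus really does produce an elliptic curve $E/K$ with projective mod-$7$ image $S_3$ admitting a $7$-isogeny over a quadratic extension but not over $K$. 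This is exactly the content extracted from \cite{suth} (Elkies's parametrization), so the cleanest route is to cite that parametrization, note that it is defined over $\Q$, and then run the genus-one trichotomy; the only genuinely new ingredient over the $K=\Q$ case is replacing "$E'(\Q)$ has rank $0$" (a verified fact) with the rank of $E'$ over the general field $K$, which is why the statement is conditional.
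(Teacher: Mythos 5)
Your proposal is correct and follows essentially the same route as the paper: both reduce the problem to $K$-rational points on the twist of $X_0(49)\cong X_{\spc}(7)$ by $\Gal(K(\sqrt{-7})/K)$ and the Fricke involution $w_{49}$, identify that twist with the Elkies--Sutherland curve $E'$ via the explicit computation cited from Sutherland/Elkies, and conclude by the Mordell--Weil trichotomy for $E'(K)$. Your extra observations --- that $X_{\spl}(7)$ itself has genus $0$ so a naive genus argument fails, and that finitely many cuspidal/CM/degenerate points must be discarded --- are consistent with, and slightly more explicit than, the paper's treatment.
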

\begin{proof} If $\sqrt{-7}\in K$ then by Lemma~\ref{comb1} there is no exceptional pair. Let us suppose that $\sqrt{-7}$ is not in $K$. As shown by Sutherland in 
\cite[Section $3$]{suth} and explained in Remark~\ref{cov}, the modular curve of interest is the twist of $X_0(49)$ by $\Gal(K(\sqrt{-7})/K)$ with respect to $w_{49}$, the Fricke involution on $X_0(49)$. 
As explained in \cite[Section $3$]{suth}, computations of Elkies imply that this modular curve is isomorphic to $E'$, thus each $K(\sqrt{-7})$\--rational point on $E'$ gives rise to an exceptional pair $( 7 , j(E))$. Explicitly, if the $K(\sqrt{-7})$\--rational point of $E'$ has coordinates $(u,v)$, let $t=(7u-v+343)/2v$, then the $j$-invariant of the isomorphism class of elliptic curves which are exceptional for the local-global principle for $7$-isogenies is equal to 
\[\frac{ -(t-3)^3(t-2)(t^2+t-5)^3(t^2+t+2)^3(t^4-3t^3+2t^2+3t+1)^3}{(t^3-2t^2-t+1)^7}.\]
Hence, if the rank of $E'$ over $K$ is positive there are infinitely many counterexamples to the local-global principle about $7$-isogenies, while if the rank is $0$ there are only finitely many.
\end{proof}

\begin{remark} As shown by Sutherland in \cite[Section $3$]{suth}, over $\Q(i)$ the curve $E$ has positive rank, so there are infinitely many counterexamples over this field to the local-global principle for $7$-isogenies.
\end{remark}

The proof of our Main Theorem is now complete.

\section{Complex multiplication}
Sutherland in \cite{suth} proved that an exceptional pair $(\ell , j(E))$ over $\Q$ cannot have complex multiplication: for $\ell>7$ we refer to the proof of \cite[Theorem $2$]{suth} and for $\ell=7$ we refer to the direct computations in \cite[Section $3$]{suth}. Here we study the same problem for $K$ a number field.

\begin{lem} \label{cm}Let $K$ be a number field of degree $d$ over $\Q$ and let $E/K$ be an elliptic curve over $K$ with $j(E)\notin \{0, 1728\}$. Let $(\ell , j(E))$ be an exceptional pair for $K$. If $\ell> 2d{+}1$ then $E$ does not have complex multiplication. 

\end{lem}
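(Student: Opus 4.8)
The plan is to combine the structural constraints on the projective image $H$ of $\rho_{E,\ell}$ coming from the earlier lemmas with the well-known description of the mod-$\ell$ image of a CM elliptic curve. Suppose, for contradiction, that $E/K$ has complex multiplication by an order $\mathcal{O}$ in an imaginary quadratic field $F$, and that $(\ell,j(E))$ is exceptional. By Proposition~\ref{small} we may assume $\ell\geq 7$ (the cases $\ell=2,3$ give no exceptional pair at all, and $\ell=5$ is covered by $\ell>2d+1$ forcing $d=1$, where CM is already excluded by Sutherland). First I would recall that after base change to the compositum $K F$ — which has degree at most $2d$ over $\Q$ — the mod-$\ell$ image lands in a Cartan subgroup (split if $\ell$ splits in $F$, non-split otherwise): indeed $\mathcal{O}/\ell\mathcal{O}$ acts on $E[\ell]$ through $\F_\ell\times\F_\ell$ or $\F_{\ell^2}$, and the image is conjugate into the corresponding Cartan. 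Hence over $K F$ the projective image is \emph{cyclic}.

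Next I would exploit that the projective image $H$ over $K$ is, by Lemma~\ref{comb} and Lemma~\ref{comb1} (applied according to whether $\sqrt{\left(\frac{-1}{\ell}\right)\ell}\in K$), either dihedral of order $2n$ with $n>1$, or one of $A_4,S_4,A_5$. A CM curve cannot have exceptional projective image: over $KF$ the projective image is cyclic, so over $K$ it has a cyclic subgroup of index dividing $[KF:K]\le 2$, which is incompatible with $A_4,S_4,A_5$. So $H$ is dihedral of order $2n$ with $n>1$. The key point is then a size estimate: the cyclic subgroup $H'$ of $H$ of index $2$ has order $n>1$, and restricting $\rho_{E,\ell}$ to $\Gal(\overline{\Q}/KF)$ gives a subgroup whose projective image is cyclic and of index at most $2$ in $H$, hence of order at least $n/2$ if $n$ is even, or exactly $n$ if $n$ is odd — in either case at least $\lceil n/2\rceil$. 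But this projective image, being the image of $\Gal(\overline{\Q}/KF)$ inside $\PGL_2(\F_\ell)$ for a CM curve over $KF$, is cyclic of order dividing... — and here is where the degree bound enters.

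The main obstacle, and the crux of the argument, is to show that a cyclic projective image of a CM curve over a field of degree $D$ over $\Q$ cannot be too large relative to $D$; concretely I expect to argue that $n \leq$ (something like $2[KF:\Q]$), forcing $n$ to be small, and then to derive a contradiction with $\ell > 2d+1$. The mechanism should be: the field $K F(E[\ell])/KF$ has Galois group contained in a Cartan subgroup, so it contains $KF(\zeta_\ell)$, whose degree over $\Q$ is divisible by $(\ell-1)/2$ (since the CM field and the cyclotomic field interact in a controlled way, and $[\Q(\zeta_\ell):\Q]=\ell-1$). Because $[KF:\Q]\leq 2d$ and $K F$ contains $\zeta_\ell$ only if $\ell-1 \mid 2d\cdot(\text{something})$, one gets a divisibility forcing $\ell-1 \le$ a multiple of $2d$; more carefully, the dihedral projective image over $K$ means $\Gal(KF/K)$ acts on the cyclic projective image over $KF$ by inversion, and the order $n$ of that cyclic group must divide $\ell-1$ (for split) or $\ell+1$ (for non-split), while the field of definition of the corresponding $n$ cyclic isogenies has degree $\geq n$ over $KF$ hence... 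I would chase this until $n \leq 2d$, say, and combine with the fact that an exceptional pair forces (via the Weil pairing and the shape of inertia at $\ell$, as in the proof of Theorem~\ref{t4}) a lower bound like $n \geq (\ell-1)/(2[KF:\Q])$ or directly $\ell - 1 \leq 2 d n \le $ a bounded quantity, yielding $\ell \le 2d+1$, the desired contradiction. Thus $\ell > 2d+1$ rules out CM.

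Summarizing the order of steps: (i) reduce to $\ell \ge 7$ and to $H$ dihedral of order $2n$, $n>1$, using Proposition~\ref{small}, Lemma~\ref{comb}, Lemma~\ref{comb1} and eliminating the exceptional cases via the cyclic-over-$KF$ observation; (ii) base-change to $KF$ where the image is Cartan and the projective image cyclic of order a multiple of $\lceil n/2\rceil$, and where $K F(\zeta_\ell) \subseteq K F(E[\ell])$ forces divisibility relations among $n$, $\ell\pm1$ and $[KF:\Q]\le 2d$; (iii) convert these into the numerical inequality $\ell \le 2d+1$; (iv) conclude. I expect step (ii)–(iii) — pinning down exactly how large the cyclic projective image must be and how that translates into a clean bound on $\ell$ in terms of $d$ — to be the delicate part, requiring care about the split versus non-split case and about whether $\zeta_\ell \in K F$.
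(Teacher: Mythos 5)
Your step (i) is fine, but the argument never closes: the entire content of the lemma sits in your steps (ii)--(iii), and the mechanism you propose there does not produce the required inequality. Two separate problems. First, the containment $KF(\zeta_\ell)\subseteq KF(E[\ell])$ bounds from below the image of $\Gal(\overline{\Q}/KF)$ in $\GL_2(\F_\ell)$ (via the determinant), not its image in $\PGL_2(\F_\ell)$: scalars are killed in the projectivization, so you get no lower bound on $n$ this way. Second, and more seriously, there is no upper bound of the form $n\leq 2d$ to be had: $n$ is only constrained to divide $(\ell-1)/2$ (or $\ell+1$ in the non-split case) and is typically of that size for a CM curve; and your heuristic that ``the field of definition of the $n$ cyclic isogenies has degree $\geq n$ over $KF$'' points the wrong way --- for a dihedral projective image inside the normalizer of a split Cartan, the two relevant isogenies are already defined over the quadratic extension $L/K$ no matter how large $n$ is. So the chain ``$n\leq 2d$ and $n\geq(\ell-1)/(\text{const}\cdot d)$, hence $\ell\leq 2d+1$'' has neither of its two halves, and I do not see how to supply the upper one from the mod-$\ell$ image alone.

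The paper's proof uses a different and genuinely arithmetic input that your sketch is missing: the \emph{isogenous} curve. By Proposition~\ref{1.1} there is an $\ell$-isogenous curve $E'$ defined over the quadratic extension $L$ of $K$ but not over $K$; it has CM by an order $\O'$, and by \cite[Theorem~7.24]{cox} the class numbers of the two endomorphism orders of $\ell$-isogenous CM curves satisfy $h(\O')/h(\O)=\bigl(\ell-\bigl(\tfrac{\disc(\O)}{\ell}\bigr)\bigr)/[\O^\ast:\O'^\ast]\geq \ell-1$ (the unit index is $1$ because $j(E)\notin\{0,1728\}$). On the other hand $h(\O')=[\Q(j_{E'}):\Q]\leq[L:\Q]=2d$, so $\ell-1\leq 2d$, contradicting $\ell>2d+1$. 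In other words, the upper bound you are missing is the degree of the $j$-invariant of the isogenous curve, controlled by its class number --- not the size of the projective mod-$\ell$ image. As written, your proposal has a genuine gap at its central step.
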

\begin{proof}  
Assume that $E$ has complex multiplication by a quadratic order $\O$. This means that the Galois representation $\rho_{E,\ell}$ has image included in a Borel, when $\ell$ ramifies in $\O$, or is projectively dihedral (split or nonsplit, according to $\ell$ being split or inert in $\O$), see \cite[Th\'eor\`eme $5$]{serre66}. 
The Borel case is clearly not possible.
By Proposition~\ref{1.1}, there is an $\ell$-isogenous elliptic curve $E'$ that is defined over a quadratic extension $L$ of $K$, but not over $K$, since we are in an exceptional case. Since $E'$ is isogenous to $E$ over $L$, it also must have complex multiplication by an order $\O'$. Since $E$ and $E'$ are $\ell$-isogenous, by \cite[Theorem $7.24$]{cox}, the ratio between the class numbers  $h(\O')$ and $h(\O)$ satisfies
\[\frac{h(\O')}{h(\O)}=\frac{1}{\left[\O^\ast : \O'^\ast\right]}\left(\ell  - \left(\frac{\disc(\O)}{\ell}\right)\right)\geq (\ell{-}1),\]
since $j(E)\notin \{0, 1728\}$ and therefore $\left[\O^\ast : \O'^\ast\right]=1$. In particular, if ${h(\O')}={h(\O)}$ we have a contradiction. Hence assume ${h(\O')}>{h(\O)}$.
Since $E'$ is defined over $L$ and $E$ is defined over $K$, we know that~$\Q (j_E )\subseteq K$ and~$\Q (j_{E'} )\subseteq L$. Then the ratio of the class numbers  $h(\O')$ and $h(\O)$ satisfies: 
\[\frac{h(\O')}{h(\O)}\leq \left[L:\Q\right]=2d.\]
Hence, if $\ell> 2d+1$, we  have a contradiction between the lower and the upper bound, so $E$ cannot have complex multiplication.
\end{proof}
\section*{Acknowledgements}
I would like to thank my advisor Pierre Parent, for his advice, his guidance and patience throughout the many readings and corrections of this article. I also am very grateful to my advisor Bas Edixhoven, for all the fruitful discussions about this topic. I would also like to thank the referee for the thorough, constructive and helpful comments and suggestions on the manuscript.

\bibliographystyle{alpha}
\footnotesize
\bibliography{biblio}

\begin{thebibliography}{{LMF}13b}

\bibitem[BC13]{barcre}
Barinder~Singh Banwait and John Cremona.
\newblock Tetrahedral elliptic curves and the local-to-global principle for
  isogenies.
\newblock \url{http://arxiv.org/pdf/1306.6818v2.pdf}, 2013.

\bibitem[BK75]{biku}
Bryan~J. Birch and Willem Kuyk.
\newblock {\em Modular functions of one variable. {IV}}.
\newblock Lecture Notes in Mathematics, Vol. 476. Springer-Verlag, Berlin,
  1975.

\bibitem[CH05]{coha}
Alina~C. Cojocaru and Chris Hall.
\newblock Uniform results for {S}erre's theorem for elliptic curves.
\newblock {\em International Mathematics Research Notices}, (50):3065--3080,
  2005.

\bibitem[Cox89]{cox}
David~A. Cox.
\newblock {\em Primes of the form {$x\sp 2 + ny\sp 2$}}.
\newblock A Wiley-Interscience Publication. John Wiley \& Sons Inc., New York,
  1989.
\newblock Fermat, class field theory and complex multiplication.

\bibitem[Dic58]{di}
Leonard~E. Dickson.
\newblock {\em Linear groups: {W}ith an exposition of the {G}alois field
  theory}.
\newblock with an introduction by W. Magnus. Dover Publications Inc., New York,
  1958.

\bibitem[DR73]{derap}
Pierre Deligne and Michael Rapoport.
\newblock Les sch\'emas de modules de courbes elliptiques.
\newblock In {\em Modular functions of one variable, {II} ({P}roc. {I}nternat.
  {S}ummer {S}chool, {U}niv. {A}ntwerp, {A}ntwerp, 1972)}, pages 143--316.
  Lecture Notes in Math., Vol. 349. Springer, Berlin, 1973.

\bibitem[Gro90]{gross}
Benedict~H. Gross.
\newblock A tameness criterion for {G}alois representations associated to
  modular forms (mod {$p$}).
\newblock {\em Duke Math. J.}, 61(2):445--517, 1990.

\bibitem[Kat81]{k}
Nicholas~M. Katz.
\newblock Galois properties of torsion points on abelian varieties.
\newblock {\em Inventiones mathematicae}, 62:481--502, 1981.

\bibitem[KM85]{katzmaz}
Nicholas~M. Katz and Barry Mazur.
\newblock {\em Arithmetic moduli of elliptic curves}, volume 108 of {\em Annals
  of Mathematics Studies}.
\newblock Princeton University Press, Princeton, NJ, 1985.

\bibitem[Kra90]{kraus}
Alain Kraus.
\newblock Sur le d\'efaut de semi-stabilit\'e des courbes elliptiques \`a
  r\'eduction additive.
\newblock {\em Manuscripta Math.}, 69(4):353--385, 1990.

\bibitem[Lan76]{lang}
Serge Lang.
\newblock {\em Introduction to modular forms}.
\newblock Springer-Verlag, Berlin, 1976.

\bibitem[{LMF}13a]{lmfdb}
The {LMFDB Collaboration}.
\newblock The {L}-functions and {M}odular {F}orms {D}atabase.
\newblock \url{http://www.lmfdb.org}, 2013.
\newblock [Online; accessed 20 November 2013].

\bibitem[{LMF}13b]{lmfdb:ellQ}
The {LMFDB Collaboration}.
\newblock {\itshape {The L-functions and Modular Forms Database, {\em Home page
  of Elliptic Curves{$\ \mathbb{Q}$}}}}.
\newblock \mbox{\url{http://www.lmfdb.org/EllipticCurve/Q}}, 2013.
\newblock [Online; accessed 20 November 2013].

\bibitem[Maz77a]{maz}
Barry Mazur.
\newblock Modular curves and the {E}isenstein ideal.
\newblock {\em Inst. Hautes \'Etudes Sci. Publ. Math.}, (47):33--186 (1978),
  1977.

\bibitem[Maz77b]{maz1}
Barry Mazur.
\newblock Rational points on modular curves.
\newblock In Jean-Pierre Serre and DonBernard Zagier, editors, {\em Modular
  Functions of one Variable V}, volume 601 of {\em Lecture Notes in
  Mathematics}, pages 107--148. Springer Berlin Heidelberg, 1977.

\bibitem[Par05]{Parent}
Pierre J.~R. Parent.
\newblock Towards the triviality of {$X\sp +\sb 0(p\sp r)(\mathbb Q)$} for
  {$r>1$}.
\newblock {\em Compos. Math.}, 141(3):561--572, 2005.

\bibitem[Ser66]{serre66}
Jean-Pierre Serre.
\newblock Groupes de {L}ie {$l$}-adiques attach\'es aux courbes elliptiques.
\newblock In {\em Les {T}endances {G}\'eom. en {A}lg\'ebre et {T}h\'eorie des
  {N}ombres}, pages 239--256. \'Editions du Centre National de la Recherche
  Scientifique, Paris, 1966.

\bibitem[Ser72]{serre72}
Jean-Pierre Serre.
\newblock Propri\'et\'es galoisiennes des points d'ordre fini des courbes
  elliptiques.
\newblock {\em Inventiones mathematicae}, 15:259--331, 1972.

\bibitem[Sil09]{silv}
Joseph~H. Silverman.
\newblock {\em The arithmetic of elliptic curves}, volume 106 of {\em Graduate
  Texts in Mathematics}.
\newblock Springer, Dordrecht, second edition, 2009.

\bibitem[Sut12]{suth}
Andrew~V. Sutherland.
\newblock A local-global principle for rational isogenies of prime degree.
\newblock {\em J. Th\'eor. Nombres Bordeaux}, 24(2):475--485, 2012.

\end{thebibliography}

\begin{flushleft}
\begin{table}[h]
\footnotesize{\begin{tabular}{l}
Mathematics Institute, 
Zeeman Building\\
University of Warwick\\
Coventry CV4 7AL\\
United Kingdom
\end{tabular}}
\end{table}
\textit{\footnotesize{E-mail address: {\url{ s.anni@warwick.ac.uk}}}}
\end{flushleft}

\end{document}